\theoremstyle{plain}
\newtheorem{theorem}{Theorem}
\newtheorem{lemma}[theorem]{Lemma}
\newtheorem{proposition}[theorem]{Proposition}
\newtheorem{corollary}[theorem]{Corollary}
\newtheorem*{claim*}{Claim}
\theoremstyle{definition}
\newtheorem{definition}[theorem]{Definition}
\newtheorem{remark}[theorem]{Remark}
\newtheorem{question}[theorem]{Question}
\numberwithin{equation}{section}
\numberwithin{theorem}{section}
\newcommand{\fakeenv}{} %%% prints the emptystring
 \renewcommand{\fakeenv}{#2} %%% So now \fakeenv prints #2
 \theoremstyle{plain} 
 \newtheorem*{\fakeenv}{#1~\ref{#2}} %%% so now #2 is the name of a
\newcommand{\RR}{\mathbb{R}}
\newcommand{\PP}{\mathbb{P}}
\newcommand{\calG}{\mathcal{G}}
\newcommand{\calL}{\mathcal{L}}
\newcommand{\param}%
	{{\mathchoice{\mkern1mu\mbox{\raise2.2pt\hbox{$\centerdot$}}\mkern1mu}%
	{\mkern1mu\mbox{\raise2.2pt\hbox{$\centerdot$}}\mkern1mu}%
	{\mkern1.5mu\centerdot\mkern1.5mu}{\mkern1.5mu\centerdot\mkern1.5mu}}}
\DeclareMathOperator{\Curr}{Curr}
\newcommand{\PCurr}{\PP {\rm Curr}}
\DeclareMathOperator{\Isom}{Isom}
\DeclareMathOperator{\Mod}{Mod}
\DeclareMathOperator{\Teich}{Teich}
\DeclareMathOperator{\ML}{ML}
\DeclareMathOperator{\vol}{vol}
\DeclareMathOperator{\PL}{PL}
\DeclareMathOperator{\Homeo}{Homeo}
\def\strutdepth{\dp\strutbox}
\def \ss{\strut\vadjust{\kern-\strutdepth \sss}}
\def \sss{\vtop to \strutdepth{
\baselineskip\strutdepth\vss\llap{$\diamondsuit\;\;$}\null}}
\def\strutdepth{\dp\strutbox}
\def \sst{\strut\vadjust{\kern-\strutdepth \ssss}}
\def \ssss{\vtop to \strutdepth{
\baselineskip\strutdepth\vss\llap{$\spadesuit\;\;$}\null}}
\def\strutdepth{\dp\strutbox}
\def \ssh{\strut\vadjust{\kern-\strutdepth \sssh}}
\def \sssh{\vtop to \strutdepth{
\baselineskip\strutdepth\vss\llap{$\heartsuit\;\;$}\null}}
\begin{document}

%\renewcommand{\thefootnote}{\fnsymbol{footnote}} 
%\footnotetext{\emph{Key words and phrases:} Hyperbolic isometries, relative free factor graph, fully irreducible automorphisms} 
%\renewcommand{\thefootnote}{\arabic{footnote}} 

%%%%%%%%%%%%%%%%%%%%%%%%%%%%%%%%%%%%%%%%%%%%%%%%%%%%%%%%%%%%%%%%%%%%%%%%%%%%% 

\title[Length functions and applications to dynamics and counting]{Length functions on currents and applications to dynamics and counting}

\author[V. ~Erlandsson ]{Viveka Erlandsson}
\address{School of Mathematics\\
University of Bristol\\
Howard House, Queen's Avenue\\
Bristol BS8 1Sn, UK}
\email{\href{mailto:v.erlandsson@bristol.ac.uk}{v.erlandsson@bristol.ac.uk}}

\author[C.~Uyanik]{Caglar Uyanik}
\address{Department of Mathematics \\
Yale University\\
10 Hillhouse Avenue\\
New Haven, CT 06520, USA}
\email{\href{mailto:caglar.uyanik@yale.edu}{caglar.uyanik@yale.edu}}

%\thanks{ Both authors gratefully acknowledge support from U.S. National Science Foundation grants DMS 1107452, 1107263, 1107367 ``RNMS: GEometric structures And Representation varieties'' (the GEAR Network).}
%\begin{abstract}
%
%
%\end{abstract}

\maketitle
\tableofcontents

%%%%%%%%%%%%%%%%%%%%%%%%%%%%%%%%%%%%%%%%%%%%%%%%%%%%%%%%%%%%%%%%%%%%%%%%%%%%% 

\section{Introduction}

Geodesic currents are measure theoretic generalizations of closed curves on hyperbolic surfaces and they play an important role, among many other things, in the study of the geometry of Teichm\"uller space (see, for example, \cite{Bo86, Bo88}).  The set of all closed curves sits naturally as a subset of the space of currents, and various fundamental notions such as geometric intersection number and length of curves extend to this more general setting of currents.

The aim of this (mostly expository) article is twofold. We first explore a variety of length functions on the space of currents, and we survey recent work regarding applications of length functions to counting problems. Secondly, we use length functions to provide a proof of a folklore theorem which states that pseudo-Anosov homeomorphisms of closed hyperbolic surfaces act on the space of projective geodesic currents with uniform north-south dynamics, see Theorem \ref{NSD}.

More precisely, let $S$ be a closed, orientable, finite type surface of genus $g\geq2$ and denote the space of geodesic currents on $S$ by $\Curr(S)$. By a \emph{length function} on $\Curr(S)$ we mean a function that is homogeneous and positive (see Definition \ref{lengthfunction}). There are many ways to define a length of a closed curve on $S$: a Riemannian metric on $S$ naturally induces a notion of length, a generating set of $\pi_1(S)$ gives the notion of word length of a curve,  and given a fixed (filling) curve $\gamma$ one can consider a combinatorial length given by the curves intersection number with $\gamma$. We will see that all these notions of length give rise to \emph{continuous} length functions on $\Curr(S)$. 
% namely the $\rho$-length of a shortest representative in its homotopy class, or one could consider a combinatorial length given by its intersection number with a fixed curve, or ,from an algebraic point of view, its word length with respect to a finite generating set for $\pi_1(S)$

As a first example, in section \ref{intnumber}, we will see that Bonahon's intersection form, which is an extension of the geometric intersection number of curves to currents, induces a continuous length function on $\Curr(S)$. We also use this intersection form to prove the uniform north-south dynamics result mentioned above (see section \ref{NSDsection}). 

In section \ref{length} we explore other notions of length of curves that have continuous extensions to length functions on $\Curr(S)$. In section \ref{section:Liouville} we use \emph{Liouville currents} to extend the length of curves given by any (possibly singular) non-positively curved Riemannian metric on $S$ to a continuous length function on $\Curr(S)$, as well as the word length with respect to so-called simple generating sets of $\pi_1(S)$ (see Theorem \ref{nonpositive} and Corollary \ref{vivcor}). Next, we explore length functions with respect to any Riemannian metric on $S$ (respectively the word length with respect to an arbitrary generating set) and explain why the corresponding \emph{stable lengths} extend to continuous length functions on $\Curr(S)$, see Theorem \ref{stable}. 

In section \ref{counting}, we apply the results of section \ref{length} to problems regarding counting curves on surfaces. Mirzakhani \cite{Mir1, Mir2} proved that the asymptotic growth rate of the number of curves of bounded hyperbolic length, in each mapping class group orbit, is polynomial in the length (see Theorem \ref{Maryam} for the precise statement). We explain how to use continuous length functions on $\Curr(S)$ to generalize her result to other notions of length of curves, and show that the same asymptotic behavior holds for all lengths discussed above (see Theorem \ref{juanviveka} and Corollary \ref{EPStheorem}). These results appeared in \cite{ES, viv, EPS} and here we attempt to give a clear outline of the logic behind these proofs. 

\newpage
%\bigskip

\noindent \textbf{Acknowledgments:} We are grateful to David Constantine, Spencer Dowdall, Ilya Kapovich, Chris Leininger, Kasra Rafi, Juan Souto and Weixu Su for interesting conversations throughout this project. We thank Dave Futer for asking a question that led to Corollary \ref{futer}. The first author also thanks the School of Mathematics at Fudan University and Vanderbilt University for their hospitality, and gratefully acknowledges support from the NSF grant DMS-1500180 (A. Olshanskii and M. Sapir). Finally, we thank the anonymous referee for helpful comments and corrections. 

%%%%%%%%%%%%%%%%%%%%%%%%%%%%%%%%%%%%%%%%%%%%%%%

%%%%%%%%%%%%%%%%%%%%%%%%%%%%%%%%%%%%%%%%%%%%%%%

\section{Background}

\subsection{Curves on surfaces}

Throughout this article, we let $S$ be a closed, orientable, finite type surface of genus $g\geq 2$. By a \emph{curve} $\gamma$ on $S$ we mean a (free) homotopy class of an immersed, essential, closed curve. That is, the homotopy class of the image of an immersion of the unit circle $S^1\to S$, where the image is not homotopic to a point. We say the curve is \emph{simple} if the immersion is homotopic to an embedding. We identify a curve with its corresponding conjugacy class, denoted $[\gamma]$, in the fundamental group $\pi_1(S)$. Furthermore, we assume curves to be primitive, that is $\gamma\neq\eta^k$ for any $k>1$ and $\eta\in\pi_1(S)$. By a \emph{multicurve} we mean a union of finitely many weighted curves, that is 
$$\bigsqcup_{i=1}^{n} a_i\gamma_i$$
where $a_i>0$ and $\gamma_i$ is a curve for each $i$. We say the multicurve is \emph{integral} if $a_i\in\mathbb{Z}$ for all $i$, and that it is \emph{simple} if the curves $\gamma_i$ are simple and pairwise disjoint.
%\ve{I changed it back to $\mathbb{Z}$ since we use the notation $\ML_{\mathbb{Z}}$ to denote them, which is kind of standard. Since we already said they have to be $>0$ it's obvious they must be in $\mathbb{N}$}

%%%%%%%%%%%%%%%%%%%%%%%%%%%%%%%%%%%%%%%%%%%%%%%%%%%
%\newpage

\subsection{Teichm\"{u}ller space and the mapping class group}

A \emph{hyperbolic structure} on a surface $S$ is a collection of charts $\{(U_i,\psi_i)\}$ such that 
\begin{enumerate}
\item $\{U_{i}\}$ is an open cover of $S$,
\item the map $\psi_i:U_i\to\mathbb{H}^{2}$ is an orientation preserving homeomorphism onto its image for each $i$,
\item For each $i,j$ such that $U_i\cap U_j\neq\emptyset$ the restriction of $\psi_j\circ\psi_i^{-1}$ to each component of $U_i\cap U_j$ is an element of $\Isom^{+}(\mathbb{H}^2)$. 
\end{enumerate}

The surface $S$ together with a hyperbolic structure is called a \emph{hyperbolic surface}. Cartan--Hadamard theorem asserts that a closed hyperbolic surface is isometrically diffeomorphic to $\mathbb{H}^2/\Gamma$ where $\Gamma$ is a torsion free discrete subgroup of $\Isom^{+}(\mathbb{H}^{2})$. 

A \emph{marked hyperbolic surface} is a pair $(X,f)$ where 
\begin{enumerate}
\item $X = \mathbb{H}^{2}/\Gamma$ is a hyperbolic surface, and
\item $f : S\to X$ is an orientation-preserving homeomorphism.
\end{enumerate}

Given a marked hyperbolic surface $(X,f)$, we can pull back the hyperbolic structure on $X$ by $f$ to one on 
$S$. Conversely, given a hyperbolic structure on $S$, the identity map $id: S\to S$ makes $(S,id)$ into a marked hyperbolic surface.

The \emph{Teichm\"uller space} of $S$ is the set $\Teich(S)=\{(X,f)\}/\sim$ of equivalence classes of marked hyperbolic surfaces, where two hyperbolic surfaces $(X,f)$ and $(Y,g)$ are \emph{equivalent} if $g\circ f^{-1}$ is homotopic to an isometry from $X$ to $Y$.

The \emph{mapping class group} $\Mod(S)$ of $S$ is the group of isotopy classes of orientation-preserving homeomorphisms of $S$; in other words,
\[
\Mod(S)=\Homeo^{+}(S)/\Homeo_{0}(S)
\]
where $\Homeo_0(S)$ is the connected component of the identity in the orientation preserving homeomorphism group $\Homeo^+(S)$. 

The mapping class group $\Mod(S)$ acts on $\Teich(S)$ naturally by precomposing the marking map, i.e. for $\varphi\in\Mod(S)$, and $[(X,f)]\in\Teich(S)$ choose a lift $\Phi\in\Homeo^{+}(S)$ of $\varphi$ and define
\[
\varphi[(X,f)]=[(X, f\circ\Phi^{-1})].
\]

%%%%%%%%%%%%%%%%%%%%%%%%%%%%%%%%%%%%%%%%%%%%%%%%%%%

\subsection{Measured Laminations}\label{laminations}
A \emph{geodesic lamination} on a hyperbolic surface $S$ is a closed subset $\calL$ of $S$ that is a union of simple, pairwise disjoint, complete geodesics on $S$. The geodesics in $\calL$ are called the \emph{leaves} of the lamination. A \emph{transverse measure} $\lambda$ on $\calL$ is an assignment of a locally finite Borel (Radon) measure $\lambda_{|k}$ on each arc $k$ transverse to $\calL$ so that 
\begin{enumerate}
\item If $k'$ is a subarc of an arc $k$, then $\lambda_{|k'}$ is the restriction to $k'$ of $\lambda_{|k}$;
\item Transverse arcs which are transversely isotopic have the same measure. 
\end{enumerate}

A \emph{measured lamination} is a pair $(\calL,\lambda)$ where $\calL$ is a geodesic lamination and $\lambda$ is a transverse measure. In what follows, we will suppress  $\calL$ and write $\lambda$ for brevity. 
The set of measured laminations on $S$ is denoted by $\ML(S)$, and endowed with the weak-* topology: a sequence $\lambda_n\in\ML(S)$ converges to $\lambda\in\ML(S)$ if and only if 
\[
\int_{k} fd\lambda_n\longrightarrow\int_{k} fd\lambda
\]
for any compactly supported continuous function $f$ defined on a generic transverse arc $k$ on $S$.

An easy example of a measured geodesic lamination is given by a simple curve $\gamma$ on $S$, together with the transverse measure $\lambda_{\gamma}$: for each transverse arc $k$ the transverse measure is the Dirac measure $\lambda_{\gamma|k}$ which counts the number of intersections with $\gamma$, i.e. for any Borel subset $B$ of $k$, we have $\lambda_{\gamma|k}(B)=|B\cap\gamma|$. 
The set of measured geodesic laminations coming from \emph{weighted simple curves} $\lambda_{\gamma}$ 
is dense in $\ML(S)$, see \cite{PH}. We denote the subset of $\ML(S)$ coming from simple integral multicurves by $\ML_{\mathbb{Z}}(S)$. 

Note that $\mathbb{R}^{+}$  acts naturally on $\ML(S)$ by scaling the transverse measure. The space of \emph{projective measured laminations} is defined as the quotient 
\[
\mathbb{P}\ML(S)=\ML(S)/\mathbb{R}_{+}
\]
and the equivalence class of a measured geodesic lamination in $\PP\ML(S)$ is denoted by $[\lambda]$. We endow $\PP\ML(S)$ with the quotient topology.

The space $\ML(S)$ of measured laminations is homeomorphic to $\mathbb{R}^{6g-6}$ and has a $\Mod(S)$-invariant piecewise linear manifold structure (see, for example, \cite{PH}). This piecewise linear structure is given by train track coordinates. We refer the reader to \cite{ThurstonNotes} and \cite{PH} for a detailed discussion of train tracks and only recall the relevant notions for our purposes. 

%\cu{is this correct?}\ve{I changed it slightly}
Let $\tau$ be a smoothly embedded $1$-complex in $S$, i.e. an embedded complex whose edges are smoothly embedded arcs with well-defined tangent lines at the end-points. A \emph{complementary region} of $\tau$ is the metric completion of a connected component of $S\setminus\tau$. We say $\tau$ is a \emph{train track} on $S$ if in addition it satisfies the following properties:
\begin{enumerate}
\item at each vertex, called \emph{switch}, the tangent lines to all adjacent edges agree
\item at each vertex the set of adjacent edges can be divided into two sets according to the direction of the tangent line; we require each of these sets to be non-empty at every vertex
\item doubling each complementary region gives a surface with singular points having negative Euler characteristic $\chi=2-2g-p$ (where $g$ and $p$ represent the genus and the number of singular points, respectively).  
\end{enumerate}
A train track is called \emph{maximal} if the complementary regions to $\tau$ are all triangles.

% or once-punctured monogons. 
A simple closed curve, or more generally a measured lamination $(\calL,\lambda)$, is \emph{carried} by $\tau$ if there is a smooth map $g:S\to S$ such that 
\begin{enumerate}
\item $g:S\to S$ is isotopic to the identity,
\item the restriction of $g$ to $\calL$ is an immersion,\item$g(\calL)\subset \tau$.
\end{enumerate}

There is a finite collection of train tracks $\mathcal{T}=\{\tau_1, \tau_2, \ldots, \tau_n\}$ such that for all $\lambda\in\ML(S)$ there is a $\tau_i\in\mathcal{T}$ which $\lambda$ is carried by. The set of measured laminations carried by a train track is full dimensional if and only if the train track is maximal. Now, each maximal train track $\tau$ determines a cone $C(\tau)$ in $\mathbb{R}^{6g-6}$, given by the solutions to the so called \emph{switch equations},
and we have a homeomorphism between all laminations carried by $\tau$ and $C(\tau)$. Moreover, the set of integer points in $C(\tau)$ is in one to one correspondence with the simple integral multicurves (i.e. elements of $\ML_{\mathbb{Z}}(S)$) carried by $\tau$.

%%%%%%%%%%%%%%%%%%%%%%%%%%%%%%%%%%%%%%%%%%%%%%%%%%%%%%

\subsection{Geodesic Currents}

Consider a hyperbolic metric $\rho$ on $S$ and let $\tilde{S}$ be the universal cover equipped with the pullback metric $\tilde{\rho}$. Let $\mathcal{G}(\tilde{\rho})$ denote the set of complete geodesics in $\tilde{S}$. Let $S^1_{\infty}$ denote the boundary at infinity of $\tilde{S}$. Note that since $\tilde{\rho}$ is hyperbolic, $\tilde{S}$ is isometric to $\mathbb{H}^2$ and its boundary is homeomorphic to the unit circle $S^1$. Each geodesic is uniquely determined by its pair of endpoints on $S^1_{\infty}$. Hence we can identify the set of geodesics with the \emph{double boundary} 
$$\mathcal{G}(\tilde{S})= \left(S^1_{\infty}\times S^1_{\infty}\right)\setminus\Delta)/(x,y)\sim(y,x)$$
where $\Delta$ denotes the diagonal. That is, $\mathcal{G}(\tilde{S})$ consists of unordered pair of distinct boundary points, and we refer to it as the \emph{space of geodesics of $\tilde{S}$}. Note that $\mathcal{G}(\tilde{S})$ is independent of the metric $\rho$. Indeed, if $\rho'$ is another geodesic metric on $S$, then the universal cover $\tilde{S}$ equipped with the pullback metric $\tilde{\rho}'$ is quasi-isometric to $\mathbb{H}^2$ and this quasi-isometry extends to a homeomorphism of the boundaries at infinity (see \cite{AL} for the details). Hence $\mathcal{G}(\tilde{S})$ is well-defined without a reference to a metric.  
 
The fundamental group $\pi_1(S)$ acts naturally on $\tilde{S}$ by deck transformations, and this action extends continuously to $S^1_{\infty}$ and $\mathcal{G}(\tilde{S})$. For any (geodesic) metric $\rho$ the map
$$\partial_{\rho}: \mathcal{G}(\tilde{\rho})\to \mathcal{G}(\tilde{S})$$
that maps each geodesic to its pair of endpoints is continuous, surjective and $\pi_1(S)$-invariant, and a homeomorphism when $\rho$ is negatively curved. 

A \emph{geodesic current} on $S$ is a locally finite Borel measure on $\mathcal{G}(\tilde{S})$ which is invariant under the action of $\pi_1(S)$. We denote the set of all geodesic currents on $S$ by $\Curr(S)$ and endow it with the weak-* topology: A sequence $\mu_{n}\in\Curr(S)$ of currents converge to $\mu\in\Curr(S)$ if and only if 
\[
\int fd\mu_{n}\longrightarrow \int fd\mu
\]
for all continuous, compactly supported functions $f:\calG(\tilde{S})\to\RR$.  

As a first example of a geodesic current, consider the preimage under the covering map in $\tilde{S}$ of any closed curve $\gamma$ on $S$, which is a collection of complete geodesics in $\tilde{S}$. This defines a discrete subset of $\mathcal{G}(\tilde{S})$ which is invariant under the action of $\pi_{1}(S)$. The Dirac (counting) measure associated to this set on $\mathcal{G}(\tilde{S})$ gives a geodesic current on $S$.% which is denoted by $\mu_\gamma$. 

The map from the set of curves on $S$ to $\Curr(S)$ that sends each curve to its corresponding geodesic current, as above, is injective. Hence, we view the set of curves on $S$ as a subset of $\Curr(S)$. In fact, Bonahon showed that the set of all \emph{weighted curves} is dense in $\Curr(S)$ \cite{Bo86}. We identify a curve $\gamma$ with the current it defines, and by abuse of notation we denote both by $\gamma$.
%\bigskip

Another important subset of geodesic currents is given by measured laminations. Let $(\calL,\lambda)$ be a measured lamination and consider its preimage $\tilde{\calL}$ in $\tilde{S}$ which is a collection of pairwise disjoint complete geodesics. The lift $\tilde{\calL}$ is a discrete subset of $\calG(\tilde{S})$ which is $\pi_1(S)$-invariant. Hence the associated Dirac measure on $\calL$ defines a geodesic current on $S$. Moreover this measure agrees with the transverse measure $\lambda$, see \cite{AL} for the details. Hence we view $\ML(S)$ as a subset of $\Curr(S)$ as well.  

A current $\nu\in\Curr(S)$ is called \emph{filling} if every complete geodesic in $\tilde{S}$ transversely intersects a geodesic in the support of $\nu$ in $\calG(\tilde{S})$. Note that this definition agrees with the classical notion of \emph{filling curves}: a curve $\gamma$ defines a filling current if and only if $\gamma$ is filling as a curve, i.e. $S\setminus\gamma$ is a union of topological disks. 

%\ve{About the filling definition: If we do this on the double boundary $\calG(\tilde{S})$ maybe we need to say "link" instead of intersect? Or at least explain what we mean}\cu{I guess I never just think of them as pair of points, but what if we say $\calG(\tilde{\rho})$}\ve{Maybe just leave it? We kind of want to define it without a refernce to a metric, and I think it's clear what we mean...}

\subsection{Nielsen--Thurston classification}

Thurston defined a $\Mod(S)$-equivariant compactification of the Teichm\"uller space by the space of projective measured laminations and using the action of $\Mod(S)$ on $\overline{\Teich(S)}=\Teich(S)\cup\PP\ML(S)$ showed:

\begin{theorem}[Nielsen-Thurston classification]\cite{Th, FLP} Each $\varphi\in \Mod(S)$ is either periodic, reducible or pseudo-Anosov. Furthermore, pseudo-Anosov mapping classes are neither periodic nor reducible. 
\end{theorem}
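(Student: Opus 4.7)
The plan is to run the classical argument based on Thurston's compactification $\overline{\Teich(S)} = \Teich(S) \cup \PP\ML(S)$ mentioned just before the theorem statement. Two facts I would quote without proof are that $\overline{\Teich(S)}$ is homeomorphic to a closed ball of dimension $6g-6$, and that the $\Mod(S)$-action on $\Teich(S)$ extends continuously to this closed ball. Given $\varphi\in\Mod(S)$, Brouwer's fixed point theorem then produces some $x\in\overline{\Teich(S)}$ with $\varphi(x)=x$, and the argument proceeds by case analysis on the location of $x$.

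If $x\in\Teich(S)$, I would invoke the fact that $\Mod(S)$ acts on $\Teich(S)$ properly discontinuously with finite point stabilizers: the stabilizer of $[(X,f)]$ injects into the isometry group $\Isom^+(X)$, which is finite for a closed hyperbolic surface of genus $g\geq 2$. Hence $\varphi$ has finite order, i.e.\ is periodic. If instead $x=[\lambda]\in\PP\ML(S)$, then $\varphi\cdot\lambda = k\lambda$ for some $k>0$. When the support of $\lambda$ consists of finitely many disjoint simple closed curves, $\varphi$ permutes these curves, some power $\varphi^n$ fixes each of them setwise, and so $\varphi$ is reducible. Otherwise $\lambda$ has a genuinely laminar (non-multicurve) component and the remaining task is to produce a second projective fixed point $[\mu]$ with $\varphi\cdot\mu=(1/k)\mu$ such that $\lambda$ and $\mu$ jointly fill $S$; the pair $(\lambda,\mu)$ then assembles into transverse invariant measured foliations with reciprocal stretch factors, which is precisely the data of a pseudo-Anosov representative of $\varphi$.

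To produce $[\mu]$ I would apply Brouwer once more, this time to $\varphi^{-1}$ acting on an appropriate closed subset of $\PP\ML(S)$ carved out using Bonahon's intersection form from Section \ref{intnumber}, which is continuous on projective currents. A compactness and intersection-number argument then yields the transversality $i(\lambda,\mu)>0$ together with $k\neq 1$, and also rules out a closed-leaf component of $\lambda$ in this non-reducible case, so that the three categories form a genuine trichotomy. The \emph{furthermore} statement comes essentially for free from the structure of pseudo-Anosov maps: the stretch factor $k>1$ forces infinite order, ruling out periodic; and the invariant stable/unstable laminations are filling and uniquely ergodic, so no simple multicurve can be fixed up to isotopy by any power of $\varphi$, ruling out reducible.

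The main obstacle will be the construction and analysis of the second fixed point $[\mu]$ with reciprocal eigenvalue that fills jointly with $[\lambda]$. Brouwer cheaply yields one projective fixed point on $\overline{\Teich(S)}$, but upgrading this single fixed point to a transverse filling pair with reciprocal stretch factors — and simultaneously showing that in the non-multicurve regime no $\varphi$-invariant simple multicurve can exist — is where the essential dynamical content lives, and is precisely the phenomenon that the north-south dynamics of Section \ref{NSDsection} encodes.
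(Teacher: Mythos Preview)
The paper does not give its own proof of this statement: the Nielsen--Thurston classification is stated as background with citations to \cite{Th, FLP}, and no argument is supplied. So there is no proof in the paper against which to compare your proposal.

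That said, your outline follows the classical route taken in the cited sources. A couple of points where the sketch would need tightening if you were to write it out in full: the dichotomy in the boundary case is not ``multicurve support versus non-multicurve support'' but rather whether the fixed lamination $\lambda$ is \emph{filling} (equivalently, arational) or not; in the non-filling case one extracts an invariant multicurve from the boundary of the minimal supporting subsurface, which is how reducibility arises even when $\lambda$ itself has no closed leaves. And as you yourself flag, the production of the transverse partner $\mu$ with reciprocal stretch factor is the genuine work --- in the standard references this is done via Markov partitions or the singular flat structure rather than by a second application of Brouwer, and your suggestion to carve out a $\varphi^{-1}$-invariant closed subset of $\PP\ML(S)$ using the intersection form would need to be made precise to actually yield a fixed point with the required properties.
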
 

Here $\varphi\in \Mod(S)$ is called \emph{periodic} if there exist a $k\ge0$ such that $\varphi^{k}$ is the identity. The map $\varphi$ is called \emph{reducible} if there is a collection $\mathcal{C}$ of disjoint simple curves on $S$ and a representative $\varphi'$ of $\varphi$ such that $\varphi'(\mathcal{C})$ is isotopic to $\mathcal{C}$. Finally, $\varphi\in \Mod(S)$ is called \emph{pseudo-Anosov} if there exists a filling pair of transverse, measured laminations $(\calL^{+},\lambda_{+})$ and $(\calL^{-},\lambda_{-})$, a number $\alpha>1$ called the \emph{stretch factor}, and a representative homeomorphism $\varphi'$ of $\varphi$ such that 
\[
\varphi'(\calL^{+},\lambda_{+})=(\calL^{+},\alpha\lambda_{+})
\]
and 
\[
\varphi'(\calL^{-},\lambda_{-})=(\calL^{-},\frac{1}{\alpha}\lambda_{-}).
\]
The measured laminations $(\calL^{+},\lambda_{+})$ and $(\calL^{-},\lambda_{-})$ are called the \emph{stable lamination} and the \emph{unstable lamination} respectively. We will suppress the $\calL$ and write $\lambda_{+}$ and $\lambda_{-}$  respectively.

%%%%%%%%%%%%%%%%%%%%%%%%%%%%%%%%%%%%%%%%%%%%%%%%%%%%%%%%

\subsection{Length functions and the intersection number}\label{intnumber}

\begin{definition}\label{lengthfunction} A \emph{length function} on the space of geodesic currents is a map $\ell :\Curr(S)\to\RR$ which is homogeneous and positive, i.e.
\[
\ell(a\mu)=a\ell(\mu)
\]
for any $a>0$ and $\mu\in\Curr(S)$, $\ell(\mu)\ge0$ for all $\mu\in\Curr(S)$ and  $\ell(\mu)=0$ iff $\mu=0$. 

\end{definition}

We say that a map $\ell$ on the set of curves on $S$ is a length function if it is a positive function, i.e. $\ell(\gamma)>0$ for all curves $\gamma$ on $S$. Note that this agrees with the definition above, when viewing the set of curves as a subset of the space of geodesic currents.

Given two curves $\gamma, \eta$ on $S$, their \emph{(geometric) intersection number}, denoted $i(\gamma, \eta)$, is defined as the minimum number of transverse intersections between transverse representatives of the homotopy classes of $\gamma$ and $\eta$.  That is
$$i(\gamma, \eta) = \min\left\{\vert\gamma'\pitchfork\eta'\vert\,\vert\, \gamma'\sim\gamma, \eta'\sim\eta \right\}$$
where $\sim$ denotes homotopic. 

We note that $i(\gamma, \gamma)=0$ if and only if $\gamma$ is a simple curve. Moreover, an equivalent description of the intersection number of two distinct curves $\gamma$ and $\eta$ is the following. Let $\rho$ be a hyperbolic metric on $S$ and $\tilde{S}$ be the universal cover equipped with the pullback metric $\tilde{\rho}$.  Let $\tilde{\gamma}$ be a geodesic representative of a lift of $\gamma$ to $\tilde{S}$. Consider the set of lifts of $\eta$ and take their geodesic representatives.  Let $x$ be a point on $\tilde{\gamma}$ that does not lie on a geodesic representative of any lift of $\eta$,  and consider the bounded segment $\delta_{\gamma}$ of $\tilde{\gamma}$ between $x$ and $\gamma(x)$. Then the intersection number $i(\gamma, \eta)$ is exactly the same as the number of the lifts of $\eta$ that intersect (necessarily transversely) $\delta_{\gamma}$. This description of the intersection number will be helpful below. 

Viewing the set of curves as a subspace of the space of geodesic currents, it is natural to ask if the intersection number extends, in a nice way, to $\Curr(S)$. Indeed, Bonahon \cite{Bo88} showed that there is a unique continuous extension of the intersection number to the space of geodesic currents:

\begin{theorem}\cite[Proposition 4.5]{Bo88}
There is a unique continuous, symmetric, bilinear form 
$$i(\cdot,\cdot): \Curr(S)\times\Curr(S)\to\mathbb{R}_{\geq0}$$
such that 
$i(\gamma, \eta)$ agrees with the geometric intersection number whenever $\gamma, \eta$ are curves on $S$. 
\end{theorem}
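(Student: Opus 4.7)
My plan is to construct the extension by pushing down a natural product measure from the space of transversely intersecting pairs of geodesics in $\tilde S$, with uniqueness coming for free from Bonahon's density theorem. Since weighted curves are dense in $\Curr(S)$, any continuous bilinear form on $\Curr(S)\times\Curr(S)$ is determined by its values on pairs of curves, so the extension --- if it exists --- must be unique.

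For existence, let $DG \subset \calG(\tilde S)\times\calG(\tilde S)$ denote the open subset consisting of pairs $(g_1,g_2)$ of distinct geodesics that cross transversely in $\tilde S$; this is well-defined because any two distinct geodesics in $\mathbb{H}^2$ meet at most once. The diagonal $\pi_1(S)$-action on $DG$ is free and properly discontinuous, and a Borel fundamental domain is given by
\[
DG_F = \bigl\{(g_1,g_2)\in DG : g_1\cap g_2 \in F\bigr\},
\]
where $F\subset\tilde S$ is a fundamental domain with compact closure for the deck action. Crucially, $DG_F$ is relatively compact in $\calG(\tilde S)\times\calG(\tilde S)$: it is contained in $\calG_{\bar F}\times\calG_{\bar F}$, where $\calG_{\bar F}=\{g\in\calG(\tilde S): g\cap\bar F\neq\emptyset\}$ is compact in $\calG(\tilde S)$ (the pair of endpoints of any such geodesic lies in a compact subset of $S^1_\infty\times S^1_\infty$ bounded away from the diagonal). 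Given $\mu,\nu\in\Curr(S)$ I would then set
\[
i(\mu,\nu) = (\mu\times\nu)(DG_F),
\]
which is finite because $\mu\times\nu$ is locally finite on $\calG(\tilde S)\times\calG(\tilde S)$ while $DG_F$ sits inside a compact set, and independent of the choice of $F$ by the $\pi_1(S)$-invariance of $\mu$ and $\nu$.

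Symmetry and bilinearity are immediate from the corresponding properties of the product measure. To match the geometric intersection number on curves, I would use the description recalled in the excerpt: if $\gamma,\eta$ are curves, both are sums of Dirac masses on the $\pi_1(S)$-orbits of their lifts, so $(\gamma\times\eta)(DG_F)$ counts pairs $(\tilde\gamma,\tilde\eta)$ of lifts crossing inside $F$. Using $\pi_1(S)$-invariance, this count equals the number of lifts of $\eta$ meeting a single fundamental segment $\delta_\gamma$ along a chosen lift of $\gamma$, which is exactly $i(\gamma,\eta)$.

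The main obstacle is continuity. If $\mu_n\to\mu$ and $\nu_n\to\nu$ weak-$*$, then the product measures converge weak-$*$ on $\calG(\tilde S)\times\calG(\tilde S)$, but the indicator of $DG_F$ is not continuous --- its boundary includes pairs whose intersection point lies on $\partial F$, plus the excluded diagonal. The standard fix would be to choose $F$ so that this boundary has $(\mu\times\nu)$-measure zero (possible because only countably many translates of $F$ could fail), and then approximate $\mathbf{1}_{DG_F}$ from above and below by compactly supported continuous functions on $\calG(\tilde S)\times\calG(\tilde S)$ to pass to the limit. Ensuring that this approximation can be carried out uniformly enough to give joint continuity in both variables simultaneously is where the bulk of the technical care lies.
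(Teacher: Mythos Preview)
Your construction is essentially the same as the one the paper presents (following Bonahon and \cite{AL}): your set $DG$ is exactly the paper's $\mathcal{G}^{\pitchfork}(\tilde S)$, and integrating $\mu\times\nu$ over your fundamental domain $DG_F$ is equivalent to computing the total mass of the locally pushed-forward measure on the quotient $\mathcal{G}^{\pitchfork}(S)=\mathcal{G}^{\pitchfork}(\tilde S)/\pi_1(S)$. The paper does not itself carry out the continuity argument --- it simply cites Bonahon --- so your identification of continuity as the genuine technical point, and your outline via Portmanteau-type approximation of $\mathbf{1}_{DG_F}$, is an accurate sketch of where the work lies.
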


Here we give the definition of this intersection form and explain how it induces length functions on $\Curr(S)$. For the definition we follow the exposition presented in \cite{AL} and refer to that paper for more details. Let $\mathcal{G}^{\pitchfork}(\tilde{S})\subset\mathcal{G}(\tilde{S})\times\mathcal{G}(\tilde{S})$ be the subset defined by
$$\mathcal{G}^{\pitchfork}(\tilde{S}) = \left\{\left(\{x,y\}, \{z,w\}\right)\in\mathcal{G}(\tilde{S})\times\mathcal{G}(\tilde{S})\setminus\Delta\,\vert\{x,y\}, \{c,d\}\mbox{ link}\right\}$$
where $\Delta$ represents the diagonal and we say that $\{x,y\}$ and $\{z,w\}$ \emph{link} if $x$ and $y$ belong to different components of $S^1_{\infty}\setminus\{z,w\}$. Equivalently, $\mathcal{G}^{\pitchfork}(\tilde{S})$ consists of pairs of geodesics in $\tilde{S}$ that intersect transversely. The action of $\pi_1(S)$ on $\tilde{S}$ induces a free and properly discontinuous action on $\mathcal{G}^{\pitchfork}(\tilde{S})$ and hence the quotient map 
$$\mathcal{G}^{\pitchfork}(\tilde{S})\to\mathcal{G}^{\pitchfork}(\tilde{S})/\pi_1(S)$$
is a covering map. We define
$$\mathcal{G}^{\pitchfork}(S)=\mathcal{G}^{\pitchfork}(\tilde{S})/\pi_1(S).$$
Now, let $\mu, \nu\in\Curr(S)$. Then $\mu\times\nu$ is a product measure on $\mathcal{G}(\tilde{S})\times\mathcal{G}(\tilde{S})$ and hence on $\mathcal{G}^{\pitchfork}(\tilde{S})$. This descends to a measure on $\mathcal{G}^{\pitchfork}(S)$ by locally pushing forward $\mu\times\nu$ through the covering map, and the intersection of $\mu$ and $\nu$ is defined as the $\mu\times\nu$-mass on $\mathcal{G}(S)$. That is  
$$i(\mu, \nu)=\int_{\mathcal{G}^{\pitchfork}(S)} d\mu\times d\nu.$$

Let $\gamma$ be a curve and identify it with the current it defines. Let $\mu\in\Curr(S)$. Then $i(\gamma, \mu)$ can be defined as follows. As above, choose a hyperbolic metric $\rho$ on $S$ and consider the universal cover $\tilde{S}$ equipped with the pullback metric $\tilde{\rho}$. Take a lift of $\gamma$ and let $\tilde{\gamma}$ be its geodesic representative. Let $x$ be a point on $\tilde{\gamma}$ and consider the geodesic segment $\eta_{\gamma}$ from $x$ to $\gamma(x)$. Let $\mathcal{G}^{\pitchfork}(\eta_{\gamma})$ denote the set of geodesics that transversely intersect  $\eta_{\gamma}$ and $\partial_{\rho}\mathcal{G}^{\pitchfork}(\eta_{\gamma})$ the subset of $\mathcal{G}(\tilde{S})$ obtained by identifying each geodesic in $\mathcal{G}^{\pitchfork}(\eta_{\gamma})$ with its pair of endpoints on $S^1_{\infty}$. Then 
$$i(\gamma,\mu)=\mu\left(\partial_{\rho}\mathcal{G}^{\pitchfork}(\eta_{\gamma})\right),$$
see Figure \ref{intersectionfigure}. In particular, we see that when $\mu$ is also (the current associated to) a curve on $S$, then the intersection form agrees with the geometric intersection number of curves on $S$. 

\begin{figure}[h!]\label{intersectionfigure}
\labellist
\small\hair 2pt
 \pinlabel {\tiny $x$} [ ] at 245 590
 \pinlabel {$\bullet$} [ ] at 253 599
 \pinlabel {$\bullet$} [ ] at 302 605
 \pinlabel {\tiny$\tilde{\gamma}x$} [ ] at 312 596
 \pinlabel {\textbf{I}} [ ] at 280 490
 \pinlabel {\textbf{J}} [ ] at 270 738
% \pinlabel {$b_2$} [ ] at 545 340
% \pinlabel {$c_{s-1}$} [ ] at 130 270
\endlabellist
\centering
\includegraphics[scale=0.65]{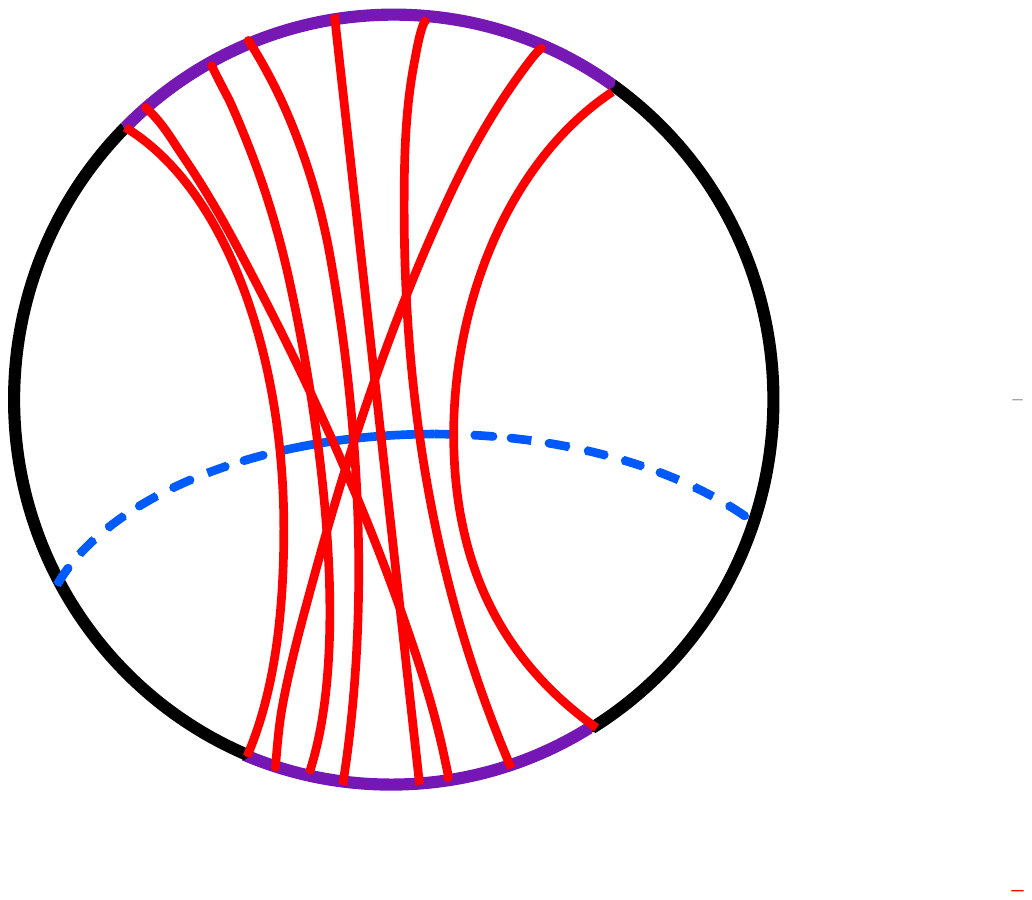}
\caption{Intersection of a curve $\gamma$ with a current $\mu$: $i(\gamma,\mu)=\mu(I\times J)$. Here $\eta_{\gamma}$ is represented by the solid blue segment and $\partial_{\rho}\mathcal{G}^{\pitchfork}(\eta_{\gamma})=I\times J$.}
\label{intersectionfigure}
\end{figure}

We record some useful facts about the intersection form:
\begin{enumerate}
\item If $\nu\in\Curr(S)$ is filling, then $i(\nu,\mu)\neq0$ for all $\mu\in\Curr(S)\setminus\{0\}$\label{step nonzero}.
\item The intersection form is invariant under $\Mod(S)$. That is, if $g\in\Mod(S)$ then $i(\mu, \nu) = i(g(\mu), g(\nu))$ for any $\mu, \nu\in\Curr(S)$.
\item $i(\mu, \mu)=0$ if and only if $\mu\in\ML(S)$.
\item If $\nu\in\Curr(S)$ is filling and $L>0$, then the set\label{step compact}
$$\{\mu\in\Curr(S)\,\vert\,i(\mu,\nu)\leq L\}$$
is a compact set.
\end{enumerate}
The first two statements follow from the definition of the intersection form, while the last two are results by Bonahon, see \cite{Bo86}.
 
We define the space of \emph{projective geodesic currents} to be 
$$\PCurr(S) = \left(\Curr(S)\setminus\{0\}\right)/\mathbb{R}_{+}.$$
It follows from (\ref{step compact}) above that $\PCurr(S)$ is a compact space. 

Next we show how to obtain continuous length functions on $\Curr(S)$ from the intersection form. Fix a filling current $\nu\in\Curr(S)$. Define
$$\ell_{\nu}(\mu)=i(\nu,\mu)$$
for all $\mu\in\Curr(S)$. By the linearity and continuity of the intersection form, $\ell_{\nu}$ is continuous and homogenous on $\Curr(S)$. Furthermore, since $\nu$ is filling, it follows from (\ref{step nonzero}) above that $\ell_{\nu}$ is positive.  Hence the function above defines a continuous length function 
$$\ell_{\nu}:\Curr(S)\to\mathbb{R}.$$
Moreover, this is the unique continuous extension of the length function on the set of curves defined by
$$\ell_{\nu}(\gamma)=i(\nu, \gamma)$$
for all curves $\gamma$ on $S$. In section \ref{length} we will see that many other notions of lengths of curves have unique continuous extensions to length functions on $\Curr(S)$. 

We end this section by noting that the intersection form can also be defined for geodesic currents on surfaces with boundary, and we refer the reader to \cite{DLR} 
%and \cite{UyaNSD}
for the definitions. For simplicity of the exposition we assume throughout that $S$ is a closed surface although the results presented here have generalizations that also hold for the case of compact surfaces. %In the presence of cusps however, the intersection form (when defined) is no longer continuous unless we restrict our attention to currents with compact support (see \cite{?}). 

%%%%%%%%%%%%%%%%%%%%%%%%%%%%%%%%%%%%%%%%%%%%%%%%%%%%%%%%%%%%%%

%%%%%%%%%%%%%%%%%%%%%%%%%%%%%%%%%%%%%%%%%%%
%%%%%%%%%%%%%%%%%%%%%%%%%%%%%%%%%%%%%%%%%%%

\section{Length functions on space of currents}\label{length}

%Recall that by a curve on $S$ we mean a free homotopy class of a primitive closed curve, and we view the set of curves on $S$ as a subset of $\Curr(S)$. As before, a length function on $\Curr(S)$ is a positive, homogenous function, and a length function on the set of curves is any positive function. 

In section \ref{intnumber} we saw that the geometric intersection number on the set of curves extend continuously to a bilinear form on pairs of currents and hence, fixing a filling curve (or current) $\nu$, the length function 
$$\ell_{\nu}(\gamma)=i(\nu,\gamma)$$
on the set of curves, extends \emph{continuously} to a length function on currents, defined by
$$\ell_{\nu}(\mu)=i(\nu, \mu)$$
for all $\mu\in\Curr(S)$. There are many ways to define the length of a curve on a surface and it is natural to ask which other notions of length extends continuously to the space of currents. More concretely, let $\rho$ be a (possibly singular) Riemannian metric on $S$. Then $\rho$ naturally induces a length function $\ell_{\rho}(\cdot)$ on the set of curves where the length of a curve $\gamma$ is defined to be the $\rho$-length of a shortest representative of $\gamma$. In the case when $\rho$ is a negatively curved metric, this is the length of the unique geodesic representative in the homotopy class of $\gamma$. Another natural length function on the set of curves is given by first identifying a curve on $S$ with a conjugacy class in the fundamental group $\pi_1(S)$ and, for a fixed a generating set of $\pi_1(S)$, defining the length of a curve to be the minimal number of generators needed to represent the corresponding conjugacy class. In general, given a geodesic metric space $(X, d)$ on which $\pi_1(S)$ acts discretely and cocompactly by isometries, one can ask whether the translation length of a curve $\gamma$
\begin{equation}\label{translation length}
\ell_X(\gamma)=\inf_{x\in X}d(x,\gamma(x))
\end{equation}
extends continuously to a length function on the space of currents. Note that when $X$ is the universal cover of $S$ equipped with a Riemannian metric, or $X$ is the Cayley graph with respect to a generating set of $\pi_1(S)$, this length agrees with the notions described above. 

We will see that in many cases such a continuous extension exists. In particular, in section \ref{section:Liouville} below, we explain why it exists for any (possibly singular) non-positively curved Riemannian metric on $S$, through the use of \emph{Liouville currents} and their relation to the intersection form on $\Curr(S)$. Similar arguments show that the word length with respect to certain (well-chosen) generating sets extends continuously to a length function on the space of currents. 

Alas, such a continuous extension does not always exist. However, as we will see in section \ref{section:stable}, for any length function $\ell_X$ on curves as above, the \emph{stable length} function defined by
\[
sl_X(\gamma)=\lim_{n\to\infty}\frac{1}{n}\ell_X(\gamma^n)
\]
always extends continuously to a length function on $\Curr(S)$.  

Finally, in section \ref{section:glorious}, we will see that the two approaches of defining an extension (using intersection with a special current, and considering the stable length) are intimately related.

%%%%%%%%%%%%%%%%%%%%%%%%%%%%%%%%%%%%%%%%%%%

\subsection{Length of currents through Liouville currents}\label{section:Liouville}

In this section we explain how the length function of curves with respect to any (possibly singular) non-positively curved Riemannian metric on $S$ can be extended continuously to a length function on the space of currents on $S$. 

First, fix a hyperbolic metric $\rho$ on $S$. The hyperbolic length of a homotopy class of a closed curve $\gamma$ is defined as the length of the $\rho$-geodesic representative, and denoted by $\ell_{\rho}(\gamma)$. There exists a current associated with $\rho$, called its \emph{Liouville current} and denoted by $L_{\rho}$, whose intersection form with curves on $S$ determines the length function induced by $\rho$, that is:
\begin{equation}\label{e:Liouville}
i(\gamma,L_{\rho})=\ell_{\rho}(\gamma)
\end{equation}
for all curves $\gamma$ on $S$. 

Here we describe two equivalent definitions of the Liouville current and we refer the reader to \cite{AL, Bo88, HerPa, Otal} for more details.  
%Recall that a current is a $\pi_1(S)$-invariant Radon measure on 
%$$\mathcal{G}(\tilde{S})=\left(S^1_{\infty}\times S^1_{\infty}\setminus\Delta\right)/(x,y)\sim(y,x).$$
First we define the \emph{Liouville measure} $L$ on the hyperbolic plane $\mathbb{H}^2$. Let $\mathcal{G}(\mathbb{H}^2)$ denote the set of all bi-infinite geodesics in $\mathbb{H}^2$, which we identify with their endpoints on the unit circle $S^1$. Let $[a,b]$ and $[c,d]$ be two non-empty disjoint intervals on $S^1$. Define 
\begin{equation}\label{cross}
L\left([a,b]\times[c,d]\right)=\left\lvert\log\left\lvert\frac{(a-c)(b-d)}{(a-d)(b-c)}\right\rvert\right\rvert
\end{equation}
whenever $a, b, c, d$ are four distinct points, and set $L\left([a,b]\times[c,d]\right)=0$ if one of the intervals is a singleton. The map $L$ extends uniquely to a Radon measure on $\mathcal{G}(\mathbb{H}^2)$ (see \cite{Bo88}) and is invariant under the action of $\pi_1(S)$ since the right-hand side in \eqref{cross} is invariant under this action. In the disk model of $\mathbb{H}^2$ we have, using local coordinates $(e^{i\alpha},e^{i\beta})$,
$$L_{\rho}=\frac{d\alpha d\beta}{\vert e^{i\alpha}-e^{i\beta}\vert^2}$$
where $d\alpha d\beta$ is the Lebesgue measure defined by the Euclidean metric on $S^1$, and in particular $L$ is absolutely continuous with respect to the Lebesgue measure (see, for example, \cite{AL}). Now, given a hyperbolic metric $\rho$ on $S$ the universal cover $\tilde{S}$ with the pull-back metric $\tilde{\rho}$ is isometric to $\mathbb{H}^2$ and the boundary $S^1_{\infty}$ is homeomorphic to $S^1$. We define $L_{\rho}$, the Liouville current with respect to $\rho$, to be the pull-back of $L$ through this homeomorphism. 

Alternatively, one can define the Liouville current in the following way. Let $\tilde\eta$ be a $\tilde{\rho}$-geodesic arc in $\tilde{S}$, parametrized at unit-speed by $\tilde{\eta}: (-a, a)\to\tilde{\eta}(t)$. Let $\mathcal{G}(\tilde\eta)$ denote the set of all $\tilde\rho$-geodesics in $\tilde{S}$ that intersect $\tilde\eta$ transversely. Note that each geodesic in $\mathcal{G}(\tilde\eta)$ is uniquely determined by its point of intersection $\tilde{\eta}(t)$ with $\tilde{\eta}$ and its angle of intersection (chosen in an arbitrary but consistent way). This gives rise to a homeomorphism 
$$h_{\eta}:(-a,a)\times(0,\pi)\to \mathcal{G}(\tilde{\eta}).$$
Consider the measure on $(-a,a)\times(0,\pi)$ defined by
$$ds=\frac{1}{2}\sin(\theta)d\theta dt.$$
We push forward this measure through $h_{\eta}$ to obtain a measure on $\mathcal{G}(\tilde{\eta})$. Lastly, we further push the measure forward through the homeomorphism 
$$\partial_{\rho}: \mathcal{{G}}(\tilde{\rho})\to\mathcal{G}(\tilde{S})$$
which maps each geodesic in $\mathcal{G}(\tilde{\eta})$ to its endpoints. The resulting measure is a Radon measure on $\mathcal{G}(\tilde{S})$. Furthermore, since $\pi_1(S)$ acts by isometries on $\tilde{S}$, the measure is invariant under its action. This measure is the Liouville measure $L_{\rho}$ and agrees with the previous definition.  

While the closed formula in the first definition makes it easier to state, the construction involved in the latter makes \eqref{e:Liouville} more natural to see. Indeed, integrating $ds$ over $\mathcal{G}(\tilde{\eta})$ for a unit-speed parametrized geodesic arc $\tilde{\eta}$ gives exactly the length of $\tilde\eta$.  

The existence of Liouville currents for hyperbolic metrics allows us to embed the Teichm\"{u}ller space of $S$ into the space of geodesics currents, as shown by Bonahon \cite{Bo88}. More precisely, let $(X,f)$ be a point in the Teichm\"uller space, and $\ell_{X}$ and $L_{X}$ be the corresponding length function on curves and the Liouville current, respectively. Then, we have: 

\begin{theorem}\cite{Bo88}\label{Bonahon length}
The map 
$$(X,f)\mapsto L_{X}$$
defines an embedding $\Teich(S)\hookrightarrow\Curr(S)$ satisfying
$$i(\gamma,L_{X})=\ell_{X}(\gamma)$$
for all curves $\gamma$ on $S$. 
\end{theorem}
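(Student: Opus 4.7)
The plan is to prove the intersection formula $i(\gamma, L_X) = \ell_X(\gamma)$ first, and then derive the embedding statement as a direct consequence. Once the formula is in hand, if $L_X = L_Y$ as currents then $\ell_X(\gamma) = \ell_Y(\gamma)$ for every simple closed curve $\gamma$; by marked length spectrum rigidity of closed hyperbolic surfaces (concretely, the lengths of a well-chosen finite collection of $9g-9$ curves pin down a point in $\Teich(S)$ via Fenchel--Nielsen coordinates), this forces $X = Y$ in $\Teich(S)$. Continuity of $(X,f) \mapsto L_X$ follows from the fact that the explicit density $\frac{d\alpha\, d\beta}{|e^{i\alpha} - e^{i\beta}|^2}$ and the boundary identification $S^1_\infty \cong S^1$ both depend continuously on the hyperbolic structure; continuity of the inverse on the image follows because $L_{X_n} \to L_X$ together with continuity of the intersection form implies $\ell_{X_n}(\gamma) \to \ell_X(\gamma)$ pointwise, which is enough to force $X_n \to X$ in $\Teich(S)$.

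For the intersection formula itself, I would use the second, geometric construction of $L_X$. Pick a lift $\tilde{\gamma}$ of $\gamma$ with a base point $x$, and let $\eta_\gamma$ be the geodesic segment from $x$ to $\gamma \cdot x$. By the description of the intersection pairing with a curve recalled in Section~\ref{intnumber},
\[
i(\gamma, L_X) \;=\; L_X\bigl(\partial_\rho \mathcal{G}^\pitchfork(\eta_\gamma)\bigr).
\]
The homeomorphism $h_{\eta_\gamma}\colon (-a, a) \times (0, \pi) \to \mathcal{G}(\tilde{\eta}_\gamma)$, with $2a = \ell_{\tilde{\rho}}(\eta_\gamma)$, identifies $\mathcal{G}^\pitchfork(\eta_\gamma)$ with the full rectangle, and by construction $L_X$ is the push-forward of $\tfrac{1}{2}\sin\theta\, d\theta\, dt$ through $\partial_\rho \circ h_{\eta_\gamma}$. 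A direct computation then gives
\[
L_X\bigl(\partial_\rho \mathcal{G}^\pitchfork(\eta_\gamma)\bigr) \;=\; \int_{-a}^{a}\!\!\int_0^\pi \tfrac{1}{2}\sin\theta \, d\theta\, dt \;=\; 2a \;=\; \ell_X(\gamma),
\]
since $\eta_\gamma$ is a fundamental domain on the axis $\tilde{\gamma}$ for the $\langle \gamma \rangle$-action and its $\tilde{\rho}$-length equals the translation length of $\gamma$, which is $\ell_X(\gamma)$.

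The main obstacle is book-keeping: making sure the construction is independent of auxiliary choices and that the two definitions of $L_X$ really agree. Concretely, I would need to check: (i) the choice of base point $x$ on $\tilde{\gamma}$ does not affect the answer, because sliding $x$ along $\tilde{\gamma}$ shifts $\mathcal{G}^\pitchfork(\eta_\gamma)$ only by a power of $\gamma$ and $L_X$ is $\pi_1(S)$-invariant; (ii) the genericity requirement on $x$ (avoiding lifts that pass through the endpoints) is vacuous here, since $L_X$ is absolutely continuous with respect to Lebesgue measure on $S^1_\infty \times S^1_\infty$ and hence charges no countable set of endpoints; and (iii) the equivalence between the closed-form definition \eqref{cross} and the geometric $\tfrac{1}{2}\sin\theta\, d\theta\, dt$ description, which reduces to checking $\mathrm{PSL}(2,\RR)$-invariance of both expressions and matching them on a single reference geodesic arc; this last step is the most technically delicate piece and is worked out in detail in, e.g., \cite{Bo88, Otal}.
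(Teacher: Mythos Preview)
Your proposal is correct. Note, however, that the paper does not supply its own proof of this theorem: it is stated with the citation \cite{Bo88} and left at that. The only argument the paper offers is the one-line heuristic immediately preceding the theorem, namely that integrating $ds=\tfrac{1}{2}\sin\theta\,d\theta\,dt$ over all geodesics crossing a unit-speed geodesic arc $\tilde\eta$ yields exactly the length of $\tilde\eta$. Your computation of $i(\gamma,L_X)$ via the rectangle $(-a,a)\times(0,\pi)$ is precisely a fleshing-out of that sentence, so on the intersection formula you and the paper are in complete agreement.

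Where you go further is in the embedding claim itself. The paper says nothing about why $(X,f)\mapsto L_X$ is injective, continuous, or a homeomorphism onto its image; it simply attributes the result to Bonahon. Your sketch---injectivity via the simple marked length spectrum (the $9g-9$ theorem), continuity from the explicit density, and continuity of the inverse from pointwise convergence of length functions---is the standard route and is essentially how Bonahon argues in \cite{Bo88}. The book-keeping points (i)--(iii) you flag are the right ones to worry about, and your handling of them is sound; in particular, invoking absolute continuity of $L_X$ to dismiss the genericity requirement on the base point is clean.
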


Note that, since the intersection form is continuous and bilinear on $\Curr(S)\times\Curr(S)$, as discussed in section \ref{intnumber}, the hyperbolic length function has a continuous extension to a length function on $\Curr(S)$ by setting $$\ell_{\rho}(\mu)=i(\mu,L_{\rho})$$
for all $\mu\in\Curr(S)$. The positivity of this function follows from the fact that the Liouville current is filling and hence $i(\mu, L_{\rho})=0$ if and only if $\mu$ is the $0$-current. Moreover, this extension is unique due to the following theorem by Otal \cite{Otal}.

\begin{theorem}\cite{Otal}
Suppose $\mu_1, \mu_2\in\Curr(S)$. If $i(\mu_1,\gamma)=i(\mu_2, \gamma)$ for all curves $\gamma$ on $S$, then $\mu_1=\mu_2$. 
\end{theorem}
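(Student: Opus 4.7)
The plan is to translate the hypothesis into information about the $\mu_i$-measures of a rich family of Borel sets in $\mathcal{G}(\tilde{S})$, and then invoke uniqueness of Radon measures. First I would fix a hyperbolic metric $\rho$ on $S$ and use the description of the intersection number from Figure~\ref{intersectionfigure}: for each $g \in \pi_1(S)$ representing a curve $\gamma$, and each generic basepoint $x$ on the geodesic axis $\tilde\gamma$, the set $B(x,\gamma) := \partial_\rho\,\mathcal{G}^{\pitchfork}(\eta_\gamma) \subset \mathcal{G}(\tilde S)$ is an open ``rectangle'' (a finite union of boxes $I \times J$ with $I,J \subset S^1_\infty$ disjoint arcs), and
\[
\mu_i(B(x,\gamma)) \;=\; i(\gamma,\mu_i).
\]
The hypothesis thus gives $\mu_1(B(x,\gamma)) = \mu_2(B(x,\gamma))$ for every such $(x,\gamma)$. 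By $\pi_1(S)$-invariance of $\mu_1$ and $\mu_2$, one can replace $x$ by any point of the $\pi_1(S)$-orbit $\pi_1(S)\cdot x$ along $\tilde\gamma$, and conjugate $g$ freely, without changing the identity.

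The key step would then be to show that the sets $B(x,\gamma)$, their $\pi_1(S)$-translates, and finite Boolean combinations thereof, generate the Borel $\sigma$-algebra of $\mathcal{G}(\tilde S)$. Since the basic open sets of $\mathcal{G}(\tilde S)$ are rectangles $I\times J$ with $I,J$ disjoint open arcs in $S^1_\infty$, outer regularity of Radon measures reduces the problem to recovering $\mu_i(I \times J)$ from the boxes $B(x,\gamma)$. Using that attracting–repelling fixed-point pairs of hyperbolic elements of $\pi_1(S)$ are dense in $(S^1_\infty \times S^1_\infty)\setminus\Delta$, one can approximate any such rectangle from above and below by Boolean combinations of translates of $B(x,\gamma)$'s, and conclude $\mu_1(I \times J) = \mu_2(I \times J)$ for every rectangle, hence $\mu_1 = \mu_2$.

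The main obstacle is this approximation: the box $B(x,\gamma)$ arises from a fundamental segment $\eta_\gamma = [x,gx]$ on a closed geodesic, whose length is bounded below by the systole of $(S,\rho)$, so no single $B(x,\gamma)$ is arbitrarily thin. The remedy I would use is a subdivision argument. If $g = g_1 g_2$ in $\pi_1(S)$ and the axes of $g, g_1, g_2$ fellow-travel coherently, then the fundamental segment of $g$ decomposes, up to bounded Hausdorff error, into the segment of $g_1$ followed by the translate under $g_1$ of the segment of $g_2$; correspondingly
\[
B(x,\gamma_g) \;\approx\; B(x,\gamma_{g_1}) \,\sqcup\, g_1\cdot B(x,\gamma_{g_2}),
\]
and $\pi_1(S)$-invariance lets us isolate the individual summands by inclusion–exclusion against other test curves. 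Iterating this with longer and longer words over a fixed generating set produces boxes of arbitrarily small diameter covering any prescribed geodesic arc in $\tilde S$, and in the limit one recovers the measures of arbitrary rectangles $I\times J$. Standard measure theory then closes the argument.
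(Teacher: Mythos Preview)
The paper does not prove this statement; it is quoted as a result of Otal and used only to justify uniqueness of the extension of $\ell_\rho$ to $\Curr(S)$. So there is no in-paper proof to compare your attempt against.

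As for the proposal itself, the overall strategy---show that the values $\mu_i\big(B(x,\gamma)\big)$ determine the Radon measure---is reasonable in spirit, but there are two genuine gaps.

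First, the set $B(x,\gamma)=\partial_\rho\mathcal G^{\pitchfork}(\eta_\gamma)$ is \emph{not} a box $I\times J$, nor a finite union of such. A geodesic lies in $B(x,\gamma)$ exactly when it separates $x$ from $gx$ in $\tilde S$; in boundary coordinates this is the region cut out by two cross-ratio level curves, not a product of arcs. (The caption of the relevant figure in the paper is schematic and should not be read literally.) So approximating an arbitrary rectangle $I\times J$ by Boolean combinations of the $B(x,\gamma)$ is aimed at the wrong shape from the outset.

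Second, and more seriously, your subdivision step does not produce arbitrarily thin test sets. When you factor $g=g_1g_2$, the axes of $g$, $g_1$, and $g_2$ are three \emph{distinct} geodesics; a fundamental segment on the axis of $g$ is only uniformly Hausdorff-close to, never equal to, a concatenation of segments on the other two axes. That discrepancy is bounded \emph{below} by a constant depending on $(S,\rho)$, and iterating the decomposition does not shrink it. The phrase ``isolate the individual summands by inclusion--exclusion against other test curves'' hides exactly the missing content: you would have to cancel a fixed geometric error using only quantities of the form $i(\gamma,\mu_i)$, and you give no mechanism for this. What one actually needs---and what Otal supplies---is either the Anosov closing lemma, to shadow an \emph{arbitrary} geodesic arc by a fundamental segment of a closed geodesic and then pass to a limit that kills the error, or equivalently a passage to flow-invariant measures on $T^1S$ together with density of periodic orbits. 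Neither ingredient appears in your outline.
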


More generally, let $\rho$ be any metric on $S$, and let $\ell_{\rho}(\gamma)$ denote the length of a shortest representative in the homotopy class of a curve $\gamma$. We say $L_{\rho}$ is a \emph{Liouville current for $\rho$} if equation \eqref{e:Liouville} holds, that is
$$i(\gamma, L_{\rho})=\ell_{\rho}(\gamma)$$ 
for all curves $\gamma$ on $S$. Note that when such a current exists it must be unique and is necessarily a filling current, by the same theorem by Otal.  

As explained above, a Liouville current exists for any hyperbolic metric on $S$. Otal \cite{Otal} showed the existence of a Liouville current for any (variable) negatively curved metric on $S$. By work of Duchin-Leininger-Rafi \cite{DLR} and Bankovic-Leininger \cite{BL} such a current also exists for any non-positively curved Euclidean cone metric on $S$. Finally, Constantine \cite{Constantine} extended these results to any non-positively curved (singular) Riemannian metric, giving the Liouville current associated to any such metric (in fact, also for the larger class of so-called \emph{no conjugate points cone metrics}, see \cite{Constantine} for the definition). We record a consequence of this sequence of results here: 

\begin{theorem}\cite[Proposition 4.4]{Constantine}\label{nonpositive}
Let $\rho$ be any (possibly singular) non-positively curved Riemannian metric on $S$ and let $\ell_{\rho}(\gamma)$ denote the $\rho$-length of a shortest representative in the homotopy class of $\gamma$. Then the length function $\ell_{\rho}$ on the set of curves extends continuously to a length function 
$$\ell_{\rho}: \Curr(S)\to\mathbb{R}.$$
Moreover, this extension is unique.   
\end{theorem}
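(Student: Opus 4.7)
The plan is to reduce the theorem to the existence of a Liouville current $L_\rho$ for the metric $\rho$, after which everything else follows from the continuity and bilinearity of Bonahon's intersection form, exactly as in the hyperbolic case treated earlier in this section. The main technical input, which I would invoke as a black box, is Constantine's construction: for any (possibly singular) non-positively curved Riemannian metric $\rho$ on $S$, there exists a filling current $L_\rho \in \Curr(S)$ satisfying
$$i(\gamma, L_\rho) = \ell_\rho(\gamma) \quad \text{for every curve } \gamma \text{ on } S.$$
Morally $L_\rho$ is built by mimicking the second construction of the hyperbolic Liouville current sketched above: one uses the angle-length measure $\tfrac{1}{2}\sin(\theta)\,d\theta\,dt$ along geodesic arcs in $\tilde{S}$ and pushes it forward to $\calG(\tilde{S})$ via the endpoint map $\partial_\rho$. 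The delicate point is that the endpoint map is only a homeomorphism for negatively curved metrics, so for non-positively curved cone-type singularities one must take care to define $L_\rho$ on flats and near cone points; this is the content of the work of Duchin--Leininger--Rafi, Bankovic--Leininger, and ultimately Constantine.

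With $L_\rho$ in hand, define the candidate extension
$$\ell_\rho \from \Curr(S) \to \RR, \qquad \ell_\rho(\mu) := i(\mu, L_\rho).$$
Continuity and homogeneity of $\ell_\rho$ are immediate from the continuity and bilinearity of the intersection form. Positivity is precisely property (\ref{step nonzero}) of the intersection form: since $L_\rho$ is filling, $i(\mu, L_\rho) > 0$ for every nonzero $\mu \in \Curr(S)$, and $\ell_\rho(0) = 0$ is trivial. That this extension restricts to the original length function on the set of curves is just the defining property of the Liouville current. Hence $\ell_\rho$ is a continuous length function on $\Curr(S)$ extending $\ell_\rho$ on curves.

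For uniqueness, suppose $\ell' \from \Curr(S) \to \RR$ is any continuous length function agreeing with $\ell_\rho$ on curves. By homogeneity, $\ell'$ and our $\ell_\rho$ agree on all weighted curves; since weighted curves are dense in $\Curr(S)$ by Bonahon's theorem and both functions are continuous, they agree on all of $\Curr(S)$. (Equivalently, one could appeal directly to Otal's theorem, which says a current is determined by its intersection numbers with curves, to rule out competing Liouville-type currents at the source.) The only real obstacle in the whole argument is the existence statement for $L_\rho$ in the singular non-positively curved setting; once that exists and is filling, the extension, continuity, positivity, and uniqueness are formal consequences of the properties of $i(\cdot,\cdot)$ already established.
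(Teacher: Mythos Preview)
Your proposal is correct and matches the paper's approach exactly: the paper does not give a formal proof of this theorem but presents it as a consequence of the existence of a Liouville current $L_\rho$ (due to Constantine, building on Otal, Duchin--Leininger--Rafi, and Bankovic--Leininger), after which the extension $\ell_\rho(\mu)=i(\mu,L_\rho)$ is continuous and positive by the properties of the intersection form, and uniqueness follows from Otal's theorem (equivalently, density of weighted curves). You have spelled out precisely the argument the surrounding text in the paper leaves implicit.
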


We note that Liouville currents also exist in other settings. Notably, Martone--Zhang proved the existence of such currents in the context of a large class of representations, including Hitchin and maximal ones, see \cite{MZ} for details. In another direction, Sasaki proved the existence of a bilinear intersection functional on the space of subsets currents on surfaces, and proved Liouville type equalities, \cite{KN, SaS}. We refer interested reader to these papers as they are beyond the scope of this paper. %\cu{ok?}

From a more algebraic viewpoint, one can consider the word metric on $\pi_1(S)$ with respect to a fixed generating set: We choose a base point $p$ on $S$ and identify the elements of $\pi_1(S)$ with loops based at $p$.  Since this group is finitely generated, we choose a finite, symmetric generating set $G=\{g_1^{\pm1},g_2^{\pm1},\ldots,g_n^{\pm1}\}$. Given a conjugacy class $[\gamma]$ (or, equivalently, a homotopy class of a curve $\gamma$) we define the word length of the conjugacy class $[\gamma]$ with respect to $G$ to be 
$$\ell_{G}([\gamma]) = \min\left\{\vert k_1\vert+\vert k_2\vert+\cdots\vert k_m\vert\,\vert\, g_{i_1}^{k_1}g_{i_2}^{k_2}\cdots g_{i_m}^{k_m}\in [\gamma]\right\}.$$ 

We say a generating set $G$ is \emph{simple} if the loops $g_i$ in $G$ are simple and pairwise disjoint except at the base point $p$ (see Figure \ref{simplegenerator} for an example). Note that there are many such generating sets, including any one vertex triangulation of $S$ or the standard generating set for a genus $g$ surface $\{a_1,b_1,a_2,b_2,\ldots a_g,b_g\}$ with the relation $[a_1,b_1]\cdots[a_g,b_g]=1$.

\begin{figure}[h!]
\labellist
\small\hair 2pt
 \pinlabel {$\alpha$} [ ] at 150 550
 \pinlabel {$\beta$} [ ] at 360 550
 \pinlabel {$\eta$} [ ] at 237 560
 \pinlabel {$\delta$} [ ] at 215 460
 \pinlabel {$\gamma$} [ ] at 280 460
% \pinlabel {$b_k$} [ ] at 270 520
% \pinlabel {$b_2$} [ ] at 545 340
% \pinlabel {$c_{s-1}$} [ ] at 130 270
\endlabellist
\centering
\includegraphics[scale=0.50]{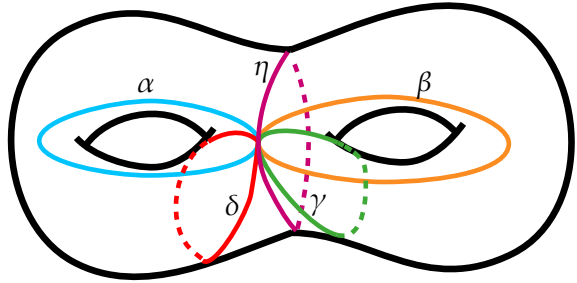}
\caption{A genus $2$ surface $S$ with a simple (non-minimal) generating set $G=\{\alpha^{\pm1}, \beta^{\pm1}, \gamma^{\pm1}, \delta^{\pm1}, \eta^{\pm1}\}$.}
\label{simplegenerator}
\end{figure}

%\cu{Isn't $\eta$ redundant in the picture ?I think, it is just the commutator $\alpha\delta\alpha^{-1}\delta^{-1}$} \ve{Depends on what redundant means. It is not a minimal generating set, but it is a simple generating set. Most generating sets will not be minimal. It gives a very different word metric depending on if it is included or not. So this is an example of a simple generating set which is not minimal}

In \cite{viv} it is shown that, given a simple generating set $G$, there exists a collection of curves $\nu=\nu(G)$, depending only on the generating set, such that the word length of a curve is given exactly by its geometric intersection number with this curve: 

\begin{theorem}\cite[Theorem 1.2]{viv}
Let $G$ be a simple generating set for $\pi_1(S)$. Then there exists a collection of curves $\nu=\nu(G)$ on $S$ such that 
$$\ell_{G}(\gamma) = i(\nu, \gamma)$$
for all curves $\gamma$ in $S$. Moreover, $\nu$ is unique with this property. 
\end{theorem}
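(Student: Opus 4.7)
The plan is to proceed in three steps: first, construct an explicit multicurve $\nu=\nu(G)$ from the combinatorics of $G$; second, verify the identity $\ell_G(\gamma)=i(\nu,\gamma)$ on the set of curves by two matching inequalities; third, appeal to the rigidity theorem of Otal quoted just after Theorem~\ref{Bonahon length} for uniqueness.

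For the construction, I would exploit the CW structure on $S$ coming from $G$. Since the generators $g_1,\ldots,g_n$ are simple loops pairwise disjoint away from the basepoint $p$, the graph $X=g_1\cup\cdots\cup g_n$ has a single $0$-cell $p$, exactly $n$ $1$-cells, and a finite collection of polygonal $2$-cells whose boundary words are relators in $\pi_1(S)$. Fix a small disk $D$ around $p$; the $2n$ half-edges of $X$ meet $\partial D$ in $2n$ points with a canonical cyclic order determined by the embedding. I would then define $\nu$ by replacing $X\cap D$ with a system of disjoint arcs in $D$ pairing these $2n$ points in the manner dictated by the corner structure of the surrounding $2$-cells at $p$, while leaving $X$ unchanged outside $D$; the result is a disjoint collection of simple closed curves on $S$, which I take to be $\nu(G)$.

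For the two inequalities, to prove $i(\nu,\gamma)\leq\ell_G(\gamma)$ I would fix a minimal-length word $g_{i_1}^{k_1}\cdots g_{i_m}^{k_m}$ representing the conjugacy class of $\gamma$, realize it as a loop at $p$, and push each visit to $p$ through the local resolution inside $D$ that defines $\nu$. By the choice of resolution, each generator-traversal contributes exactly one transverse intersection with $\nu$, while the detours between consecutive generators contribute none, producing a representative of $[\gamma]$ with $\sum_j|k_j|=\ell_G(\gamma)$ transverse crossings, whence $i(\nu,\gamma)\leq\ell_G(\gamma)$. For the reverse inequality, I would take a representative $\gamma'$ in minimal position with $\nu$ and use the fact that $S\setminus\nu$ is a close modification of $S\setminus X$ to homotope each subarc of $\gamma'$ between consecutive $\nu$-crossings until it runs close to a single generator-edge; reading off the corresponding generator at each crossing yields a word representation of $[\gamma]$ of length at most $|\gamma'\pitchfork\nu|=i(\nu,\gamma)$, so $\ell_G(\gamma)\leq i(\nu,\gamma)$.

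Uniqueness follows at once from Otal's rigidity theorem: if $\nu'$ is any other weighted multicurve with $i(\nu',\gamma)=\ell_G(\gamma)$ for every curve $\gamma$, then $i(\nu,\gamma)=i(\nu',\gamma)$ for all curves $\gamma$, which forces $\nu=\nu'$ as currents and hence as weighted multicurves. The hard part will be the construction of $\nu$ together with the matching crossing-count in the upper bound: the pairing at $p$ must be arranged so that a minimal-length word becomes a loop with exactly one $\nu$-crossing per generator-letter and no surplus contribution from the detour near $p$. Simplicity of $G$ is used essentially here, both in giving the $2n$ half-edges at $p$ a coherent cyclic order and in ensuring that each complementary $2$-cell is a polygonal disk whose boundary spells out a well-defined cyclic word in the generators, which is what makes both the clean resolution of $X$ at $p$ and the arc-pushing argument in the lower bound feasible.
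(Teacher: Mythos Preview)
The paper does not give a proof of this theorem; it is quoted from \cite{viv} and used as a black box, so there is no argument in the present paper to compare your outline against.

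Independently of that, your proposed construction of $\nu$ cannot succeed as stated. You smooth the one-vertex graph $X=g_1\cup\cdots\cup g_n$ at $p$ by a system of \emph{disjoint} arcs in a disk $D$, so that by design ``the result is a disjoint collection of simple closed curves on $S$'', i.e.\ a simple multicurve. But no essential simple multicurve can satisfy the identity: if $c$ is any essential component of such a $\nu$ then $i(\nu,c)=0$ (the components are simple and pairwise disjoint), whereas $\ell_G(c)>0$. Your $\nu$ necessarily has an essential component, since outside $D$ it coincides with $X$ and hence traverses each generator edge exactly once; its total homology class is therefore $\sum_i \pm[g_i]\neq 0$ in $H_1(S)$. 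So whichever non-crossing pairing at $p$ you choose, the resulting $\nu$ already fails $\ell_G(\gamma)=i(\nu,\gamma)$ on one of its own components.

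More conceptually, since $\ell_G$ is strictly positive on every essential curve, any $\nu$ with $i(\nu,\gamma)=\ell_G(\gamma)$ for all $\gamma$ must be a \emph{filling} current (property~(\ref{step nonzero}) in Section~\ref{intnumber}), which a simple multicurve never is. The curves comprising $\nu(G)$ in \cite{viv} must therefore intersect each other or themselves; the construction there is not a smoothing of $X$. Your lower-bound heuristic (reading a word from the $\nu$-crossings of a representative in minimal position) and your uniqueness step via Otal's theorem are both sound in spirit, and the latter is indeed how uniqueness is obtained; the gap is entirely in the construction of $\nu$ and the matching upper bound.
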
 

By viewing the set of curves as a subset of $\Curr(S)$, if $G$ is a simple generating set for $\pi_1(S)$, then the above results says that there exists a (unique) Liouville current associated to the corresponding word metric. In particular, it follows that the word length extends continuously to the space of currents: 

\begin{corollary}\cite[Corollary 1.3]{viv}\label{vivcor}
Let $G$ be a simple generating set for $\pi_1(S)$. Then the word length with respect to $G$ on the set of curves extends continuously to a length function 
$$\ell_{G}: \Curr(S)\to\mathbb{R}.$$
Moreover, this extension is unique.  
\end{corollary}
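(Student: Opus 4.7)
The plan is to deduce Corollary~\ref{vivcor} directly from the preceding Theorem~1.2 of \cite{viv}, by identifying the collection of curves $\nu=\nu(G)$ with the geodesic current it defines and then invoking the continuity and bilinearity of Bonahon's intersection form.

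Concretely, I would define
$$\ell_G(\mu) := i(\nu,\mu) \quad\text{for all } \mu\in\Curr(S).$$
Continuity of this map is immediate from continuity of the intersection form on $\Curr(S)\times\Curr(S)$ (with $\nu$ fixed in the first slot), and homogeneity follows from bilinearity. That this formula recovers the word length on curves is exactly the content of Theorem~1.2 of \cite{viv}. So after checking positivity we will have produced a length function in the sense of Definition~\ref{lengthfunction} that extends $\ell_G$.

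For positivity I would argue that $\nu$ is a filling current. Since $G$ is a generating set, every nontrivial primitive conjugacy class in $\pi_1(S)$ has word length at least $1$; hence $i(\nu,\gamma)=\ell_G(\gamma)\geq 1$ for every essential closed curve $\gamma$, and in particular for every essential simple closed curve. If $\nu$ were not filling, some component of $S\setminus\nu$ would contain an essential simple closed curve disjoint from $\nu$, forcing $i(\nu,\gamma)=0$ — a contradiction. Hence $\nu$ is filling, and property~(1) of the intersection form recorded in section~\ref{intnumber} yields $i(\nu,\mu)\neq 0$ for every $\mu\in\Curr(S)\setminus\{0\}$, which is precisely positivity.

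Uniqueness is a formal consequence of density: by \cite{Bo86}, weighted curves are dense in $\Curr(S)$. Any two continuous length functions on $\Curr(S)$ restricting to $\ell_G$ on curves agree, by homogeneity, on every weighted curve, and therefore by continuity on all of $\Curr(S)$. The only nontrivial step in the whole argument is verifying that $\nu$ is filling, which is handled by the uniform lower bound $\ell_G\geq 1$ on nontrivial curves; everything else is a direct consequence of facts already established in section~\ref{intnumber}.
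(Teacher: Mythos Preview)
Your proposal is correct and follows essentially the same approach as the paper: the paper does not give a standalone proof of the corollary but simply observes that Theorem~1.2 of \cite{viv} exhibits $\nu=\nu(G)$ as a Liouville current for the word metric, whence the extension $\ell_G(\mu)=i(\nu,\mu)$ exists by the same mechanism discussed earlier in section~\ref{section:Liouville}. Your argument fills in the details (filling via $\ell_G\geq 1$, uniqueness via density of weighted curves) that the paper leaves implicit; the only cosmetic difference is that the paper attributes uniqueness to Otal's theorem rather than directly to density, but your density argument is the more direct route to uniqueness of a continuous extension.
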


%%%%%%%%%%%%%%%%%%%%%%%%%%%%%%%%%%%%%%%%%%%%%

\subsection{Stable length of currents}\label{section:stable}

There are many notions of lengths not covered by the Liouville currents explained above. Two such occasions are the length of a curve with respect to a Riemannian metric which attains positive curvature values at places, and the word length with respect to a non-simple generating set. In fact, in these settings such currents do not necessarily exist. For instance, if we consider the word metric with respect to a non-simple generating set then we observe that the length function \emph{cannot} extend continuously to a length function on the space of geodesic currents. To see this, consider the case where $S$ is the once-punctured torus and let $a$, $b$ be the standard generators for the free group $\pi_1(S)$. Then, the word length with respect to the generating set $G=\{a^{\pm1}, b^{\pm1}, a^{\pm2}\}$, does not extend to a continuous homogeneous function on $\Curr(S)$. Indeed, the sequence of currents $\left(\frac{1}{2n}a^{2n}b\right)$ converges to the current $a$ as $n\to\infty$ and hence if such a function $\ell_{G}$ existed, continuity would imply that 
$$\ell_{G}\left(\frac{1}{2n}a^{2n}b\right)\to\ell_{G}(a)=1$$ 
while, on the other hand, homogeneity would imply 
$$\ell_{G}\left(\frac{1}{2n}a^{2n}b\right)=\frac{1}{2n}\ell_G(a^{2n}b) = \frac{n+1}{2n}\to\frac{1}{2}$$
as $n\to\infty$, a contradiction. 

However, as shown in \cite {EPS}, if we consider the \emph{stable length} of curves instead, which we describe below, this length function always extends continuously to the space of geodesic currents. 

Let $X$ be any geodesic metric space on which $\pi_1(S)$ acts discretely and cocompactly by isometries. For a conjugacy class $[\gamma]$ in $\pi_1(S)$ (or, equivalently, a curve $\gamma$ on $S$), define its translation length $\ell_X(\gamma)$ with respect to $X$ as in \eqref{translation length}. Then the \emph{stable length} of $[\gamma]$ is defined to be
\[
sl_X(\gamma)=\lim_{n\to\infty}\frac{1}{n}\ell_X(\gamma^n)=\lim_{n\to\infty}\frac{1}{n}\inf_{x\in X}d(x,\gamma^{n}(x)).
\] 
Again, this definition is independent of the choice of the representative in the conjugacy class. In \cite{EPS} it is shown that, with $X$ as above, this notion of length always extends continuously to $\Curr(S)$:

\begin{theorem}\cite[Theorem 1.5]{EPS}\label{stable}
Let X be a geodesic metric space on which $\pi_1(S)$ acts discretely and cocompactly by isometries. Then the stable length function $sl_X$ on the set of curves extends continuously to a length function 
$$sl_X:\Curr(S)\to\mathbb{R}.$$
Moreover, this extension is unique. 
\end{theorem}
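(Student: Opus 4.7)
Uniqueness is quickly disposed of: positive rational multiples of curves are dense in $\Curr(S)$, and by homogeneity any continuous extension of $sl_X$ is determined on this dense set by its values on curves. Thus I focus on existence, with the plan of exploiting a uniform comparison between $sl_X$ and a length function that is already known to extend continuously, then promoting an approximation-by-multicurves definition to a continuous extension.

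Fix an auxiliary hyperbolic metric $\rho$ on $S$ with Liouville current $L_\rho$, so that $i(L_\rho,\cdot) = \ell_\rho(\cdot)$ extends continuously to $\Curr(S)$ by Theorem~\ref{nonpositive}. The Milnor--Svarc lemma produces a $\pi_1(S)$-equivariant $(K,C)$-quasi-isometry $\Phi\colon (\tilde S,\tilde\rho) \to X$. Evaluating $\Phi$ at a point on the $\tilde\rho$-axis of $\gamma$, iterating, and passing to $n\to\infty$ yields the two-sided comparison
\[
\tfrac{1}{K}\,\ell_\rho(\gamma) \;\le\; sl_X(\gamma) \;\le\; K\,\ell_\rho(\gamma)
\]
for every curve $\gamma$; for the lower bound one uses that in the Gromov-hyperbolic space $X$ the translation length of a loxodromic element is realized, up to bounded additive error, on the $\Phi$-image of the $\tilde\rho$-axis. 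Since the slice $\{\mu\in\Curr(S):i(\mu,L_\rho)=1\}$ is compact by property~(\ref{step compact}) of Bonahon's intersection form, any continuous homogeneous extension of $sl_X$ is automatically bounded on every section of $\PCurr(S)$.

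The core of the argument is now to extend $sl_X$ to positive rational multicurves by linearity and to show that for every sequence of rational multicurves $\mu_n \to \mu$ in $\Curr(S)$ the sequence $sl_X(\mu_n)$ is Cauchy with limit depending only on $\mu$. This is the main obstacle. The heuristic is that $sl_X(\gamma)$ is encoded, up to additive error vanishing as $n\to\infty$, by the count of crossings of a long orbit segment of $\gamma^n$ with the walls of a fixed $\pi_1(S)$-invariant tiling of $X$. Pulling these walls back via $\Phi$ gives a finite $\pi_1(S)$-invariant collection of quasi-geodesics in $\tilde S$, and the crossing count can be compared with the geometric intersection number of $\gamma$ with a fixed filling system of curves $\{\sigma_1,\dots,\sigma_k\}$ on $S$. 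Weak-$*$ convergence $\mu_n \to \mu$ controls the intersection numbers $i(\sigma_j,\mu_n) \to i(\sigma_j,\mu)$ by continuity of Bonahon's intersection form, while the bi-Lipschitz comparison above controls the remainder uniformly in $n$, yielding the Cauchy property.

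Once this is in place, set $sl_X(\mu) := \lim_n sl_X(\mu_n)$: continuity follows from a standard diagonal argument together with the independence of the approximating sequence; homogeneity is inherited from the definition on multicurves; and positivity follows from the lower bound $sl_X(\mu) \ge \tfrac{1}{K}\,\ell_\rho(\mu)$ combined with the fact that $L_\rho$ is filling, so $\ell_\rho(\mu) > 0$ whenever $\mu \ne 0$. Uniqueness of the extension is then the first paragraph.
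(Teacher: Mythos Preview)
The paper does not actually prove this theorem: immediately after the statement it says ``The proof of Theorem~\ref{stable} is rather involved, with the main difficulty being how to define the stable length of a current, and we will not explain it here, but we refer the reader to \cite{EPS}.'' So there is no in-paper proof to compare against; the question is whether your sketch stands on its own.

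It does not. The uniqueness paragraph and the Milnor--Svarc comparison $\tfrac{1}{K}\ell_\rho(\gamma)\le sl_X(\gamma)\le K\ell_\rho(\gamma)$ are fine, but this bi-Lipschitz bound only tells you that $sl_X(\mu_n)$ stays bounded when $\mu_n\to\mu$; it says nothing about convergence. Many functions on curves are bi-Lipschitz to $\ell_\rho$ and yet do not extend continuously to $\Curr(S)$ --- the paper's own example with the generating set $\{a^{\pm1},b^{\pm1},a^{\pm2}\}$ is exactly of this type. So the bi-Lipschitz comparison cannot be what carries the Cauchy property.

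The paragraph you label ``the core of the argument'' is where the entire content of the theorem lives, and it is only a heuristic. You assert that crossing counts with walls of a tiling of $X$ can be ``compared with'' intersection numbers $i(\sigma_j,\gamma)$ for a fixed filling system, and that weak-$*$ convergence of $\mu_n$ then controls the remainder uniformly. But the remainder is precisely the difference between $sl_X$ and a linear combination of the $i(\sigma_j,\cdot)$, and you have given no mechanism for bounding it: a quasi-isometry does not take geodesics to geodesics, the pulled-back walls are only quasi-geodesics, and the additive errors in crossing counts are of the same order as the quantities you are trying to control. Making this step rigorous is exactly the ``rather involved'' part the paper declines to reproduce; it requires a genuine construction of $sl_X(\mu)$ for arbitrary currents (in \cite{EPS} via an averaging or coding procedure on the boundary), not just an appeal to approximating sequences. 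As written, the proposal identifies the right difficulty but does not resolve it.
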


The proof of Theorem \ref{stable} is rather involved, with the main difficulty being how to define the stable length of a current, and we will not explain it here, but we refer the reader to \cite{EPS}. Instead we give some consequences of Theorem \ref{stable}.  
If we equip $S$ with any Riemannian metric and let $X$ be its universal cover $\tilde{S}$ we immediately get the following corollary:

\begin{corollary}\label{riemannian}
Let $\rho$ be any Riemannian metric on $S$. For a  curve $\gamma$, let $\ell_{\rho}(\gamma)$ be the $\rho$-length of a shortest representative. Then the stable length defined by 
$$st_{\rho}(\gamma)=\lim_{n\to\infty}\frac{1}{n}\ell_{\rho}(\gamma^n)$$
has a unique continuous extension to a length function 
$$st_{\rho}: \Curr(S)\to \mathbb{R}_{+}.$$
\end{corollary}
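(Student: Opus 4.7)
The plan is to deduce Corollary \ref{riemannian} directly from Theorem \ref{stable} by choosing $X$ to be the Riemannian universal cover of $S$. Let $(\tilde{S},\tilde{\rho})$ denote the universal cover of $S$ equipped with the pull-back metric. Since $S$ is a closed surface, the deck transformation action of $\pi_1(S)$ on $\tilde{S}$ is free, properly discontinuous, cocompact, and by isometries of $\tilde{\rho}$. Moreover, $(\tilde{S},\tilde{\rho})$ is a complete Riemannian manifold, and hence by Hopf--Rinow it is a geodesic metric space. Thus $X:=\tilde{S}$ satisfies the hypotheses of Theorem \ref{stable}.

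The next step is to identify the translation length $\ell_X$ on the set of curves with the length $\ell_\rho$. Fix a conjugacy class $[\gamma]\in\pi_1(S)$, which we view simultaneously as a curve on $S$. The standard lifting correspondence, namely that any loop in $S$ of $\rho$-length $L$ in the free homotopy class of $\gamma$ lifts to a $\tilde{\rho}$-arc of length $L$ from some $x\in\tilde{S}$ to $\gamma(x)$, and conversely, gives
$$\ell_X(\gamma)=\inf_{x\in\tilde{S}} d_{\tilde{\rho}}(x,\gamma(x))=\ell_\rho(\gamma).$$
Applying this identity to every power $\gamma^n$ yields $\ell_X(\gamma^n)=\ell_\rho(\gamma^n)$, and consequently $sl_X(\gamma)=st_\rho(\gamma)$ for every curve $\gamma$ on $S$.

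The conclusion now follows at once from Theorem \ref{stable}: the theorem supplies a unique continuous, homogeneous, positive extension $sl_X\colon\Curr(S)\to\mathbb{R}$ of the stable length from curves to currents, and by the previous paragraph this $sl_X$ restricts to $st_\rho$ on the set of curves. This function is therefore the desired continuous extension of $st_\rho$. Uniqueness is automatic, since by Bonahon the set of weighted curves is dense in $\Curr(S)$, so any two continuous functions on $\Curr(S)$ agreeing on weighted curves must coincide. I do not expect a genuine obstacle beyond Theorem \ref{stable} itself; the only substantive check in the passage from the theorem to the corollary is the routine Riemannian-geometric identification $\ell_X=\ell_\rho$ described above.
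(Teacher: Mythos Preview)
Your proof is correct and follows exactly the approach the paper takes: the corollary is stated immediately after the remark that equipping $S$ with a Riemannian metric and letting $X=\tilde{S}$ yields the result directly from Theorem~\ref{stable}. You have simply filled in the routine verifications (Hopf--Rinow, the identification $\ell_X=\ell_\rho$, uniqueness via density of weighted curves) that the paper leaves implicit.
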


Similarly, if we let $X$ be the Cayley graph with respect to a finite generating set of $\pi_1(S)$ we also have: 

\begin{corollary}\label{word}
Let $G$ be any finite generating set for $\pi_1(S)$. Let $\ell_{G}(\gamma)$ denote the shortest word length of a representative in the conjugacy class of $\gamma$. Then the stable length defined by 
$$st_{G}(\gamma)=\lim_{n\to\infty}\frac{1}{n}\ell_{G}(\gamma^n)$$
has a unique continuous extension to a length function
$$st_{G}: \Curr(S)\to \mathbb{R}_{+}.$$
\end{corollary}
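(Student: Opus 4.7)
The plan is to derive this as a direct specialization of Theorem \ref{stable}, by choosing the geodesic metric space $X$ to be the Cayley graph $\mathrm{Cay}(\pi_1(S), G)$ with its usual edge-length-one simplicial metric. Since the corollary comes immediately after Theorem \ref{stable} and mentions the Cayley graph explicitly as the motivating example, the task is really to check that all hypotheses are met and that the two definitions of stable length coincide.

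First I would verify the hypotheses of Theorem \ref{stable} for $X = \mathrm{Cay}(\pi_1(S), G)$. The Cayley graph is a connected graph and so, equipped with the path metric giving each edge length one, it is a geodesic metric space. The fundamental group $\pi_1(S)$ acts on $X$ by left multiplication on vertices, extended linearly on edges, and this action is by isometries. It is free (hence in particular discrete) on vertices, and the quotient $X / \pi_1(S)$ is a finite graph (with one vertex and $|G|/2$ edges), so the action is cocompact.

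Next I would identify the translation length $\ell_X(\gamma)$ with the shortest word length of a conjugate of $\gamma$. For any vertex $g \in \pi_1(S)$ of the Cayley graph, we have $d(g, \gamma g) = |g^{-1}\gamma g|_G$, so the infimum over vertices of $d(x, \gamma(x))$ equals $\min_{g \in \pi_1(S)} |g^{-1} \gamma g|_G$, which is exactly $\ell_G(\gamma)$ as defined in the corollary statement. Allowing $x$ to range over all points of $X$ (not just vertices) may decrease the infimum by at most a bounded additive constant (one edge length), so passing to the stable limit gives
\[
sl_X(\gamma) \;=\; \lim_{n\to\infty} \frac{1}{n}\ell_X(\gamma^n) \;=\; \lim_{n\to\infty} \frac{1}{n}\ell_G(\gamma^n) \;=\; st_G(\gamma)
\]
for every curve $\gamma$. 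Theorem \ref{stable} now supplies a unique continuous extension of $sl_X$ to a length function on $\Curr(S)$, which is exactly the desired extension of $st_G$.

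There is no genuine obstacle here beyond Theorem \ref{stable} itself; the only point requiring care is the verification that translation length on the Cayley graph agrees, up to bounded error that vanishes in the stable limit, with the minimal word length in the conjugacy class. Once that bookkeeping is done, the corollary is simply the statement of Theorem \ref{stable} applied to this particular $X$, together with the uniqueness clause therein (which is what guarantees that the extension does not depend on any choices made in setting up the Cayley graph).
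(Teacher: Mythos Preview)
Your proposal is correct and follows exactly the approach indicated in the paper: the corollary is simply Theorem~\ref{stable} applied with $X$ equal to the Cayley graph of $\pi_1(S)$ with respect to $G$, and the paper does not supply any further argument beyond that remark. Your verification that the Cayley graph satisfies the hypotheses and that $sl_X$ coincides with $st_G$ on curves is the natural (and only) bookkeeping needed.
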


We remark that in \cite{EPS} Theorem \ref{stable} was proved in a more general setting, namely when the surface group is replaced by any torsion free Gromov hyperbolic group $\Gamma$. It is shown that in this setting, the corresponding stable length of a conjugacy class extends to a continuous length function on the \emph{space of (oriented) currents on $\Gamma$}. This space, introduced by Bonahon \cite{Bo91} and denoted $\Curr(\Gamma)$, is defined to be the set of $\Gamma$-invariant Radon measures on the double boundary 
$$\left(\partial\Gamma\times\partial\Gamma\setminus\Delta\right)/\sim$$
where $\partial\Gamma$ is the Gromov boundary of $\Gamma$ and where we identify $(\gamma_1, \gamma_2)$ with $(\gamma_2, \gamma_1)$ (see, for example, \cite{col:KB02}). Since we will not use this more general setting here we refer to \cite{Bo91} and \cite{EPS} for the precise definitions.
 
We also remark that Theorem \ref{stable} was proved by Bonahon \cite{Bo91} in the case when $X$ is "uniquely geodesic at infinity", i.e. any two points on the (Gromov) boundary at infinity of $X$ determine a unique geodesic between them. However, this condition is not satisfied in general for the universal cover of Riemannian metrics, nor for Cayley graphs. 

Finally we note that, in \cite{Bo91}, Bonahon remarks that it should be possible to remove not only the uniquely geodesic hypothesis, which Theorem \ref{stable} proves, but also the cocompact assumption. The proof of Theorem \ref{stable} in \cite{EPS} still requires $\Gamma$ to act cocompactly on $X$ and it is an interesting question whether it is a necessary condition. 

\begin{question}
Does Theorem \ref{stable} still hold for a surface group that acts discretely, but not cocompactly on $X$?
\end{question}

It should be noted that the assumption on the action to be discrete cannot be removed, as shown by Bonahon \cite{Bo91}.

%%%%%%%%%%%%%%%%%%%%%%%%%%%%%%%%%%%%%%%%%%%%%%%%%%

\subsection{Stable length as a generalization of intersection length}\label{section:glorious}

At first glance, extending length of curves to length functions on currents through the intersection length or by considering the stable length might seem like very different approaches. However, as we will observe below, the two notions can be unified: given a filling current $\nu$ one can construct a metric space $(X,d)$ on which $\pi_1(S)$ acts discretely and cocompactly by isometries, and such that 
$$sl_X(\gamma)=i(\nu, \gamma)$$
for all curves $\gamma$ on $S$. The basis for our metric is a semi-distance presented by Glorieux in \cite{Glorious}, described below. 

Fix a hyperbolic metric $\rho$ on $S$ and let $\tilde{S}$ be the universal covering equipped with the pull back metric. We define a metric space $(X,d)$ in the following way. As in section \ref{section:Liouville}, for a geodesic arc $\tilde{\eta}$ let $\mathcal{G}(\tilde{\eta})$ denote the set of geodesics in $\tilde{S}$ that intersect $\tilde{\eta}$ transversely. Let $\partial_{\rho}\mathcal{G}(\tilde{\eta})$ denote the image of $\mathcal{G}(\tilde{\eta})$ under the homeomorphism that maps each geodesic to its pair of endpoints. Let $\nu$ be a filling current in $\Curr(S)$. For two distinct points $x,y\in\tilde{S}$, define
$$d'(x,y)=\nu(\partial_{\rho}\mathcal{G}(\tilde{\eta}))$$
where $\tilde{\eta}$ is the geodesic arc connecting $x$ and $y$. Set $d'(x,x)=0$ for all $x\in\tilde{S}$. 

Note that $d'$ is symmetric, i.e. $d'(x,y)=d'(y,x)$, and $d'(x,y)\geq0$ for all $x,y\in\tilde{S}$ (although $d'$ might not separate points). Furthermore, by definition of the intersection number (see section \ref{intnumber}), if $x$ lies on the axis of an element $\gamma\in\pi_1(S)$ then 
\begin{equation}\label{equality}
d'(x,\gamma(x))=i(\nu,\gamma). 
\end{equation}
Moreover, in \cite{Glorious} it is shown that 
\begin{enumerate}
\item $d'$ satisfies the triangle inequality, i.e. $d'(x,y)\leq d'(x,z) + d'(z,y)$ for all $x,y,x\in\tilde{S}$, 
\item $i(\nu, \gamma)\leq d'(x, \gamma(x))$ for all $x\in\tilde{S}$ and $\gamma\in\pi_1(S)$. \label{inequality}
\end{enumerate}

In particular, $d'$ is a semi-distance. In \cite{Glorious} $d'$ was used to find the critical exponent for geodesic currents, here we use it to construct our desired metric space. Define
$$X=\tilde{S}/\sim$$
where $x\sim y$ if and only if $d'(x,y)=0$, equipped with the metric $d$ induced by $d'$. That is, 
$$d\left([x],[y]\right) = d'(x,y)$$ 
for all $[x],[y]\in X$, where $x$ and $y$ are any representatives of $[x]$ and $[y]$, respectively. 
Using \ref{inequality} above and equation \eqref{equality} we see that the stable length with respect to $X$ agrees with the length function defined by intersection with $\nu$:
$$sl_X(\gamma)=\lim_{n\to\infty}\frac{1}{n}\inf_{x\in X}d(x,\gamma^{n}(x))=\lim_{n\to\infty}\frac{1}{n}i(\nu,\gamma^n) = i(\nu, \gamma)$$
for any conjugacy class $[\gamma]$ in $\pi_1(S)$ (or, equivalently, any curve $\gamma$ on $S$). 

Since $\nu$ is $\pi_1(S)$-invariant, $\pi_1(S)$ acts by isometries on $(X,d)$ and, since the action is cocompact on $\tilde{S}$ it is also cocompact on $(X,d)$. Moreover, it is not hard to see that $\pi_1(S)$ acts discretely on $(X,d)$ since $\nu$ is filling: if there exists a sequence $(\gamma_n)$ in $\pi_1(S)$ and $x\in\tilde{S}$ such that $d(x,\gamma_n(x))\to0$ as $n\to\infty$, then, by \ref{inequality}, $i(\nu, \gamma_n)\to0$ as $n\to\infty$, contradicting the fact that $\nu$ is filling.

We have the following result: 

\begin{theorem}
Let $\nu$ be any filling current. Then there exists a metric space $X$ on which $\pi_1(S)$ acts discretely and cocompactly by isometries such that
$$sl_X(\gamma)=i(\nu,\gamma)$$
for all curves $\gamma$ on $S$. 
\qed
\end{theorem}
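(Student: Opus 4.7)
The plan is to follow the construction already sketched in the paragraphs preceding the statement, but to present it as a coherent proof with the verifications organized in the right order. The candidate space will be $X = \tilde S / \sim$ where $\sim$ is the equivalence relation induced by Glorieux's semi-distance $d'(x,y) := \nu(\partial_\rho \mathcal{G}(\tilde \eta_{x,y}))$, with $\tilde \eta_{x,y}$ the $\tilde\rho$-geodesic joining $x$ and $y$, and $\rho$ a fixed auxiliary hyperbolic metric on $S$. All the analytic input needed, namely symmetry, nonnegativity, the triangle inequality for $d'$, and the inequality $i(\nu,\gamma) \le d'(x,\gamma(x))$, has been recorded from \cite{Glorious}, so the work will be essentially bookkeeping.

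First I would check that the induced function $d([x],[y]) := d'(x,y)$ on $X \times X$ is a well-defined metric. Well-definedness follows from the triangle inequality for $d'$: if $x \sim x'$ and $y \sim y'$, then $d'(x,y) \le d'(x,x') + d'(x',y') + d'(y',y) = d'(x',y')$, and symmetrically, so $d'(x,y) = d'(x',y')$. Symmetry, nonnegativity, and the triangle inequality descend immediately, and separation of points is built into the quotient. Next I would transfer the $\pi_1(S)$-action to $X$: since $\nu$ is a $\pi_1(S)$-invariant measure, and the homeomorphism $\partial_\rho$ and the collection $\mathcal{G}(\tilde \eta_{x,y})$ are equivariant under the isometric action of $\pi_1(S)$ on $\tilde S$, the semi-distance $d'$ is $\pi_1(S)$-invariant; this implies that the relation $\sim$ is $\pi_1(S)$-invariant and that the induced action on $X$ is by isometries.

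For the dynamical properties of the action, cocompactness comes for free from cocompactness on $\tilde S$, since the quotient map $\tilde S \to X$ is equivariant and continuous (any fundamental domain for $\pi_1(S)$ in $\tilde S$ surjects onto a compact fundamental domain in $X$). For discreteness, the key input is that $\nu$ is filling: if there were a sequence $\gamma_n \in \pi_1(S) \setminus \{1\}$ and a point $[x] \in X$ with $d([x],\gamma_n [x]) \to 0$, then $d'(x,\gamma_n(x)) \to 0$; by inequality (\ref{inequality}) from the preceding discussion, this forces $i(\nu,\gamma_n) \to 0$, contradicting the fact (property (\ref{step nonzero}) of the intersection form) that the filling current $\nu$ pairs positively with every nontrivial curve, together with a lower bound coming from the finiteness of the set of conjugacy classes of bounded $d'$-length.

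Finally, to compute the stable length, I would use the equality $d'(x,\gamma(x)) = i(\nu,\gamma)$ whenever $x$ lies on the axis of $\gamma$, which was observed in equation (\ref{equality}) just before the theorem. Applying this to $\gamma^n$ (which has the same axis as $\gamma$) gives $\inf_{x \in X} d(x,\gamma^n(x)) \le i(\nu,\gamma^n) = n \cdot i(\nu,\gamma)$ by bilinearity of the intersection form, while the opposite inequality $i(\nu,\gamma^n) \le \inf_{x \in X} d(x,\gamma^n(x))$ is exactly (\ref{inequality}). Dividing by $n$ and taking $n \to \infty$ yields $sl_X(\gamma) = i(\nu,\gamma)$, as required. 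The only genuinely subtle step in this outline is the discreteness verification; everything else is routine translation between $\tilde S$ and its quotient.
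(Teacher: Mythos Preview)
Your proposal is correct and follows essentially the same approach as the paper: the paper's proof is precisely the discussion preceding the theorem statement (hence the \qed\ at the end of the statement), building $X$ as the quotient of $\tilde S$ by Glorieux's semi-distance $d'$, verifying the isometric, cocompact, and discrete action exactly as you do, and computing $sl_X$ via the sandwich between equation~(\ref{equality}) and inequality~(\ref{inequality}). Your write-up is somewhat more explicit about well-definedness of $d$ on the quotient and slightly more careful in flagging that the discreteness argument needs a uniform positive lower bound on $i(\nu,\gamma)$ over nontrivial $\gamma$ (which the paper leaves implicit in the phrase ``contradicting the fact that $\nu$ is filling''), but the route is the same.
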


%%%%%%%%%%%%%%%%%%%%%%%%%%%%%%%%%%%%%%%%%%%
%%%%%%%%%%%%%%%%%%%%%%%%%%%%%%%%%%%%%%%%%%%

\section{Applications to counting curves}\label{counting}

In \cite{Mir1, Mir2} Mirzakhani gives the asymptotic growth rate of the number of curves of bounded length, in each $\Mod(S)$-orbit, as the length grows. 
%Namely, given a curve $\gamma_0$ on $S$ and a hyperbolic metric ${\rho}$ on $S$, she showed that the number of curves in the $\Mod(S)\cdot\gamma_0$ with  as $L$ grows is asymptotic to a constant multiple of $L^{6g-6}$:

\begin{theorem}\cite[Theorem 1.1]{Mir1, Mir2}\label{Maryam}
Let $\gamma_0$ be a curve on $S$, and ${\rho}$ be a hyperbolic metric on $S$. Then 
$$\lim_{L\to\infty}\frac{\#\{\gamma\in\Mod(S)\cdot\gamma_0\,\vert\,\ell_{\rho}(\gamma)\leq L\}}{L^{6g-6}} = C_{\gamma_0}\cdot m_{\rho}$$
for some $C_{\gamma_0}>0$, and $m_{\rho}=m_{Th}(\{\lambda\in\ML(S)\,\vert\,\ell_{\rho}(\lambda)\leq1\})$ where $m_{Th}$ is the Thurston measure on $\ML(S)$. 
\end{theorem}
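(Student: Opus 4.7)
The plan is to reformulate the counting problem as a weak convergence statement for a sequence of measures on $\ML(S)$. For each $L>0$, define
$$\mu_L = \frac{1}{L^{6g-6}}\sum_{\gamma \in \Mod(S)\cdot\gamma_0}\delta_{\gamma/L},$$
viewed as a Radon measure on $\ML(S)\subset\Curr(S)$ by identifying each curve with its associated current and scaling the current by $1/L$. By Theorem \ref{nonpositive} applied to $\rho$, the length function $\ell_\rho$ extends continuously and homogeneously to all of $\Curr(S)$, so setting $B_\rho = \{\lambda\in\ML(S) : \ell_\rho(\lambda)\leq 1\}$, one has $\gamma/L \in B_\rho$ if and only if $\ell_\rho(\gamma)\leq L$. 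Hence the counting function equals $L^{6g-6}\mu_L(B_\rho)$, and the theorem reduces to the assertion $\mu_L(B_\rho) \to C_{\gamma_0}\cdot m_\rho$.

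The heart of the argument is then the stronger claim that $\mu_L$ converges weakly to $C_{\gamma_0}\cdot m_{Th}$ on $\ML(S)$ for some $C_{\gamma_0}>0$ depending only on the topological type of $\gamma_0$. Granting this, and using that $B_\rho$ is compact (by property (\ref{step compact}) of the intersection form, since the Liouville current $L_\rho$ is filling) and that its boundary $\{\ell_\rho = 1\}$ is $m_{Th}$-null (as $m_{Th}$ is equivalent to piecewise-linear Lebesgue measure in train track coordinates and $\{\ell_\rho = 1\}$ is a codimension-one Lipschitz sphere relative to the scaling $\lambda\mapsto t\lambda$), portmanteau gives $\mu_L(B_\rho)\to C_{\gamma_0}\cdot m_{Th}(B_\rho) = C_{\gamma_0}\cdot m_\rho$.

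The weak convergence splits into two cases. When $\gamma_0$ is a simple multicurve, Mirzakhani's original approach applies: one parametrizes the $\Mod(S)$-orbit via the topological type of the complement $S\setminus\gamma_0$ and reduces the count to an integral of a length-based weight against the Weil--Petersson volume on a moduli space of bordered surfaces of smaller complexity, whose asymptotic expansion in $L$ produces the polynomial growth of degree $6g-6$ and identifies the limiting measure as a multiple of $m_{Th}$. When $\gamma_0$ is non-simple, no direct moduli-space integration is available, and one instead proceeds as in Erlandsson--Souto: exploit the continuity and homogeneity of $\ell_\rho$ on the full space $\Curr(S)$ (Theorem \ref{nonpositive}), together with Bonahon's density of weighted closed curves in $\Curr(S)$ and ergodicity of the $\Mod(S)$-action on $\PML(S)$, to transfer the scaling asymptotics from the simple case to an arbitrary orbit by a compactness and equidistribution argument in $\PCurr(S)$.

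The main obstacle is precisely this passage from the simple to the non-simple case. For simple $\gamma_0$ the symplectic geometry of moduli space delivers the answer by a direct computation, but for general $\gamma_0$ one must compare counts of orbit points in $\Curr(S)\setminus\ML(S)$ with counts of simple multicurves in $\ML(S)$, and nothing in the moduli space picture speaks to non-simple orbits. The continuous length functions on currents developed in Section \ref{length} are precisely the tool that makes this comparison possible: they provide a single ambient topology on which both types of counts may be tested against the same continuity set $B_\rho$, allowing the simple-case equidistribution to propagate to the non-simple case.
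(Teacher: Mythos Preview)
The paper does not prove Theorem~\ref{Maryam} at all; it is quoted as an external result of Mirzakhani \cite{Mir1,Mir2} and then used as a black box. What the paper \emph{does} prove is the generalization, Theorem~\ref{juanviveka}, and your proposal is essentially a sketch of that argument rather than a proof of Mirzakhani's theorem. The trouble is that the paper's proof of Theorem~\ref{juanviveka} explicitly invokes Theorem~\ref{Maryam} in its final step (step~\ref{stepmaryam}): after showing that every subsequential limit of the measures $m_{\gamma_0}^L$ is \emph{some} multiple $C\cdot m_{Th}$, the paper pins down $C=C_{\gamma_0}$ by plugging in a hyperbolic $\rho$ and appealing to Mirzakhani's already-established limit for that same $\gamma_0$. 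So invoking the Erlandsson--Souto machinery ``as in the paper'' to prove Theorem~\ref{Maryam} is circular.

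Two further slips. First, for non-simple $\gamma_0$ the orbit $\Mod(S)\cdot\gamma_0$ does not sit in $\ML(S)$, so your $\mu_L$ is not a measure on $\ML(S)$ but on $\Curr(S)$; the passage to a limit supported on $\ML(S)$ is itself one of the substantial steps (step~\ref{ml} in the paper's outline) and requires the map $\pi_{\gamma_0}^\epsilon$ built from Theorem~\ref{angle}. Second, your description of the non-simple case---``transfer the scaling asymptotics from the simple case by compactness and equidistribution''---does not match what actually happens. The mechanism is not density of curves or ergodicity on $\PP\ML$ in any soft sense; it is the specific geometric input that generic long curves in the orbit have small self-intersection angles (Theorem~\ref{angle}), the resulting uniformly bounded-to-one surgery map into $\ML_{\ZZ}(S)$ (Lemma~\ref{bound}), and Masur's ergodicity of $m_{Th}$ to force the limit to be a scalar multiple. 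None of this yields the constant $C_{\gamma_0}$ without an independent input, which in the paper is precisely Theorem~\ref{Maryam}.
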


The constant $C_{\gamma_0}$ in Theorem \ref{Maryam} is independent of the hyperbolic metric ${\rho}$. In fact, Mirzakhani \cite{Mir1} showed that 
$$C_{\gamma_0}=\frac{n_{\gamma_0}}{\bold{m}_g}$$
where $n_{\gamma_0}>0$ depends only on $\gamma_0$ and 
\begin{equation}
\bold{m}_g = \int_{\mathcal{M}} m_{\rho} \, d\hspace{-0.1cm}\vol_{WP}\label{WP}
\end{equation}
where the integral is taken over the moduli space 
$$\mathcal{M}=\Teich(S)/\Mod(S)$$
with respect to the Weil--Petersson volume form. 

The \emph{Thurston measure} is the natural $\Mod(S)$-invariant locally finite measure on $\ML(S)$ given by the piecewise linear structure coming from train track coordinates. See section \ref{th} for details.  

The purpose of this section is to discuss a generalization of the theorem of Mirzakhani above, based on the previous section (see Theorem \ref{juanviveka}). We will explain why the same asymptotic behavior as in Theorem \ref{Maryam} holds for other metrics on $S$, in particular for any Riemannian metric. The results presented are contained in \cite{ES}, \cite{viv} and \cite{EPS}. The idea behind the proof of the generalization to other metrics crystallized over the above series of papers, so we provide a unified but brief explanation for the statements and proofs of these results. 

\begin{remark}
Theorem \ref{Maryam}, as well as its generalization Theorem \ref{juanviveka} below, holds for any finite type, orientable surface of negative Euler characteristic (other than the thrice punctured sphere). That is, we can allow $S$ to have $n$ punctures or boundary components, and the same asymptotic behavior holds (where we replace $6g-6$ in the exponent with $6g-6+2n$). However, somewhat surprisingly, orientability is a necessary condition.  For non-orientable surfaces the theorems fail, see \cite{Gendulphe, Magee}.
\end{remark}

%%%%%%%%%%%%%%%%%%%%%%%%%%%%%%%%%%

\subsection{Thurston measure}\label{th}

Recall, from section \ref{laminations}, that the space $\ML(S)$ of measured laminations has a $\Mod(S)$-invariant piecewise linear manifold structure. Moreover, the $\PL$-manifold is equipped with a $\Mod(S)$-invariant symplectic structure, which gives rise to a $\Mod(S)$-invariant measure in the Lebesgue class. This is the Thurston measure $m_{Th}$. 
It is infinite, but locally finite, and satisfies
$$m_{Th}(L\cdot U)=L^{6g-6}\cdot m_{Th}(U)$$
for every Borel set $U\subset\ML(S)$ and $L>0$ (see \cite{ThurstonNotes}). Furthermore, as shown by Masur \cite{MasurPAMS}, the Thurston measure $m_{Th}$ is ergodic with respect to the $\Mod(S)$-action on $\ML(S)$, and is the only (up to scaling) invariant measure in the Lebesgue class. Recall that a measure $m$ is said to be ergodic with respect to $\Mod(S)$ if for every $\Mod(S)$-invariant Borel set $U$ we have that either $m(U)=0$ or $m(U^{c})=0$. %has either $0$ or full $m$-measure. 

In this section we explain how one can see the Thurston measure (up to scaling) as a limit of a sequence of measures, which gives perhaps a more intuitive feeling of what this measure is. 

For each $L$, define a measure on $\ML(S)$ by
$$m^{L} =\frac{1}{L^{6g-6}} \sum_{\gamma\in\ML_{\mathbb{Z}}(S)} \delta_{\frac{1}{L}\gamma}$$
where $\delta_x$ denotes the Dirac measure centered at $x$ and $\ML_{\mathbb{Z}}(S)$ is the subset of $\ML(S)$ corresponding to integral multicurves. We will show that, as $L\to\infty$, these measures converge to a multiple of the Thurston measure, i.e.

\begin{equation}\label{thurston measure}
\lim_{L\to\infty}\frac{1}{L^{6g-6}} \sum_{\gamma\in\ML_{\mathbb{Z}}(S)} \delta_{\frac{1}{L}\gamma} = c\cdot m_{Th}
\end{equation}
for some $c>0$. 
Note that each $m^{L}$ is $\Mod(S)$-invariant, and hence so is any limit. We will show that the limit is moreover in the Lebesgue class and it follows that it must be a multiple of the Thurston measure. 

It is enough to show the convergence of the measures in each chart given by the linear piecewise structure on $\ML(S)$. Hence we fix a maximal train track $\tau$ and let $C(\tau)$ be the solution set to the switch equations of $\tau$. The set $C(\tau)$ is a rational cone of dimension $6g-6$ in $\mathbb{R}^{E}$, where $E$ is the number of edges of $\tau$, and defines an open set in $\ML(S)$ given by all measured laminations carried by $\tau$. The integral simple multicurves carried by $\tau$ correspond exactly to the integer points in $C(\tau)$ which in turn, by the rationality of $C(\tau)$, we identify with a subset of $\mathbb{Z}^{6g-6}$. Accordingly, we identify $C(\tau)$ with a cone $C'(\tau)$ in $\mathbb{R}^{6g-6}$ such that $C'(\tau)\cap \mathbb{Z}^{6g-6}$ correspond to the integral multicurves carried by $\tau$. Finally, we push forward $m^L$ through these identifications to a measure on $\mathbb{R}^{6g-6}\cap C'(\tau)$ which is the restriction of the measure 
$$m_{\tau}^{L} = \frac{1}{L^{6g-6}} \sum_{p\in\mathbb{Z}^{6g-6}} \delta_{\frac{1}{L}p}$$
(viewed as a measure on $\mathbb{R}^{6g-6}$) to the cone $C'(\tau)$. 

%Let $\tau\in\mathcal{T}$ and consider the homeomorphism $f_{\tau}$ that maps the subset of $\ML(S)$ consisting of all measured laminations carried by $\tau$ to the corresponding cone $C(\tau)$ in $\mathbb{R}^{6g-6}$ (i.e. the set of solutions to the switch equations of $\tau$). Now, the image under $f_{\tau}$ of the set of all integral multicurves carried by $\tau$ corresponds to the integer points in $C(\tau)$. Hence pushing the measure $m^{L}$ forward through $f_{\tau}$ we get the following measure on (the cone in) $\mathbb{R}^{6g-6}$:
%$$m_{\tau}^{L} = \frac{1}{L^{6g-6}} \sum_{p\in\mathbb{Z}^{6g-6}} \delta_{\frac{1}{L}p}.$$ 

It is not hard to see that $m_{\tau}^{L}$ converges to the Lebegue measure as $L\to\infty$. However, we include an outline for a proof of this statement here, since we will use a similar argument in section \ref{sec:countinglength} concerning convergence of a family of measures on the space of currents. 

Note that the family $(m_{\tau}^{L})_{L}$ is precompact in the space of Radon measures on $\mathbb{R}^{6g-6}$, meaning that any sequence of measures has a subsequence that weakly converges to a measure. Indeed, since the space of probability measures on a compact metric space is compact, it is enough to show that  
\begin{equation}\label{limsup}
\limsup_{L\to\infty} m_{\tau}^L(R_s)<\infty
\end{equation}
where $R_s$ is a (closed) cube of side length $s$ in $\mathbb{R}^{6g-6}$. Clearly we have 
\begin{equation}\label{eq:trivial}
(s-1)^{6g-6}\leq\#R_s\cap\mathbb{Z}^{6g-6}\leq(s+1)^{6g-6}
\end{equation}
and so 
\begin{equation*}\label{lebesgue}
m_{\tau}^L(R_s)=\frac{\#\{p\in\mathbb{Z}^{6g-6}\,\vert\,p\in R_{s\cdot L}\}}{L^{6g-6}}\leq\frac{(sL+1)^{6g-6}}{L^{6g-6}}
\end{equation*}
and the limit (superior) of the right hand side is finite. Hence \eqref{limsup} holds.
Now let $m$ be any limit point of $(m_{\tau}^{L})_{L}$.  Note that for each $L$, the measure $m_{\tau}^{L}$ is invariant under translation in the lattice $(\frac{1}{L}\mathbb{Z})^{6g-6}$. It follows that $m$ is translation invariant in $\mathbb{R}^{6g-6}$ and hence must be a multiple of the Lebesgue measure (since this is the unique measure, up to scaling, with this property). We have: For for any $(L_n)_{n}$ with $L_n\to \infty$ there exists a subsequence $(L_{n_k})_{k}$ such that
$$m_{\tau}^{L_{n_k}} \to c\cdot \mathfrak{L}$$ 
for some $c>0$ as $k\to\infty$, where $\mathfrak{L}$ denotes the Lebesgue measure. Hence, to prove \eqref{thurston measure} we need to show that $c$ is independent of the subsequence. Note that, as above, 
$$m_{\tau}^L(R_1)=\frac{\#\left\{p\in\mathbb{Z}^{6g-6}\,\vert\,p\in R_L\right\}}{L^{6g-6}}$$
and the right hand side converges as $L\to\infty$ by \eqref{eq:trivial} to 1, that is, to the Lebesgue measure of the unit cube $R_1$. Hence the limit of the right hand side does not depend on the subsequence and \eqref{thurston measure} follows.

%%%%%%%%%%%%%%%%%%%%%%%%%%%%%%%%%%%%%%%%%%%%%%%%%%%%%%

\subsection{Counting with respect to length functions}\label{sec:countinglength}

Given a hyperbolic metric $\rho$ on $S$ and its corresponding Liouville current $L_{\rho}$, one can replace the length function $\ell_{\rho}(\cdot)$ in Theorem \ref{Maryam} with the intersection function $i(L_{\rho}, \cdot)$. In view of this, one can consider the following generalization of the limit appearing in the mentioned theorem:
\begin{equation}\label{limit}
\lim_{L\to\infty}\frac{\#\{\gamma\in\Mod(S)\cdot\gamma_0\,\vert\,i(\nu, \gamma)\leq L\}}{L^{6g-6}}
\end{equation}
where $\gamma_0$ is a curve on $S$ and $\nu$ is any filling current. (Note that we require $\nu$ to be filling to guarantee that there are only finitely many curves with bounded intersection number with $\nu$). In particular, by letting $\nu$ be a Liouville current for another metric, such as a variable negatively curved or Euclidean cone metric, this is equivalent to asking if the limit \eqref{limit} exists with respect to this metric. 

In \cite{ES} it was shown that limit \eqref{limit} exists for any filling current $\nu$, and in fact, more generally when the intersection function $i(\nu,\cdot)$ is replaced by any continuous length function $\ell(\cdot)$ defined on the space of currents. Recall that we say $\ell$ is a length function on $\Curr(S)$ if it is homogeneous and $\ell(\mu)\geq 0$ for all currents $\mu$ and $\ell(\mu)=0$ if and only if $\mu=0$. 

\begin{theorem}\cite{ES}\label{juanviveka}
Let $\ell:\Curr(S)\to\mathbb{R}$ be any continuous length function and $\gamma_0$ a curve on $S$. Then
\[
\lim_{L\to\infty}\frac{\#\{\gamma\in\Mod(S)\cdot\gamma_0\,\vert\,\ell(\gamma)\leq L\}}{L^{6g-6}}=C_{\gamma_0}\cdot m_{\ell}
\]
where $C_{\gamma_0}>0$ is the same constant as in Theorem \ref{Maryam}, and 
$$m_{\ell}=m_{Th}\left(\{\lambda\in\ML(S)\,\vert\,\ell(\lambda)\leq1\}\right).$$ 
\end{theorem}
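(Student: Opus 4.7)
The plan is to recast the counting problem as a weak-$*$ convergence statement for a natural family of measures on $\Curr(S)$, identify the limit via Mirzakhani's Theorem \ref{Maryam}, and then evaluate the limit against the sublevel set cut out by $\ell$. Define the Radon measures
$$\mu^L_{\gamma_0} \;=\; \frac{1}{L^{6g-6}} \sum_{\gamma \in \Mod(S)\cdot\gamma_0} \delta_{\gamma/L}$$
on $\Curr(S)$. By homogeneity of $\ell$, the ratio in the theorem equals $\mu^L_{\gamma_0}(\{\mu\in\Curr(S) : \ell(\mu)\leq 1\})$, so it suffices to establish the main claim that $\mu^L_{\gamma_0}$ converges in the weak-$*$ topology, as Radon measures on $\Curr(S)$, to $C_{\gamma_0}\cdot m_{Th}$, where $m_{Th}$ is viewed as a Radon measure on $\Curr(S)$ supported on $\ML(S)$.

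The first step is precompactness. Fix a hyperbolic metric $\rho_0$ with Liouville current $L_{\rho_0}$. Since $L_{\rho_0}$ is filling, property (\ref{step compact}) from Section \ref{intnumber} shows that the sets $K_R = \{\mu : i(L_{\rho_0},\mu)\leq R\}$ are compact and exhaust $\Curr(S)$. Applying Mirzakhani's Theorem \ref{Maryam} to $\ell_{\rho_0}$ yields $\mu^L_{\gamma_0}(K_R) \to R^{6g-6} C_{\gamma_0} m_{\ell_{\rho_0}}$ as $L\to\infty$, so the family $(\mu^L_{\gamma_0})_L$ is uniformly tight and hence precompact in the weak-$*$ topology. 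Let $m$ be any subsequential weak-$*$ limit. Three structural properties follow directly: first, $m$ is supported on $\ML(S)$, because $\Mod(S)$-invariance of $i(\cdot,\cdot)$ gives $i(\gamma/L,\gamma/L) = L^{-2}i(\gamma_0,\gamma_0)\to 0$, so by continuity of the intersection form $m$ is concentrated on $\{\mu : i(\mu,\mu)=0\} = \ML(S)$; second, $m$ is $\Mod(S)$-invariant, as each $\mu^L_{\gamma_0}$ is; third, $m$ satisfies the homogeneity $m(tU) = t^{6g-6} m(U)$, obtained by passing to the limit in $\mu^L_{\gamma_0}(tU) = t^{6g-6}\mu^{tL}_{\gamma_0}(U)$.

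The next step is to identify $m$ as a multiple of $m_{Th}$. I would adapt the measure-theoretic argument outlined at the end of Section \ref{th} to the measures $\mu^L_{\gamma_0}$. The goal is to show that in each maximal train-track chart the restriction of $m$ is translation invariant along arbitrarily fine lattices, hence translation invariant on $\mathbb{R}^{6g-6}$, hence a multiple of Lebesgue measure; these local multiples then assemble into $m = c\cdot m_{Th}$ for some $c\geq 0$. Once this is in place, applying Mirzakhani's theorem to a single hyperbolic length function $\ell_{\rho_0}$ forces $c = C_{\gamma_0}$, by matching $\mu^L_{\gamma_0}(\{\ell_{\rho_0}\leq 1\})\to c\cdot m_{\ell_{\rho_0}}$ against Theorem \ref{Maryam}. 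Since every subsequential limit equals $C_{\gamma_0}\cdot m_{Th}$, the full weak-$*$ convergence $\mu^L_{\gamma_0}\to C_{\gamma_0}\cdot m_{Th}$ follows.

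Finally, to extract the numerical conclusion of the theorem, observe that the set $K = \{\mu : \ell(\mu) \leq 1\}$ is compact: continuity and positivity of $\ell$ give a lower bound on $\ell$ along any cross-section of the compact space $\PCurr(S)$, so the constraint $\ell(\mu)\leq 1$ forces $i(L_{\rho_0},\mu)$ to be bounded. Its boundary $\{\ell = 1\}$ has $m_{Th}$-measure zero, since the scaling $m_{Th}(tK) = t^{6g-6}m_{Th}(K)$ together with finiteness of $m_{Th}(K\cap\ML(S))$ rules out positive mass on any level set. The portmanteau theorem then yields
$$\lim_{L\to\infty} \mu^L_{\gamma_0}(K) \;=\; C_{\gamma_0}\cdot m_{Th}(K\cap\ML(S)) \;=\; C_{\gamma_0}\cdot m_\ell,$$
which is exactly the desired identity. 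The main obstacle is the identification of $m$ with a multiple of $m_{Th}$: unlike the situation of Section \ref{th}, where the counting measures are defined directly on the integer lattice of a chart, here the orbit points $\gamma/L$ are distributed in a much more intricate way, and recovering translation invariance requires a genuine equidistribution argument. This is where the core technical content of \cite{ES, viv, EPS} is concentrated.
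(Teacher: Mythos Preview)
Your overall framework---define the counting measures $\mu^L_{\gamma_0}$, establish their weak-$*$ convergence to $C_{\gamma_0}\cdot m_{Th}$, then apply the Portmanteau theorem to the sublevel set $\{\ell\le 1\}$---matches the paper's exactly, and your argument that the boundary $\{\ell=1\}$ is Thurston-null is the same scaling argument the paper gives. Your observation that any limit is supported on $\ML(S)$ because $i(\gamma/L,\gamma/L)=L^{-2}i(\gamma_0,\gamma_0)\to 0$ is in fact slicker than the route the paper takes for that particular step.

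The substantive gap is in how you propose to identify the limit $m$ as a multiple of $m_{Th}$. You suggest importing the translation-invariance argument of Section~\ref{th} into train-track charts, but this does not go through: the measures $m^L$ of Section~\ref{th} are supported on the lattice $\frac{1}{L}\mathbb Z^{6g-6}$ inside each chart, and their invariance under $\frac{1}{L}\mathbb Z^{6g-6}$-translation is immediate from that structure; by contrast, the points of $\Mod(S)\cdot\gamma_0$ are non-simple curves, so $\gamma/L$ does not even lie in $\ML(S)$, let alone on a lattice in a chart, and there is no translation symmetry of $\mu^L_{\gamma_0}$ to pass to the limit. Knowing only that $m$ is $\Mod(S)$-invariant, supported on $\ML(S)$, and $(6g-6)$-homogeneous is also not enough, since $\ML(S)$ carries other $\Mod(S)$-invariant Radon measures. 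The paper's mechanism is different: it builds a resolution map $\pi_{\gamma_0}^\epsilon\colon\Sigma_{\gamma_0}^\epsilon\to\ML_{\mathbb Z}(S)$ on a generic subset of the orbit, using Theorem~\ref{angle} (generic long orbit-curves have arbitrarily small self-intersection angles, so one can uncross them into nearby simple multicurves), and then proves this map is uniformly bounded-to-one (Lemma~\ref{bound}). Pushing $\mu^L_{\gamma_0}$ forward through $\pi_{\gamma_0}^\epsilon$ dominates it by a constant multiple of the lattice measures of Section~\ref{th}, which shows every limit is \emph{absolutely continuous} with respect to $m_{Th}$; Masur's ergodicity of $m_{Th}$ under $\Mod(S)$ then forces $m=C\cdot m_{Th}$. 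The final step, pinning $C=C_{\gamma_0}$ by evaluating on a hyperbolic length ball and invoking Theorem~\ref{Maryam}, is exactly as you describe.
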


Here we give an outline of the arguments involved in proving Theorem \ref{juanviveka}, and refer to \cite{ES} for the details. 

The main idea to prove the convergence of the limit 
\begin{equation}\label{limit f}
\lim_{L\to\infty}\frac{\#\{\gamma\in\Mod(S)\cdot\gamma_0\,\vert\,\ell(\gamma)\leq L\}}{L^{6g-6}}
\end{equation}
is to consider a sequence of measures on $\Curr(S)$ analogous to the measures on $\ML(S)$ in section \ref{th}. Let $\gamma_0\in S$ be a curve and define, for each $L>0$, a measure on $\Curr(S)$ by
$$m_{\gamma_0}^{L}=\frac{1}{L^{6g-6}}\sum_{\gamma\in\Mod(S)\cdot\gamma_0}\delta_{\frac{1}{L}\gamma}.$$
Note that each $m_{\gamma_0}^L$ is locally finite and invariant under the action of $\Mod(S)$. In fact, we will see that, as $L\to\infty$ they converge to a $\Mod(S)$-invariant measure on $\ML(S)$ that is absolutely continuous with respect to the Thurston measure, and hence, using the ergodicity of $m_{Th}$, they must converge to a multiple of this measure:

\begin{theorem}\cite[Theorem 5.1]{ES, EPS}\label{thm convergence}
Let $\gamma_0$ be any curve on $S$. Then 
$$\lim_{L\to\infty} m^L_{\gamma_0} = C_{\gamma_0}\cdot m_{Th}$$
where $C_{\gamma_0}>0$ is the constant in Theorem \ref{Maryam}.  
\end{theorem}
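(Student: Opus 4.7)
The plan is to mimic the argument sketched in section~\ref{th} for $m^L \to c \cdot m_{Th}$, but now carried out on all of $\Curr(S)$, and to use Mirzakhani's theorem (Theorem~\ref{Maryam}) as a black box to pin down the limiting constant $C_{\gamma_0}$.

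First I would establish precompactness of $(m^L_{\gamma_0})_L$ in the space of Radon measures on $\Curr(S)$. Fixing a filling current $\nu$, the sets $K_R = \{\mu \in \Curr(S) : i(\nu,\mu) \leq R\}$ are compact by property~(\ref{step compact}) in section~\ref{intnumber}, and
$$m^L_{\gamma_0}(K_R) \;=\; \frac{\#\{\gamma \in \Mod(S)\cdot\gamma_0 : i(\nu,\gamma) \leq RL\}}{L^{6g-6}}.$$
A polynomial (order $L^{6g-6}$) upper bound on this count, of the kind established in the proof of Theorem~\ref{Maryam}, yields $\limsup_L m^L_{\gamma_0}(K_R)<\infty$, hence precompactness. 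Passing to a weak subsequential limit $m$, three properties transfer automatically: (i) $m$ is $\Mod(S)$-invariant; (ii) $m$ is supported on $\ML(S)$, since $i(\gamma/L,\gamma/L) = i(\gamma,\gamma)/L^2 \to 0$ forces any cluster point $\mu$ to satisfy $i(\mu,\mu)=0$; and (iii) $m$ is homogeneous of degree $6g-6$, since the identity $m^L_{\gamma_0}(tU) = t^{6g-6} m^{tL}_{\gamma_0}(U)$ passes to the limit as $L \to \infty$.

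The crux is identifying $m$. Applying Theorem~\ref{Maryam} to a hyperbolic metric $\rho$ and using that $\{\mu : \ell_\rho(\mu)\leq 1\}$ is compact with $m_{Th}$-negligible boundary (via the homogeneity of $m_{Th}$ and a standard portmanteau argument), one obtains
$$m\bigl(\{\lambda\in\ML(S) : \ell_\rho(\lambda)\leq 1\}\bigr) \;=\; C_{\gamma_0}\cdot m_\rho \;=\; C_{\gamma_0}\cdot m_{Th}\bigl(\{\lambda : \ell_\rho(\lambda)\leq 1\}\bigr).$$
To conclude $m = C_{\gamma_0}\cdot m_{Th}$, I would invoke Masur's theorem that $m_{Th}$ is the unique (up to scaling) $\Mod(S)$-invariant Radon measure on $\ML(S)$ in the Lebesgue class: once $m$ is known to lie in this class, $\Mod(S)$-invariance gives $m = c\, m_{Th}$ for some $c\geq 0$, and the displayed equality fixes $c = C_{\gamma_0}$. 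Since every subsequential limit then equals $C_{\gamma_0}\cdot m_{Th}$, the full family converges.

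The main obstacle I expect is the absolute continuity step: showing any limit $m$ lies in the Lebesgue class on $\ML(S)$. The natural route is to work locally inside a maximal train-track chart $C(\tau)$, pushing $m^L_{\gamma_0}$ forward to $\mathbb{R}^{6g-6}$ and arguing, in analogy with section~\ref{th}, that the images converge to a multiple of Lebesgue measure on the cone $C'(\tau)$. The difficulty is that $\Mod(S)\cdot\gamma_0$ is not a lattice inside $C(\tau)$, so the tidy lattice-point count used in section~\ref{th} does not apply verbatim; one instead needs an equidistribution statement for the mapping class group orbit of $\gamma_0$ in train-track coordinates, which is the technical heart of the argument in~\cite{ES}.
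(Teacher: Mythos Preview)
Your skeleton matches the paper's exactly: precompactness, $\Mod(S)$-invariance of any limit, support on $\ML(S)$, absolute continuity with respect to $m_{Th}$, then Masur's ergodicity to get $m=C\cdot m_{Th}$, and finally Mirzakhani's Theorem~\ref{Maryam} to identify $C=C_{\gamma_0}$. Your argument for support on $\ML(S)$ via $i(\gamma/L,\gamma/L)=i(\gamma_0,\gamma_0)/L^2\to 0$ is in fact cleaner than the paper's route; the paper only obtains this as a byproduct of the push-forward construction used for the absolute continuity step.

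Where you part ways is precisely at that absolute continuity step, which you rightly flag as the crux. Your proposed mechanism---``an equidistribution statement for $\Mod(S)\cdot\gamma_0$ in train-track coordinates''---is not what the paper does, and such a statement is not available a priori (indeed it is close to the conclusion one is trying to prove). The paper's device is geometric rather than dynamical. One first shows (Theorem~\ref{angle}) that, generically in $\Mod(S)\cdot\gamma_0$, all self-intersection angles of a long curve are below any prescribed $\delta$. Resolving these small-angle crossings produces a map $\pi_{\gamma_0}\colon \Sigma^\epsilon_{\gamma_0}\to\ML_{\mathbb Z}(S)$ from a generic subset of the orbit to simple integral multicurves, with $\ell(\pi_{\gamma_0}(\gamma))$ close to $\ell(\gamma)$. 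The decisive estimate (Lemma~\ref{bound}) is that $\pi_{\gamma_0}$ is \emph{uniformly} bounded-to-one: $\lvert\pi_{\gamma_0}^{-1}(\lambda)\rvert<K$ for all $\lambda$. Pushing $m^L_{\gamma_0}$ forward through $\pi_{\gamma_0}$ yields a measure $n^L_{\gamma_0}$ on $\ML(S)$ dominated by $K$ times the lattice measure $L^{-(6g-6)}\sum_{\lambda\in\ML_{\mathbb Z}(S)}\delta_{\lambda/L}$, and that lattice measure converges to $m_{Th}$ by the argument of section~\ref{th}. This domination is exactly what gives absolute continuity of any limit point. So the missing idea is not equidistribution of the orbit but a uniformly-finite-to-one comparison with the integer lattice in $\ML(S)$, manufactured via the small-angle resolution.
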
 

First we explain why Theorem \ref{thm convergence} implies Theorem \ref{juanviveka}. Fix a continuous length function $\ell: \Curr(S)\to\mathbb{R}$  and let 
$$B_{\ell}=\{\mu\in\Curr(S)\,\vert\,\ell(\mu)\leq1\}.$$
Note that limit \eqref{limit f} is equivalent to 
$$\lim_{L\to\infty}m_{\gamma_0}^L(B_{\ell}).$$ 
The continuity of $\ell$ implies that $B_{\ell}$ is a closed set. Also, for any measurable set $U$ satisfying $U\cap L\cdot U=\emptyset$ for any positive $L\neq1$, the scaling properties of the Thurston measure imply that $m_{Th}(U)=0$. To see this, note that for all $L\neq1$
%\ve{Please check this}\cu{I can't find a mistake but it looks fishy :), but let's keep it as is and ask someone}
$$m_{Th}(U\cup L\cdot U)=m_{Th}(U)+m_{Th}(L\cdot U)=m_{Th}(U)(1+L^{6g-6})$$
and letting $L\to1$ we get $m_{Th}(U)=2m_{Th}(U)$, i.e. $m_{Th}(U)=0$. In particular, $m_{Th}(\partial B_{\ell})=0$. Hence, by the Portmanteau Theorem, see \cite{Bil},
$$\lim_{L\to\infty} m^L_{\gamma_0} = C_{\gamma_0}\cdot m_{Th}$$
implies that
$$\lim_{L\to\infty}m_{\gamma_0}^L(B_{\ell})=C_{\gamma_0}\cdot m_{Th}(B_{\ell})$$
where we view $m_{Th}$ as a measure on $\Curr(S)$ with full support on the subspace $\ML(S)$. Theorem \ref{juanviveka} follows.  
%\cu{I don't understand why we need $m_{Th}(\partial B_{\ell})=0$, if the measures converge weakly then we can take our function to be the characteristic function of that set, then we get the second convergence, what am I missing here?}\ve{If $m_n$ converges to $m$ weakly, then a sufficient condition for $m_n(U)$ to converge to $m(U)$ is that $m(\partial U)=0$. To see why not always true other wise: Let $m_n$ be the characteristic function for $1/n$ and let $U=(0,1)$. Then $m_n$ converges weakly to $m_0$ but $m_n(U)=1$ for all $n$ and $m_0(U)=0$.}
%Hence, if we can show that the measures $m_{\gamma_0}^L$ converge as $L\to\infty$ to some measure $m_{\gamma_0}$ which assigns measure $0$ to the boundary $B_{\ell}$, then
%$$\lim_{L\to\infty}m_{\gamma_0}^L(B_{\ell})=m_{\gamma_0}(B_{\ell}).$$
%In particular, limit \eqref{limit f} exists. We will show that any limit point $\mu_{\gamma_0}$ is supported on $\ML$ and in fact is a multiple of $\mu_{Th}$. Note that the scaling properties of the Thurston measure and the homogeneity of $f$ imply that $\mu_{Th}(\partial B_{\ell})=0$.

Next we outline the arguments proving Theorem \ref{thm convergence}. In an attempt to aid the reader we first outline the main steps involved in the proof: 
\begin{enumerate}
\item Let $m_{\gamma_0}$ be any limit point of the family $(m_{\gamma_0}^L)_{L}$, and note that it is $\Mod(S)$-invariant. 
\item We show that $m_{\gamma_0}$ is supported on $\ML(S)$, and \label{ml}
\item that $m_{\gamma_0}$ is absolutely continuous with respect to the Thurston measure $m_{Th}$ on $\ML(S)$.\label{abs}
\item Ergodicity of $m_{Th}$ with respect to $\Mod(S)$ together with the steps above, imply that $m_{\gamma_0}=C\cdot m_{Th}$ for some $C>0$.\label{step multiple}
\item Finally, using Mirzakhani's theorem (Theorem \ref{Maryam}) we show that the constant $C$ above does not depend on the subsequence and is in fact equal to $C_{\gamma_0}$. Hence $m^{L}_{\gamma_0}\to C_{\gamma_0}\cdot m_{Th}$.\label{stepmaryam}
\end{enumerate}

We formalize the conclusion of step \ref{step multiple} below:

\begin{proposition}\cite[Proposition 4.1]{ES}\label{multiple}
Let $(L_n)_n$ be any sequence of positive numbers such that $L_n\to\infty$. Then there is a subsequence $(L_{n_k})_k$ such that 
$$m_{\gamma_0}^{L_{n_k}}\to C\cdot m_{Th}$$
for some $C>0$, as $k\to\infty$. 
\end{proposition}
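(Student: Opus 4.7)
The plan is to prove Proposition \ref{multiple} in three stages: (a) show that the family $\{m_{\gamma_0}^L\}_L$ is weakly precompact on $\Curr(S)$, so every sequence $L_n \to \infty$ admits a convergent subsequence; (b) identify every subsequential limit $m_{\gamma_0}$ as a $\Mod(S)$-invariant Radon measure supported on $\ML(S)$ and absolutely continuous with respect to $m_{Th}$; (c) conclude via Masur's ergodicity theorem.

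For stage (a), it suffices to bound $m_{\gamma_0}^L(K)$ uniformly in $L$ for each compact $K \subset \Curr(S)$. Property (\ref{step compact}) of the intersection form provides a filling current $\nu$ and $M > 0$ with $K \subset \{\mu \in \Curr(S) : i(\nu,\mu) \leq M\}$, so the problem reduces to an a priori polynomial upper bound
\[
\#\{\gamma \in \Mod(S) \cdot \gamma_0 : i(\nu,\gamma) \leq L\} \leq C \cdot L^{6g-6}
\]
with $C$ independent of $L$; such a bound is classical and can be established by lattice-point counting in train-track charts. Any limit $m_{\gamma_0}$ extracted this way is automatically $\Mod(S)$-invariant.

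For stage (b), supportedness in $\ML(S) = \{\mu : i(\mu,\mu) = 0\}$ comes essentially for free: since the intersection form is $\Mod(S)$-invariant, every $\gamma \in \Mod(S)\cdot\gamma_0$ satisfies $i(\gamma,\gamma) = i(\gamma_0,\gamma_0)$, hence $i(\tfrac{1}{L}\gamma, \tfrac{1}{L}\gamma) \to 0$; the continuity of the intersection form then prevents any Dirac mass at $\tfrac{1}{L}\gamma$ from lying in a neighborhood where $i(\cdot,\cdot)$ is bounded below by a positive constant, once $L$ is large. For absolute continuity $m_{\gamma_0} \ll m_{Th}$, I would work chart-by-chart: fix a maximal train track $\tau$ and its cone $C'(\tau) \subset \mathbb{R}^{6g-6}$ from Section \ref{th}. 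Because rescaled orbit points approaching a lamination carried by $\tau$ can be compared with integral multicurves via train-track carrying, one should obtain a cube-wise estimate of the form $m_{\gamma_0}^L(R) \leq D \cdot m^L_\tau(R)$ for every cube $R \subset C'(\tau)$, with $D$ depending only on $\tau$ and $\gamma_0$. Passing to the limit and using $m^L_\tau \to c \cdot \mathfrak{L}$ from Section \ref{th} yields $m_{\gamma_0}|_{C(\tau)} \ll \mathfrak{L}$, and since the charts $C(\tau)$ cover $\ML(S)$ and $m_{Th}$ is Lebesgue in these coordinates, $m_{\gamma_0} \ll m_{Th}$ globally.

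Once (a) and (b) are in place, stage (c) is short. Writing $m_{\gamma_0} = f \cdot m_{Th}$ for some $\Mod(S)$-invariant density $f \geq 0$, Masur's ergodicity theorem forces $f$ to be $m_{Th}$-a.e.\ constant, so $m_{\gamma_0} = C \cdot m_{Th}$ for some $C \geq 0$. Positivity $C > 0$ follows from producing an open set $U \subset \ML(S)$ with $m_{Th}(U) > 0$ and $\liminf_L m_{\gamma_0}^L(U) > 0$, which can be arranged by iterating a pseudo-Anosov (or Dehn twist chain) around curves intersecting $\gamma_0$ to produce orbit points whose rescalings accumulate in $U$ with positive density. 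I expect the absolute continuity step to be the main obstacle, since it requires a genuine train-track comparison between orbit counting and integer-point counting in $C'(\tau)$, rather than a purely soft dynamical argument.
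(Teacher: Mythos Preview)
Your three-stage architecture (precompactness $\to$ support on $\ML(S)$ $\to$ absolute continuity $\to$ ergodicity) matches the paper's exactly, and your argument for support on $\ML(S)$ via $i(\tfrac{1}{L}\gamma,\tfrac{1}{L}\gamma)=\tfrac{1}{L^2}i(\gamma_0,\gamma_0)\to 0$ is in fact cleaner than what the paper does: the paper obtains this only as a byproduct of a push-forward construction, whereas you get it directly from continuity of the intersection form and Portmanteau.

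The substantive gap is in stage (b), absolute continuity. You correctly flag this as the main obstacle, but the sentence ``rescaled orbit points approaching a lamination carried by $\tau$ can be compared with integral multicurves via train-track carrying'' does not supply a mechanism. The curves in $\Mod(S)\cdot\gamma_0$ are in general \emph{not} simple, so they are not carried by any train track, and there is no direct way to count them by integer points in $C'(\tau)$. The paper's route is genuinely geometric and non-obvious: first one proves an \emph{angle theorem} (Theorem~\ref{angle}) asserting that for any $\delta>0$, the proportion of $\gamma\in\Mod(S)\cdot\gamma_0$ with $\ell_\rho(\gamma)\le L$ having some self-intersection angle $\ge\delta$ is $o(L^{6g-6})$. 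This lets one define, on a generic subset $\Sigma_{\gamma_0}^\epsilon$ of the orbit, a \emph{resolution map} $\pi_{\gamma_0}^\epsilon:\Sigma_{\gamma_0}^\epsilon\to\ML_{\mathbb Z}(S)$ that smooths all crossings and lands in simple multicurves with length distorted by at most $(1\pm\epsilon)$. The decisive technical point is then Lemma~\ref{bound}: this map is \emph{uniformly bounded-to-one}, $|\pi_{\gamma_0}^{-1}(\lambda)|<K$. Pushing $m_{\gamma_0}^L$ forward through $\pi_{\gamma_0}$ gives a measure dominated by $K$ times the integer-point measure $\tfrac{1}{L^{6g-6}}\sum_{\lambda\in\ML_{\mathbb Z}(S)}\delta_{\lambda/L}$, and since the latter converges to a multiple of $m_{Th}$ (equation~\eqref{thurston measure}), absolute continuity follows. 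This same construction also delivers the polynomial upper bound you need in stage (a), so in the paper's logic that bound is not a separate ``classical'' input but a consequence of the resolution map. Your cube-wise estimate $m_{\gamma_0}^L(R)\le D\cdot m_\tau^L(R)$ is morally what one wants, but producing the constant $D$ is exactly the content of the angle theorem plus the bounded-to-one lemma; without those ingredients the estimate is an assertion, not an argument.
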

As above, due to the Portmanteau theorem, we get the following consequence: 

\begin{corollary}\label{strong}
Let $\ell:\Curr(S)\to\mathbb{R}$ be a continuous length function and let $(L_n)_n$ be any sequence of positive numbers such that $L_n\to\infty$. Then there is a subsequence $(L_{n_k})_k$ such that 
$$m_{\gamma_0}^{L_{n_k}}(B_{\ell})\to C\cdot m_{Th}(B_{\ell})$$
for some $C>0$, as $k\to\infty$. 
\end{corollary}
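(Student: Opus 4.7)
The plan is to deduce Corollary \ref{strong} directly from Proposition \ref{multiple} via the Portmanteau theorem, using exactly the same continuity-set argument that was already used earlier in the paper to derive Theorem \ref{juanviveka} from Theorem \ref{thm convergence}. First, given a sequence $(L_n)_n$ with $L_n \to \infty$, Proposition \ref{multiple} supplies a subsequence $(L_{n_k})_k$ and a constant $C > 0$ with $m^{L_{n_k}}_{\gamma_0} \to C \cdot m_{Th}$ as $k \to \infty$. Since $B_{\ell} = \{\mu \in \Curr(S) : \ell(\mu) \leq 1\}$ is closed (by continuity of $\ell$), I would then invoke Portmanteau, whose hypothesis $m_{Th}(\partial B_\ell) = 0$ needs to be checked.

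For this last point, continuity of $\ell$ forces $\partial B_\ell \subseteq U := \{\mu \in \Curr(S) : \ell(\mu) = 1\}$, and homogeneity of $\ell$ gives $U \cap (L \cdot U) = \emptyset$ for every $L \neq 1$. The scaling computation used in the paragraph following Theorem \ref{thm convergence}, namely
\[
m_{Th}(U \cup L \cdot U) = m_{Th}(U) + m_{Th}(L \cdot U) = m_{Th}(U)(1 + L^{6g-6}),
\]
together with letting $L \to 1$, yields $m_{Th}(U) = 2 m_{Th}(U)$, hence $m_{Th}(U) = 0$, and therefore $m_{Th}(\partial B_\ell) = 0$. The Portmanteau theorem then gives $m^{L_{n_k}}_{\gamma_0}(B_\ell) \to C \cdot m_{Th}(B_\ell)$, which is precisely the claim.

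I expect essentially no serious obstacle here, since the corollary is a formal consequence of Proposition \ref{multiple} combined with the already-executed continuity-set argument. The only mildly delicate point is that one should remark that $B_\ell$ is relatively compact, so that the Portmanteau theorem for locally finite Radon measures may be applied without issue. This relative compactness follows from a short argument using positivity and continuity of $\ell$ together with compactness of $\PCurr(S)$: a sequence $(\mu_n) \subseteq B_\ell$ escaping to infinity in $\Curr(S)$ would, after projectivization and extraction, converge to some $[\mu] \in \PCurr(S)$; writing $\mu_n = c_n \mu_n'$ with $\mu_n' \to \mu$ and $c_n \to \infty$, positivity of $\ell$ on the nonzero current $\mu$ combined with homogeneity would force $\ell(\mu_n) = c_n \ell(\mu_n') \to \infty$, contradicting $\mu_n \in B_\ell$.
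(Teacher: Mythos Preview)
Your proposal is correct and follows essentially the same approach as the paper: the paper simply states that the corollary follows from Proposition~\ref{multiple} ``due to the Portmanteau theorem,'' invoking the same continuity-set argument used earlier to derive Theorem~\ref{juanviveka} from Theorem~\ref{thm convergence}. Your added remark that $B_\ell$ is relatively compact (so that Portmanteau applies to these locally finite measures) is a detail the paper leaves implicit, and your justification via compactness of $\PCurr(S)$ is fine.
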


The key idea behind proving Proposition \ref{multiple} is to associate to each (generic) curve in $\Mod(S)\cdot\gamma_0$ a simple multi-curve. Specifically, we define a map
$$\pi_{\gamma_0}^{\epsilon}: \Sigma_{\gamma_0}^{\epsilon}\to\ML_{\mathbb{Z}}(S)$$
where $\Sigma_{\gamma_0}^{\epsilon}\subset\Mod(S)\cdot\gamma_0$ is a generic subset 
such that 
\begin{equation}\label{epsilon}
(1-\epsilon)\ell(\gamma)<\ell(\pi^{\epsilon}_{\gamma_0}(\gamma))<(1+\epsilon)\ell(\gamma).
\end{equation}
We say a set $\Sigma$ is generic if 
$$\frac{\#\Sigma}{L^{6g-6}}\to 0$$
as $L\to\infty$. 
The existence of such a map results from the following observation, which says that the expected angle of self-intersection of a long curve is arbitrarily small. 

\begin{theorem}\cite[Theorem 1.2]{ES}\label{angle}
Let $\angle(\gamma)$ denote the largest angle among the self-intersection angles of a curve $\gamma$. Let $\gamma_0\subset S$ be a curve and $\rho$ a hyperbolic metric. Then 
$$\lim_{L\to\infty}\frac{\#\{\gamma\in\Mod(S)\cdot\gamma_0\,\vert\,\ell_{\rho}(\gamma)\leq L, \angle(\gamma)\geq\delta\}}{L^{6g-6}}=0$$
for all $\delta>0$.
\end{theorem}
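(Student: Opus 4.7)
The plan is to use a surgery argument at high-angle self-intersections, combined with Mirzakhani's theorem (Theorem \ref{Maryam}) applied on subsurfaces of strictly smaller complexity.

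The key geometric input is the following: if a closed $\rho$-geodesic $\gamma$ has a transverse self-intersection of angle at least $\delta$, then at least one of the two hyperbolic smoothings of this crossing yields a geodesic multicurve $\gamma'$ with $\ell_\rho(\gamma') \leq \ell_\rho(\gamma) - c(\delta)$, where $c(\delta)>0$ depends only on $\delta$ and $\rho$. This should follow from hyperbolic trigonometry: the two geodesic branches meeting at angle $\geq \delta$ form a definite ``corner,'' and passing to the geodesic representative of the smoothed free homotopy class rounds this corner, shortening the curve by an amount bounded below in terms of $\delta$ and the injectivity radius of $(S,\rho)$.

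For each $\gamma \in A_L := \{\gamma \in \Mod(S)\cdot\gamma_0 : \ell_\rho(\gamma) \leq L,\ \angle(\gamma) \geq \delta\}$ I would iteratively smooth high-angle self-intersections until none remain, producing a multicurve $\gamma^\flat$ of $\rho$-length at most $L$. The next step, which I expect to be the crux of the argument, is a structural result: up to finitely many combinatorial types, $\gamma^\flat$ decomposes as a bounded-complexity piece supported in a proper subsurface $\Sigma \subsetneq S$ together with a simple multicurve in the complementary subsurface $S \setminus \Sigma$. Granting such a decomposition, counting bad curves reduces to counting simple multicurves on $S\setminus\Sigma$, whose number of length $\leq L$ grows with exponent $\dim \ML(S\setminus\Sigma) < 6g-6$ by Mirzakhani's theorem applied on the lower-complexity subsurface. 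Summing over the finitely many combinatorial types of ``bad parts,'' and accounting for the bounded fibers of the smoothing map, then yields the desired $o(L^{6g-6})$ bound.

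The main obstacle is justifying this structural decomposition rigorously. A particularly delicate case is when many high-angle crossings are spread throughout $S$ rather than concentrated in a small region: there one likely needs a volume-type estimate in the unit tangent bundle of $(S,\rho)$ bounding the number of distinct geodesic configurations exhibiting a self-crossing of angle $\geq \delta$. Without an input of this kind, naively combining the length saving $c(\delta)$ with a surgery fiber count only gives $O(L^{6g-6})$ with a smaller constant, which is insufficient to conclude that the ratio vanishes. A secondary alternative would be to work directly in $\PCurr(S)$ and argue that every accumulation point of $\{[\gamma]/L : \gamma \in A_L\}$ fails to lie in $\PML$, but examples like a fixed small loop attached to a high power of a simple curve show that this projective approach must be supplemented with a careful treatment of configurations whose ``bad part'' has bounded length as $L\to\infty$.
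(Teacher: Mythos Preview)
The paper does not give a full proof of this theorem; it cites \cite{ES} and offers only a one-paragraph sketch. That sketch, however, points in a rather different direction from your surgery approach. The idea in \cite{ES}, as summarized in the paper, is that a self-intersection of angle at least $\delta$ forces an ideal $4$-gon to appear among the complementary regions of the curve (or of a train track carrying it). A curve whose geodesic representative has such a $4$-gon in its complement is, up to bounded error, carried by a \emph{non-maximal} train track; the laminations carried by non-maximal train tracks form a subset of $\ML(S)$ of dimension strictly less than $6g-6$, and this is what drives the $o(L^{6g-6})$ bound. In other words, the mechanism is not ``smooth the crossing and count the results,'' but ``the crossing itself is a codimension-one constraint on the combinatorics of the curve.''

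Your proposal, by contrast, tries to manufacture a dimension drop \emph{after} performing surgery, via a structural decomposition of the smoothed multicurve $\gamma^\flat$ into a bounded piece plus a simple multicurve on a proper subsurface. You correctly identify this decomposition as the crux and as unproven, and you also correctly observe that the length saving $c(\delta)$ alone gives only $O(L^{6g-6})$ with a smaller constant. This is a genuine gap: nothing in the smoothing procedure forces $\gamma^\flat$ to live on a proper subsurface, and in general it will not. The missing idea is precisely the one the paper highlights --- the $4$-gon (equivalently, non-maximality of the carrying train track) is the structural constraint, and it is visible \emph{before} any surgery. Once one sees that curves with a $\delta$-large crossing are carried by train tracks with a complementary region bigger than a triangle, the dimension drop is immediate from the train-track parametrization of $\ML(S)$, and no delicate control of surgery fibers or subsurface decompositions is needed.
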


The proof of Theorem \ref{angle} is quite involved (see \cite[Section 3.3]{ES}), but the general idea
%\cu{this requires a picture, not hand drawn :)}
is that large self-intersection angles result in ideal $4$-gons on the surface which most of the curves have to avoid. The set of curves on $S$ which do not intersect a $4$-gon must live on a proper subsurface and hence the number of these curves of length bounded by $L$ must grow at a slower rate than $L^{6g-6}$. This idea is inspired by the fact that the subspace of $\ML(S)$ of measured laminations carried by non-maximal train tracks (i.e. train tracks that have complementary regions larger than triangles) has dimension strictly less than $6g-6$.

Armed with Theorem \ref{angle}, we can resolve the self-intersections and end up with a simple multi-curve whose length is close to the length of the original curve (see \cite[Section 3.4]{ES} for details), and this is the idea for the map $\pi_{\gamma_0}^{\epsilon}$. In particular, for any $\epsilon>0$ there is an angle bound $\delta>0$ such that any curve $\gamma$ with self-intersection angles less than $\delta$ is mapped to a simple multi-curve $\pi^{\epsilon}_{\gamma_0}(\gamma)$ satisfying \eqref{epsilon}. These curves are what make up the generic set $S_{\gamma_0}^{\epsilon}$. 

We fix $\epsilon>0$ and suppress the superscript in $\pi^{\epsilon}_{\gamma_0}$ for ease of notation. It is clear that $\pi_{\gamma_0}$ is finite-to-one, but the main useful property of the map, and the key technical difficulty of the proof (details of which will be omitted here, see \cite[Section 2.4]{ES}) is that it is uniformly bounded-to-$1$. That is: 

\begin{lemma}\cite[Proposition 3.9]{ES}\label{bound}
There exists a constant $K=K(\gamma_0)>0$ such that
\begin{equation}\label{boundeq}
\vert\pi_{\gamma_0}^{-1}(\lambda)\vert<K
\end{equation}
for all $\lambda\in\ML_{\mathbb{Z}}(S)$.
\end{lemma}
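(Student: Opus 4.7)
The starting observation is that the geometric intersection form is $\Mod(S)$-invariant, so every curve $\gamma \in \Mod(S)\cdot\gamma_0$ has the same self-intersection number $N := i(\gamma_0,\gamma_0)$. Consequently, for each $\gamma \in \pi_{\gamma_0}^{-1}(\lambda)$, the resolution map $\pi_{\gamma_0}^{\epsilon}$ has smoothed exactly $N$ self-intersections, each of angle less than the threshold $\delta$ fixed by the choice of $\epsilon$. Hence every preimage of $\lambda$ arises from $\lambda$ by $N$ local ``inverse-smoothings."

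The first step is to formalize the inverse of a single local smoothing as a cross-surgery supported in a disk $D \subset S$ whose diameter is bounded uniformly in $\delta$ and the fixed hyperbolic metric $\rho$, and whose intersection with $\lambda$ consists of exactly two nearly parallel subarcs. At each such disk there are at most two possible cross-surgeries, only one of which is compatible with the specific smoothing rule used to define $\pi_{\gamma_0}^{\epsilon}$. Therefore every preimage $\gamma \in \pi_{\gamma_0}^{-1}(\lambda)$ is encoded by a finite piece of combinatorial data: an unordered $N$-tuple of pairwise disjoint surgery disks $\{D_1,\ldots,D_N\}$ on $S$ of the type above.

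The second step is to bound the number of admissible $N$-tuples. The count of geometric candidate disks can grow with the length of $\lambda$, so a purely local count is insufficient; the key global constraint is that the curve produced by the $N$ surgeries must lie in the single orbit $\Mod(S)\cdot\gamma_0$. I would argue that two preimages $\gamma,\gamma' \in \pi_{\gamma_0}^{-1}(\lambda)$ coincide off the union $D_1 \cup \cdots \cup D_N \cup D_1' \cup \cdots \cup D_N'$, so their free homotopy classes differ by a bounded sequence of elementary local moves. Combined with the fact that each such move changes the self-intersection count in a controlled way and must preserve the mapping class orbit, this should force the number of admissible configurations to be bounded by a constant $K=K(\gamma_0)$ depending only on $N$ and the topological type of $\gamma_0$.

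The main obstacle is turning the mapping-class-orbit constraint into a uniform numerical bound independent of $\lambda$. The delicate point is that while there may be many pairs of nearly parallel arcs on a long multicurve $\lambda$, only boundedly many $N$-tuples of such pairs can yield curves in the \emph{same} orbit $\Mod(S)\cdot\gamma_0$. Overcoming this requires carefully marrying the local rigidity of the cross-surgery (uniformly bounded disk diameter) with the topological rigidity provided by fixing the free homotopy type, so that a perturbation of the surgery data either leaves the resulting curve unchanged or moves it out of the orbit.
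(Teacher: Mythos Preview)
The paper does not actually prove this lemma: it is quoted from \cite[Proposition 3.9]{ES}, and the surrounding text explicitly says that this is ``the key technical difficulty of the proof (details of which will be omitted here, see \cite[Section 2.4]{ES}).'' So there is no in-paper proof to compare against; your proposal has to stand or fall on its own.

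Your setup is reasonable. It is correct that every $\gamma$ in the orbit has the same self-intersection number $N=i(\gamma_0,\gamma_0)$, and that recovering $\gamma$ from $\pi_{\gamma_0}(\gamma)=\lambda$ amounts to performing $N$ local cross-surgeries on $\lambda$, each supported in a small disk meeting $\lambda$ in two nearly parallel arcs. So far so good.

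The gap is exactly where you flag it. The number of candidate surgery disks on $\lambda$ is not bounded independently of $\lambda$: on a long multicurve there are many pairs of nearby parallel subarcs, so the raw count of admissible $N$-tuples grows with $\ell_\rho(\lambda)$. Your proposed fix is to invoke the constraint that the surgered curve must land in the single orbit $\Mod(S)\cdot\gamma_0$, but the argument you sketch does not extract a uniform bound from this. Saying that two preimages $\gamma,\gamma'$ agree off a union of $2N$ disks and hence ``differ by a bounded sequence of elementary local moves'' does not help: that only tells you the two curves are related by a bounded word in Dehn twists supported in those disks, not that there are boundedly many such curves in total. Many distinct $N$-tuples can produce genuinely distinct curves all lying in $\Mod(S)\cdot\gamma_0$ --- after all, the orbit is infinite --- and nothing in your outline rules out that arbitrarily many of them resolve to the same $\lambda$. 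The phrases ``this should force'' and ``requires carefully marrying'' are placeholders for an argument that is not there.

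To make progress you would need a mechanism that pins down, up to bounded ambiguity, \emph{where} on $\lambda$ the surgeries occurred --- for instance by showing that the resolution procedure records enough combinatorial data (e.g.\ via a carrying train track, or via marked points where the smoothings happened) that the inverse images are parametrized by a set whose size depends only on $N$ and the topology of $S$, not on $\lambda$. As written, your proposal identifies the right difficulty but does not resolve it.
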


We note that any limit point $m_{\gamma_0}$ is locally finite and $\Mod(S)$-invariant since this is true for each $m_{\gamma_0}^L$. We then use Lemma \ref{bound} to show that any limit point is also uniformly continuous with respect to the Thurston measure. To do so, we first push forward the measure $m_{\gamma_0}^L$ via $\pi_{\gamma_0}$ resulting in the following measure supported on $\ML(S)$:
 $$n_{\gamma_0}^L = \frac{1}{L^{6g-6}}\sum_{\lambda\in\ML_{\mathbb{Z}}(S)}\vert\pi_{\gamma_0}^{-1}(\lambda)\vert\delta_{\frac{1}{L}\lambda}.$$
It is not difficult to see $m_{\gamma_0}$ is a limit point of the family $(m_{\gamma_0}^L)_{L}$ if an only if it is a limit point of the family $(n_{\gamma_0}^L)_{L}$. In particular, any limit point is supported on $\ML(S)$, completing step \ref{ml}. 
 
Now, \eqref{boundeq} implies that
$$n_{\gamma_0}^L =  \frac{1}{L^{6g-6}}\sum_{\lambda\in\ML_{\mathbb{Z}}(S)}\vert\pi_{\gamma_0}^{-1}(\lambda)\vert\delta_{\frac{1}{L}\lambda} < K\cdot \frac{1}{L^{6g-6}}\sum_{\lambda\in\ML_{\mathbb{Z}}(S)}\delta_{\frac{1}{L}\lambda}$$
and the right hand side converges to a multiple of $m_{Th}$ as $L\to\infty$ (see \eqref{thurston measure}). In particular, any limit point of $(n_{\gamma_0}^L)_{L}$, and hence of $(m_{\gamma_0}^L)_{L}$, is absolutely continuous with respect to the Thurston measure, completing step \ref{abs}. 
 
Next, recall that, by a result of Masur \cite{MasurPAMS}, the Thurston measure is ergodic with respect to the action of $\Mod(S)$ on $\ML(S)$. Hence, since any limit $m_{\gamma_0}$ of $(m_{\gamma_0}^L)_{L}$ is invariant under this action and absolutely continuous with respect to the Thurston measure, the only choice for $m_{\gamma_0}$ is a positive multiple of the Thurston measure. This completes the argument for proving Proposition \ref{multiple} (and hence step \ref{step multiple}). 

Finally, we use Mirzakhani's result (Theorem \ref{Maryam}) to complete the outline of the proof of Theorem \ref{thm convergence}. We need to show that the constant $C$ in Proposition \ref{multiple} is independent of the subsequence and that $C$ is in fact equal to the constant $C_{\gamma_0}$.

Let $\rho$ be a hyperbolic metric on $S$ and $L_{\rho}$ the corresponding Liouville current. Let $\ell_{\rho}:\Curr(S)\to\mathbb{R}$ be the length function defined by 
$$\ell_{\rho}(\mu)=i(\mu, L_{\rho})$$
which agrees with the hyperbolic length on curves. Following the notation above, let
$$B_{\ell_{\rho}}=\{\mu\in\Curr(S)\,\vert\,i(\mu,L_{\rho})\leq1\}.$$
By definition, 
$$m_{\gamma_0}^{L}(B_{\ell_{\rho}}) = \frac{\#\{\gamma\in\Mod(S)\cdot\gamma_0\,\vert\,\ell_{\rho}(\gamma)\leq L\}}{L^{6g-6}}.$$
By Theorem \ref{Maryam} we know that the right hand side converges to 
$$C_{\gamma_0}\cdot m_{Th}(B_{\ell_{\rho}}).$$
In particular, $m_{\gamma_0}^L(B_{\ell_{\rho}})$ converges and by Corollary \ref{strong} it must converge to $C\cdot m_{Th}(B_{\ell_{\rho}})$ for some $C>0$. Hence we have $C=C_{\gamma_0}$, completing step \ref{stepmaryam}, and Theorem \ref{thm convergence} follows.

%\cu{This last paragraph  confuses me a lot, I still don't understand why we have to use that the sphere has measure zero, and don't really see what's going on. It looks like a circular logic to me.}

Lastly, we note tells in particular that we have the asymptotic growth
$$\#\{\gamma\in\Mod(S)\cdot\gamma_0\,\vert\,\ell(\gamma)\leq L\} \sim const\cdot L^{6g-6}$$
for any of the length functions $\ell$ discussed in section \ref{length}. In particular, it is true for the length induced by any non-positive (singular) Riemannian metric on $S$ as well as for the stable length with respect to a geodesic metric space $X$ on which $\pi_1(S)$ acts discretely and cocompactly by isometries (see Theorem \ref{stable}). However, in \cite{EPS} it is shown that it is enough for the stable length to extend to $\Curr(S)$ to conclude that the asymptotics above hold for the actual (translation) length. In particular, it holds for \emph{any} Riemannian metric on $S$.

\begin{corollary}\cite[Corollaries 1.3 and 1.4]{EPS}\label{EPStheorem}
Let $\gamma_0$ be a curve on $S$. If $\rho$ is any (possibly singular) Riemannian metric on $S$ and $\ell_{\rho}$ is the corresponding length function on curves, then 
$$ \lim_{L\to\infty}\frac{\#\{\gamma\in\Mod(S)\cdot\gamma_0\,\vert\,\ell_{\rho}(\gamma)\leq L\}}{L^{6g-6}}$$
exists and is positive. Similarly, if we replace the length function with the word length with respect to any finite generating set of $\pi_1(S)$ then the corresponding limit also exists and is positive. 
\end{corollary}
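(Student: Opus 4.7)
The plan is to reduce Corollary \ref{EPStheorem} to Theorem \ref{juanviveka} applied to stable length, and then transfer the resulting asymptotic from stable length to actual length. By Corollaries \ref{riemannian} and \ref{word}, the stable length $st_\rho$ (respectively $st_G$) extends to a continuous positive length function on $\Curr(S)$. Applying Theorem \ref{juanviveka} to this extension yields
$$\lim_{L\to\infty}\frac{\#\{\gamma\in\Mod(S)\cdot\gamma_0\,\vert\,st_\rho(\gamma)\leq L\}}{L^{6g-6}}=C_{\gamma_0}\cdot m_{st_\rho}>0,$$
and similarly with $st_G$ in place of $st_\rho$. It therefore suffices to show that replacing $st_\rho$ by $\ell_\rho$ (respectively $st_G$ by $\ell_G$) in the counting function changes the limit only by a lower-order amount.

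The technical heart of the argument will be a uniform comparison
$$0\leq \ell_\rho(\gamma)-st_\rho(\gamma)\leq K$$
for every primitive $\gamma\in\pi_1(S)$, with $K$ depending only on $\rho$. The left inequality is immediate from the subadditivity $\ell_\rho(\gamma^n)\leq n\ell_\rho(\gamma)$, obtained by concatenating $n$ copies of a shortest representative of $\gamma$. For the right inequality, I would use that the universal cover $(\tilde S,\tilde\rho)$, being quasi-isometric to the Cayley graph of the word-hyperbolic group $\pi_1(S)$ via the orbit map (\v{S}varc--Milnor), is itself a Gromov-hyperbolic proper geodesic metric space. A standard estimate in $\delta$-hyperbolic spaces then gives $|\ell_{\tilde\rho}(\gamma)-sl_{\tilde\rho}(\gamma)|\leq K$ for all loxodromic isometries, with $K=K(\delta)$; since $\ell_\rho=\ell_{\tilde\rho}$ and $st_\rho=sl_{\tilde\rho}$, this is exactly what we need. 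The word-length case follows by replacing $(\tilde S,\tilde\rho)$ by the Cayley graph, which is directly $\delta$-hyperbolic with respect to any finite generating set.

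With the comparison in hand, a sandwich argument finishes the proof. Writing
$$f(L)=\#\{\gamma\in\Mod(S)\cdot\gamma_0\,\vert\,\ell_\rho(\gamma)\leq L\},\qquad g(L)=\#\{\gamma\in\Mod(S)\cdot\gamma_0\,\vert\,st_\rho(\gamma)\leq L\},$$
the comparison gives the inclusions $\{\ell_\rho\leq L\}\subseteq\{st_\rho\leq L\}\subseteq\{\ell_\rho\leq L+K\}$, hence $f(L)\leq g(L)\leq f(L+K)$, so that
$$\frac{f(L)}{L^{6g-6}}\;\leq\;\frac{g(L)}{L^{6g-6}}\;\leq\;\left(\frac{L+K}{L}\right)^{6g-6}\cdot\frac{f(L+K)}{(L+K)^{6g-6}}.$$
Since $(L+K)^{6g-6}/L^{6g-6}\to 1$ and the middle term converges to the positive constant $C_{\gamma_0}\cdot m_{st_\rho}$, both the $\liminf$ and $\limsup$ of $f(L)/L^{6g-6}$ are forced to equal this common value; the limit therefore exists and is positive. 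The word-length statement follows by the identical argument applied to $\ell_G$ and $st_G$.

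The hardest step will be the uniform comparison $|\ell_\rho-st_\rho|\leq K$. For word metrics on $\pi_1(S)$ this is a classical consequence of hyperbolicity, but for an arbitrary (possibly singular) Riemannian metric one must first check that the universal cover is a geodesic space to which \v{S}varc--Milnor applies, deduce Gromov hyperbolicity as a quasi-isometry invariant, and only then invoke the hyperbolic-space bound on the deficit between translation length and stable length; the subtlety lies in verifying that the hypotheses of each of these steps really are satisfied in the generality allowed by the statement.
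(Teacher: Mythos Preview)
Your proposal is correct and follows precisely the route the paper indicates (citing \cite{EPS}): apply Theorem~\ref{juanviveka} to the continuous extension of stable length furnished by Corollaries~\ref{riemannian} and~\ref{word}, then transfer from stable length to actual length via the uniform bound $0\le \ell_X(\gamma)-sl_X(\gamma)\le K$ coming from Gromov hyperbolicity of $X$ (which holds by \v{S}varc--Milnor since $\pi_1(S)$ is word-hyperbolic), and conclude by the sandwich argument you wrote out. The paper does not spell out the details of the transfer step beyond the sentence preceding the corollary, but your identification of the key ingredient---the $\delta$-hyperbolic estimate bounding translation length minus stable translation length by a constant depending only on $\delta$---is exactly what is used in \cite{EPS}.
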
   

%%%%%%%%%%%%%%%%%%%%%%%%%%%%%%%%%%%%%%%%%%%%%%%%%%%%%

\subsection{Orbits of currents}

We end by remarking that one could also ask whether the limit in Theorem \ref{juanviveka} exists if we look at the $\Mod(S)$-orbit of any \emph{current} instead of a curve. Rafi-Souto proved that this is indeed the case:

\begin{theorem}\cite[Main Theorem]{RS}\label{Rafi}
Let $\ell:\Curr(S)\to\mathbb{R}$ be a continuous length function. For any filling current $\nu\in\Curr(S)$ we have
$$\lim_{L\to\infty}\frac{\#\{\mu\in\Mod(S)\cdot\nu\,\vert\,\ell(\mu)\leq L\}}{L^{6g-6}}=C_{\nu}\cdot m_{\ell}$$
where $C_{\nu}>0$ and $m_{\ell}=m_{Th}(\{\lambda\in\ML\,\vert\,\ell(\lambda)\leq1\})$.
\end{theorem}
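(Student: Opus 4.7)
The plan is to adapt the proof of Theorem \ref{thm convergence} to the present setting, replacing the orbit $\Mod(S)\cdot\gamma_0$ by the orbit $\Mod(S)\cdot\nu$ throughout. Define the family of Radon measures on $\Curr(S)$ by
$$m_\nu^L := \frac{1}{L^{6g-6}} \sum_{\mu \in \Mod(S)\cdot\nu} \delta_{\frac{1}{L}\mu}.$$
Since $\nu$ is filling, property \eqref{step compact} guarantees each $m_\nu^L$ is locally finite, and it is $\Mod(S)$-invariant by construction. Granted the convergence $m_\nu^L \to C_\nu\cdot m_{Th}$ as $L\to\infty$, the theorem follows from the Portmanteau theorem applied to the closed set $B_\ell = \{\mu \in \Curr(S) : \ell(\mu)\leq 1\}$, whose boundary has zero Thurston measure by the scaling argument used after Theorem \ref{thm convergence}.

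First, I would establish weak-$*$ precompactness of $(m_\nu^L)_L$ via a uniform bound of the form $m_\nu^L(\{\mu : i(\mu, L_\rho)\leq R\}) \leq c\, R^{6g-6}$, for a fixed hyperbolic metric $\rho$; this is a consequence of Mirzakhani-type estimates combined with the filling property. Second, any weak-$*$ limit $m_\nu$ is supported on $\ML(S)$: every $\mu \in \Mod(S)\cdot\nu$ satisfies $i(\mu,\mu) = i(\nu,\nu)$, so $i(\tfrac{1}{L}\mu, \tfrac{1}{L}\mu) = \tfrac{1}{L^2}i(\nu,\nu) \to 0$, and continuity of the intersection form forces any accumulation point of such rescaled currents to have vanishing self-intersection, hence to lie in $\ML(S)$.

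The main obstacle is showing that $m_\nu$ is absolutely continuous with respect to the Thurston measure. In the curve case, this was achieved through the map $\pi_{\gamma_0}^\epsilon$ which resolves small-angle self-intersections into a nearby simple multicurve (Theorem \ref{angle}) and satisfies the uniformly bounded-to-one estimate of Lemma \ref{bound}. No such resolution procedure is available for a general filling current. Instead, I would work in local charts given by maximal train tracks: for $\lambda \in \ML(S)$ carried by a maximal train track $\tau$, the set of currents whose supports travel along $\tau$ with weights close to those of $\lambda$ is parametrized by a box in the train-track cone $C'(\tau) \subset \RR^{6g-6}$. The task reduces to proving a uniform upper bound
$$\#\!\left\{\mu \in \Mod(S)\cdot\nu : \tfrac{1}{L}\mu \text{ lies in the box } B\subset C'(\tau)\right\} \leq K \cdot L^{6g-6} \cdot \vol(B),$$
with $K = K(\nu,\tau)$ independent of $L$ and $B$. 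Morally, this says that the rescaled orbit $\frac{1}{L}\Mod(S)\cdot\nu$ cannot cluster in $\Curr(S)$ more densely than the lattice $\ML_\ZZ(S)$ does in $\ML(S)$. Combining this with the lattice-to-Lebesgue convergence \eqref{thurston measure} then yields $m_\nu \leq K\cdot m_{Th}$, giving absolute continuity.

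Once absolute continuity is in hand, Masur's ergodicity forces any weak-$*$ limit to have the form $C \cdot m_{Th}$ for some $C \geq 0$. To show that $C$ is the same for every subsequence and equal to a positive constant $C_\nu$ depending only on $\nu$, I would compare $(m_\nu^L)_L$ to a reference family $(m_{\gamma_0}^L)_L$ for a fixed curve $\gamma_0$, for which convergence is already known by Theorem \ref{thm convergence}. Fixing a hyperbolic metric $\rho$ and applying Portmanteau with $B_{\ell_\rho}$ converts any subsequential convergence of $m_\nu^L$ into convergence of the counting function $\#\{\mu \in \Mod(S)\cdot\nu : i(\mu, L_\rho) \leq L\}/L^{6g-6}$; positivity of the limit and its independence of the subsequence then follow from approximating $\nu$ by weighted rational multicurves inside $\Curr(S)$, using the homogeneity relation $m_{a\gamma}^L = a^{6g-6} m_{\gamma}^{L/a}$ and a continuity-in-$\nu$ argument for the counting function. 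With $C_\nu$ thus identified, the identity $m_\ell = m_{Th}(\{\lambda \in \ML(S) : \ell(\lambda)\leq 1\})$ completes the proof.
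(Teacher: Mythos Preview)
Your outline correctly identifies the structure of the argument, and your observation that any weak-$*$ limit must be supported on $\ML(S)$ (via $i(\tfrac{1}{L}\mu,\tfrac{1}{L}\mu)\to 0$) is both clean and essentially the one used. The genuine gap is your Step~4. You assert a uniform clustering bound
\[
\#\bigl\{\mu \in \Mod(S)\cdot\nu : \tfrac{1}{L}\mu \in B\bigr\} \leq K \cdot L^{6g-6} \cdot \vol(B)
\]
in train-track charts, but offer only a ``moral'' justification. This is exactly the step where the analogy with the curve case breaks down: for $\gamma_0$ a curve, absolute continuity came from the surgery map $\pi_{\gamma_0}^\epsilon$ and the bounded-to-one estimate of Lemma~\ref{bound}, both of which depend on $\gamma_0$ having finitely many self-intersections to resolve. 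A general filling current has no such combinatorial description, and there is no evident mechanism preventing the rescaled orbit from concentrating on a lower-dimensional stratum of $\ML(S)$.

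The paper (following Rafi--Souto) does \emph{not} attempt to prove absolute continuity directly. Instead, once the limit $m_\nu$ is known to be a locally finite $\Mod(S)$-invariant measure on $\ML(S)$, they invoke the Lindenstrauss--Mirzakhani classification of all such measures \cite{LM}, which is a substantially deeper input than Masur's ergodicity. That classification, combined with Proposition~\ref{multiple} for the curve case, is what identifies $m_\nu$ as a multiple of $m_{Th}$. Your Step~6 (identifying $C_\nu$ by approximating $\nu$ with weighted curves and a continuity-in-$\nu$ argument) is also not clearly workable as stated: the map $\nu \mapsto C_\nu$ is not a priori continuous, and approximation of a filling current by curves does not automatically transfer the counting asymptotics. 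In short, the missing ingredient is precisely the Lindenstrauss--Mirzakhani theorem, and your proposed substitute is not a proof.
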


The constant $C_{\nu}$, as in Theorem \ref{Maryam} is independent of $\ell$ and can be written as
$$C_{\nu}=\frac{n_{\nu}}{\bold{m}_g}$$
where $\bold{m}_g$ is the same constant as in \eqref{WP}. However, in \cite{RS} the constant $n_{\nu}$, in the case when $\nu$ is filling, is also described:
$$n_{\nu}=m_{Th}(\{\lambda\in\ML\,\vert\,i(\nu,\lambda)\leq1\}).$$

The proof of Theorem \ref{Rafi} follows a similar logic to the proof of Theorem \ref{juanviveka} above. However, in order to generalize Proposition \ref{multiple} to hold also when $\gamma_0$ is a filling current, they combine this proposition together with a deep result of Lindenstrauss-Mirzakhani \cite{LM} about the classifications of invariant measures on $\ML(S)$.

We note that Theorem \ref{Rafi} holds also for surfaces with boundary, as do Theorems \ref{Maryam} and \ref{juanviveka}, but unlike the latter two which also work for surfaces with cusps, Theorem \ref{Rafi} requires $S$ to be compact (or alternatively, that $\nu$ has compact support). 

As an application to Theorem \ref{Rafi}, Rafi-Souto prove the asymptotic growth of lattice points in Teichm\"{u}ller space with respect to the Thurston metric. As before, for a length function $f:\Curr(S)\to\mathbb{R}$ we let $m_f$ denote the constant
$$m_f=m_{Th}(\{\lambda\in\ML(S)\,\vert\,f(\lambda)\leq1\})$$
and we let $m_X$ denote the corresponding constant when $f=\ell_X$, the hyperbolic length on $X\in\Teich(S)$. 

\begin{theorem}\cite[Theorem 1.1]{RS}\label{Rafi2}
Let $X, Y\in\Teich(S)$. Then 
$$\lim_{R\to\infty}\frac{\#\{\varphi\in\Mod(S)\,\vert\,d_{Th}(X,\varphi(Y))\leq R\}}{e^{(6g-6)R}}=\frac{m_{D_X}m_Y}{\bold{m}_g}$$
where $d_{Th}$ denotes the Thurston metric on $\Teich(S)$, $\bold{m}_g$ is as above, and $$D_X(\mu)=\max_{\lambda\in\ML(S)}\frac{i(\lambda,\mu)}{\ell_X(\lambda)}.$$
\end{theorem}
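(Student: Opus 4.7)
The plan is to reduce Theorem \ref{Rafi2} to Theorem \ref{Rafi} by translating the Thurston-metric ball condition into a length-function sublevel set on the $\Mod(S)$-orbit of the Liouville current of $Y$.

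By Thurston's formula for the asymmetric metric, $d_{Th}(X,Y)=\log\sup_{\lambda\in\ML(S)}\ell_Y(\lambda)/\ell_X(\lambda)$. Using the Liouville embedding $\Teich(S)\hookrightarrow\Curr(S)$ of Theorem \ref{Bonahon length}, which satisfies $\ell_Y(\lambda)=i(\lambda,L_Y)$ and is $\Mod(S)$-equivariant so that $L_{\varphi(Y)}=\varphi L_Y$, the distance becomes
$$d_{Th}(X,\varphi(Y))=\log D_X(\varphi L_Y).$$
Hence
$$\#\{\varphi\in\Mod(S)\,\vert\,d_{Th}(X,\varphi(Y))\leq R\}=\#\{\mu\in\Mod(S)\cdot L_Y\,\vert\,D_X(\mu)\leq e^R\},$$
where we identify $\varphi$ with $\varphi L_Y$ (a bijection up to the finite stabilizer of $Y$, which is trivial for generic $Y$ and in any case does not affect the leading asymptotic). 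Setting $L=e^R$ and applying Theorem \ref{Rafi} with the filling current $\nu=L_Y$ and length function $\ell=D_X$, the right-hand side is asymptotic to $C_{L_Y}\cdot m_{D_X}\cdot e^{(6g-6)R}$. The constant is identified via the Rafi--Souto formula for $n_\nu$ with filling $\nu$:
$$C_{L_Y}=\frac{n_{L_Y}}{\mathbf{m}_g}=\frac{m_{Th}(\{\lambda:i(L_Y,\lambda)\leq 1\})}{\mathbf{m}_g}=\frac{m_{Th}(\{\lambda:\ell_Y(\lambda)\leq 1\})}{\mathbf{m}_g}=\frac{m_Y}{\mathbf{m}_g},$$
which gives exactly the claimed value $m_{D_X}\cdot m_Y/\mathbf{m}_g$.

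The main technical obstacle is verifying that $D_X\colon\Curr(S)\to\mathbb{R}$ is a continuous length function, which is the hypothesis needed for Theorem \ref{Rafi}. Homogeneity is immediate from the definition, and positivity follows from $L_X$ being filling (so $\ell_X(\lambda)>0$ on $\ML(S)\setminus\{0\}$) combined with the fact that for any nonzero $\mu\in\Curr(S)$ there is some $\lambda\in\ML(S)$ with $i(\lambda,\mu)>0$. The crucial content is continuity. By homogeneity one rewrites
$$D_X(\mu)=\max\{i(\lambda,\mu):\lambda\in\ML(S),\ \ell_X(\lambda)=1\}.$$
The set $\{\lambda\in\ML(S):\ell_X(\lambda)=1\}$ is compact, as a closed subset of $\{\lambda\in\Curr(S):i(L_X,\lambda)\leq 1\}$, which is compact by property (4) of the intersection form since $L_X$ is filling. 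Since $(\lambda,\mu)\mapsto i(\lambda,\mu)$ is jointly continuous by Bonahon's theorem, a standard compactness argument (both upper and lower semicontinuity) shows that this maximum over a compact parameter space is continuous in $\mu$. Once this is established, the reduction above delivers Theorem \ref{Rafi2} directly from Theorem \ref{Rafi}.
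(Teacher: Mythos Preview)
The paper does not give its own proof of Theorem~\ref{Rafi2}; it is stated purely as a cited result from \cite{RS} (``As an application to Theorem~\ref{Rafi}, Rafi--Souto prove\ldots''), with no argument supplied. So there is nothing in the paper to compare your proposal against.

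That said, your reduction is the natural one and is essentially how \cite{RS} derives the lattice-point count from their main theorem: rewrite Thurston distance via $d_{Th}(X,\varphi(Y))=\log D_X(\varphi L_Y)$, observe that $D_X$ is a continuous length function on $\Curr(S)$ (your compactness argument for the unit sphere $\{\ell_X=1\}\subset\ML(S)$ is correct), and apply Theorem~\ref{Rafi} to the orbit of the filling current $L_Y$ with $L=e^R$. The identification of the constant via $n_{L_Y}=m_Y$ is exactly right.

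One small point to tighten: your parenthetical that the finite stabilizer of $Y$ ``does not affect the leading asymptotic'' is not quite accurate as stated. If $\mathrm{Stab}(Y)$ has order $k>1$, the map $\varphi\mapsto\varphi L_Y$ is $k$-to-$1$, which multiplies the \emph{constant} (not just lower-order terms) by $k$. In \cite{RS} this is absorbed into the precise normalization of $C_\nu$; you should either say so, or restrict to generic $Y$ and then extend by continuity of both sides in $Y$.
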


The analogous result of Theorem \ref{Rafi2} when the Thurston metric is replaced by the Teichm\"{u}ller metric was proved, using different methods, by Athreya-Bufetov-Eskin-Mirzakhani in \cite{ABEM}.

%%%%%%%%%%%%%%%%%%%%%%%%%%%%%%%%%%%%%%%%%%%
%%%%%%%%%%%%%%%%%%%%%%%%%%%%%%%%%%%%%%%%%%%

%\begin{lemma} Let $f:S\to S$ be a pseudo-Anosov map on a closed hyperbolic surface and $\mu_{+}$ and $\mu_{-}$ be the corresponding stable and unstable laminations for $f$. Then,
%\[
%i(\mu_{\pm},\mu)=0\ \text{if and only if}\ \mu=k\mu_{\pm}
%\] for some $k\ge0$.  
%\end{lemma}

%\begin{proof} The proof follows from a geometric argument using the structure of stable and unstable laminations for a pseudo-Anosov element due to Thurston \cite{Th, CB}.  We refer reader to proof of \cite[Proposition 3.1]{UyaNSD} for the case of non-closed surfaces. 

%\end{proof}

%\newpage

\section{Dynamics of pseudo-Anosov homeomorphisms}\label{NSDsection}

The purpose of this section is to give a concise proof of a folklore result using Bonahon's intersection function on the space of currents: pseudo-Anosov homeomorphisms of closed hyperbolic surfaces act on the space of projective geodesic currents with uniform north-south dynamics. %(see Theorem \ref{NSD} below). 

\begin{theorem}\label{NSD} Let $S$ be closed hyperbolic surface and $\varphi:S\to S$ be a pseudo-Anosov homeomorphism. Then $\varphi$ acts on the space of projective geodesic currents $\PCurr(S)$ with uniform north-south dynamics: The action of 
$\varphi$ on $\PCurr(S)$ has exactly two fixed points $[\lambda_{+}]$ and $[\lambda_{-}]$ and for any open neighborhood $U_{\pm}$ of $[\lambda_{\pm}]$ and a compact set $K_{\pm}\subset \PCurr(S)\setminus[\lambda_{\mp}]$, there exist an exponent $M\ge1$ such that $\varphi^{\pm n}(K_{\pm})\subset U_{\pm}$ for all $n\ge M$. 

\end{theorem}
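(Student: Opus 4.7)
The plan is to combine continuity and bilinearity of Bonahon's intersection form with the eigenequations $\varphi(\lambda_{\pm})=\alpha^{\pm 1}\lambda_{\pm}$ and the following classical fact, which I would take as a black box: for a pseudo-Anosov stable or unstable lamination $\lambda$, the vanishing locus $\{\mu\in\Curr(S):i(\lambda,\mu)=0\}$ equals $\mathbb{R}_{\geq 0}\cdot\lambda$ (this uses minimality, filling, and unique ergodicity of $\lambda$). All the remaining work is a one-variable spectral computation plus a compactness argument.

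\textit{Identifying the fixed points.} Evidently $[\lambda_{+}]$ and $[\lambda_{-}]$ are fixed. Conversely, suppose $\varphi(\mu)=c\mu$ for some $c>0$ and $0\neq\mu\in\Curr(S)$. Using $\Mod(S)$-invariance of $i$,
\[
c\cdot i(\mu,\lambda_{\pm}) \;=\; i(\varphi(\mu),\lambda_{\pm}) \;=\; i(\mu,\varphi^{-1}(\lambda_{\pm})) \;=\; \alpha^{\mp 1}\,i(\mu,\lambda_{\pm}).
\]
Since $\lambda_{+}+\lambda_{-}$ is a filling current (the pair is transverse and fills $S$), the two intersections $i(\mu,\lambda_{+})$ and $i(\mu,\lambda_{-})$ cannot both vanish. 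Hence $c\in\{\alpha,\alpha^{-1}\}$ and exactly one of them is zero, so the vanishing lemma forces $[\mu]\in\{[\lambda_{+}],[\lambda_{-}]\}$.

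\textit{Single-orbit convergence.} Set $\nu_{0}:=\lambda_{+}+\lambda_{-}$ and let $\mu$ satisfy $[\mu]\neq[\lambda_{-}]$, so $i(\lambda_{-},\mu)>0$ by the vanishing lemma. Writing $F_{n}(\mu):=\varphi^{n}(\mu)/i(\nu_{0},\varphi^{n}(\mu))$ with
$i(\nu_{0},\varphi^{n}(\mu))=\alpha^{-n}i(\lambda_{+},\mu)+\alpha^{n}i(\lambda_{-},\mu)$, one computes
\[
i(\lambda_{+},F_{n}(\mu)) \;=\; \frac{\alpha^{-2n}\,i(\lambda_{+},\mu)}{\alpha^{-2n}\,i(\lambda_{+},\mu)+i(\lambda_{-},\mu)} \;\xrightarrow{\,n\to\infty\,}\; 0.
\]
Any subsequential limit $\eta$ of $F_{n}(\mu)$ lives in the compact slice $\{\xi:i(\nu_{0},\xi)=1\}$ and, by continuity of $i$, satisfies $i(\lambda_{+},\eta)=0$. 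The vanishing lemma then gives $\eta\in\mathbb{R}_{\geq 0}\lambda_{+}$; the normalization pins $\eta=\lambda_{+}/i(\lambda_{+},\lambda_{-})$, proving $[F_{n}(\mu)]\to[\lambda_{+}]$ in $\PCurr(S)$.

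\textit{Uniformity via compactness.} Suppose the statement fails for the positive iterates: there exist a compact $K_{+}\subset\PCurr(S)\setminus\{[\lambda_{-}]\}$, an open $U_{+}\ni[\lambda_{+}]$, integers $n_{k}\to\infty$, and $[\mu_{k}]\in K_{+}$ with $\varphi^{n_{k}}([\mu_{k}])\notin U_{+}$. Passing to subsequences, $[\mu_{k}]\to[\mu_{\infty}]\in K_{+}$ and $\varphi^{n_{k}}([\mu_{k}])\to[\eta]\notin U_{+}$; choose representatives with $i(\nu_{0},\mu_{k})=1$ so that $\mu_{k}\to\mu_{\infty}$ in $\Curr(S)$. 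Since $[\mu_{\infty}]\neq[\lambda_{-}]$, the vanishing lemma gives $i(\lambda_{-},\mu_{\infty})>0$, while $i(\lambda_{\pm},\mu_{k})$ stay bounded by continuity. Running the display above with $\mu_{k}$ in place of $\mu$ yields $i(\lambda_{+},F_{n_{k}}(\mu_{k}))\to 0$, which by the same argument forces $[\eta]=[\lambda_{+}]\in U_{+}$, contradiction. Negative iterates are handled by applying the same argument to $\varphi^{-1}$, whose stable and unstable laminations are $\lambda_{-}$ and $\lambda_{+}$ respectively.

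\textit{Main obstacle.} The substantive technical input is the vanishing lemma characterizing $i(\lambda_{\pm},\cdot)^{-1}(0)$, which sits behind both the uniqueness of fixed points and the step $i(\lambda_{+},\eta)=0 \Rightarrow \eta\in\mathbb{R}_{\geq 0}\lambda_{+}$; everything else reduces to the explicit eigenvalue computation and the compactness of $\PCurr(S)$ coming from $\lambda_{+}+\lambda_{-}$ being a filling current.
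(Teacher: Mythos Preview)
Your proof is correct and rests on the same core ingredients as the paper: the vanishing lemma $i(\lambda_{\pm},\mu)=0 \Leftrightarrow \mu\in\mathbb{R}_{\geq 0}\lambda_{\pm}$ (the paper's Lemma~\ref{ergodic}, which you black-box), the eigenvalue equations, and compactness of $\PCurr(S)$. The differences are in execution rather than strategy. The paper fixes an \emph{arbitrary} filling current $\nu$, defines $J_{\pm}([\mu])=i(\mu,\lambda_{\pm})/i(\mu,\nu)$, and obtains uniformity directly by proving $J_{+}(\varphi^{n}[\mu])\leq DC\alpha^{-2n}$ for all $[\mu]\in K$; this requires an auxiliary lemma bounding $i(\varphi^{n}(\mu),\nu)$ from below. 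Your choice $\nu_{0}=\lambda_{+}+\lambda_{-}$ is a tidy simplification: it makes $i(\nu_{0},\varphi^{n}(\mu))=\alpha^{-n}i(\lambda_{+},\mu)+\alpha^{n}i(\lambda_{-},\mu)$ explicit, collapsing the paper's two lemmas into a single closed-form computation. In exchange, you establish uniformity by a sequential contradiction argument rather than by an explicit estimate; both routes are standard and neither is deeper. You also handle the classification of fixed points explicitly via the eigenvalue dichotomy, whereas the paper leaves this implicit (it follows a posteriori from the north-south dynamics itself).
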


The idea of the proof is as follows: The set of non-zero currents that have zero intersection with the \emph{stable} current/lamination is precisely the positive scalar multiples of the \emph{stable} current/lamination. Similarly, the set of non-zero currents that has zero intersection with the \emph{unstable} current/lamination is precisely the positive scalar multiples of the \emph{unstable} current/lamination, see Lemma \ref{ergodic}.  

Using Lemma \ref{ergodic} we define continuous functions $J_{+}$ and $J_{-}$ on the space of projective currents which take the value $0$ only on $[\lambda_{+}]$ and [$\lambda_{-}]$ respectively. We then use these functions to construct neighborhoods of $[\lambda_{+}]$ and [$\lambda_{-}]$ and use the properties of intersection function to get convergence estimates. 
 
The proof we present here is motivated by Ivanov's proof of north-south dynamics in the setting of projective measured laminations \cite{Iva}, and consists of putting together a series of lemmas, which we first state and prove.

%The proof of Theorem \ref{NSD} consists of putting together a series of lemmas, which we first state and prove. 

\begin{lemma}\label{ergodic} Let $\varphi:S\to S$ be a pseudo-Anosov homeomorphism on a closed hyperbolic surface and $\lambda_{+}$ and $\lambda_{-}$ be the corresponding stable and unstable laminations for $\varphi$. Then,
\[
i(\lambda_{\pm},\mu)=0\ \text{if and only if}\ \mu=k\lambda_{\pm}
\] for some $k\ge0$.  
\end{lemma}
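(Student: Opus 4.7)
My plan is to handle the easy direction by bilinearity and the converse by a support argument combined with unique ergodicity of the pseudo-Anosov stable lamination. The easy direction is immediate: since $\lambda_{\pm}\in\ML(S)$, the facts recalled in Section~\ref{intnumber} give $i(\lambda_{\pm},\lambda_{\pm})=0$, and bilinearity then yields $i(\lambda_{\pm},k\lambda_{\pm})=k\,i(\lambda_{\pm},\lambda_{\pm})=0$ for $k\ge 0$.

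For the converse, assume $i(\lambda_+,\mu)=0$. My first goal is to show that $\mathrm{supp}(\mu)\subseteq\mathrm{supp}(\lambda_+)$ as subsets of $\calG(\tilde S)$. Suppose for contradiction that some $g\in\mathrm{supp}(\mu)$ is transverse to some leaf $\ell\in\mathrm{supp}(\lambda_+)$. Transversality (equivalently, linking on $S^1_\infty$) is an open condition on $\calG(\tilde S)\times\calG(\tilde S)$, so there exist open neighborhoods $\ell\in U$ and $g\in V$ with $U\times V\subset\calG^{\pitchfork}(\tilde S)$. The $\pi_1(S)$-action on $\calG^{\pitchfork}(\tilde S)$ is free and properly discontinuous, so after further shrinking $U$ and $V$ I can arrange that $U\times V$ injects into $\calG^{\pitchfork}(S)$ under the covering map. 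By definition of support, $\lambda_+(U)>0$ and $\mu(V)>0$, so the local pushforward of $\lambda_+\times\mu$ places strictly positive mass on the image of $U\times V$ in $\calG^{\pitchfork}(S)$; this forces $i(\lambda_+,\mu)>0$, a contradiction.

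Hence no geodesic in $\mathrm{supp}(\mu)$ meets a leaf of $\lambda_+$ transversely. Since $\lambda_+$ is the stable lamination of a pseudo-Anosov on a closed surface, its complementary regions in $S$ are finite-sided ideal polygons, and any complete geodesic contained in the closure of such a region must be one of its boundary leaves. Thus every geodesic in $\mathrm{supp}(\mu)$ is a leaf of $\lambda_+$, so $\mu$ corresponds to a transverse measure on $\lambda_+$. I then invoke the unique ergodicity of the pseudo-Anosov stable lamination: it carries a unique transverse measure up to positive scaling, so $\mu=k\lambda_+$ for some $k\ge 0$. The argument for $\lambda_-$ is identical, using the unstable lamination in place of the stable one.

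The step I expect to be the main obstacle is the pointwise support argument: translating the integral condition $i(\lambda_+,\mu)=0$ into a statement about $\mathrm{supp}(\mu)$ requires carefully working on $\calG^{\pitchfork}(\tilde S)$ and tracking how the local pushforward to $\calG^{\pitchfork}(S)$ relates the product measure $\lambda_+\times\mu$ to the intersection form. Proper discontinuity of the $\pi_1(S)$-action on $\calG^{\pitchfork}(\tilde S)$ is the essential ingredient that makes $U\times V$ inject into the quotient; once this containment of supports is established, the remaining structural step (complementary regions of $\lambda_+$ are ideal polygons) and the classical unique ergodicity of the pseudo-Anosov invariant laminations close out the proof.
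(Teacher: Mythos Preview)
Your easy direction is fine (and in fact a bit cleaner than the paper's, which instead uses $\Mod(S)$-invariance of $i$ together with the stretch factor to force $i(\lambda_+,\lambda_+)=0$). Your support argument for the converse --- that $i(\lambda_+,\mu)=0$ forces every geodesic in $\mathrm{supp}(\mu)$ to be disjoint from the leaves of $\lambda_+$ --- is also correct and is exactly the mechanism the paper uses.

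The gap is in the next sentence. Your claim that ``any complete geodesic contained in the closure of such a region must be one of its boundary leaves'' is false whenever a complementary region is an ideal $n$-gon with $n\ge 4$: the diagonals of such a polygon are complete geodesics lying inside the polygon, asymptotic at both ends to boundary leaves at the ideal vertices, and hence cross no leaf of $\lambda_+$ transversely. Pseudo-Anosov stable laminations do in general have complementary polygons with more than three sides (these correspond to higher-order prong singularities of the invariant foliation), so this case cannot be ignored. The paper explicitly records this dichotomy --- a geodesic in $\mathrm{supp}(\mu)$ is either a leaf of $\lambda_+$ or such a diagonal --- and then rules out the diagonal case by arguing that an isolated atom on a diagonal cannot occur in a locally finite $\pi_1(S)$-invariant measure on $\calG(\tilde S)$. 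You need to supply that argument (or otherwise exclude the diagonals) before invoking unique ergodicity; once you do, the rest of your proof goes through.
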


\begin{proof} Here we give a brief idea of the proof and refer the reader to proof of \cite[Proposition 3.1]{UyaNSD} for details in the case of non-closed surfaces, where the proof is more involved.  Let $\lambda_{+}$ be the stable lamination on $S$ corresponding to the pseudo-Anosov homeomorphism $f$. The proof for $\lambda_{-}$ is almost  identical. 

We first prove the easy direction of the statement. Namely, let $\mu=k\lambda_{+}$, and $\alpha>1$ be such that $\varphi(\lambda_{+})=\alpha \lambda_{+}$. Then, by properties of the intersection number we have
\begin{align*} 
i(k\lambda_{+},\lambda_{+})=i(\varphi^{n}(k\lambda_{+}),\varphi^{n}(\lambda_{+}))&\\
&= i(\alpha^nk\lambda_+,\alpha^n\lambda_+)\\
&=\alpha^{2n}i(k\lambda_{+},\lambda_{+}) \\
\end{align*} 
which implies $i(k\lambda_{+},\lambda_{+})=0$. 

For the forward implication, we first cut the surface along the leaves of the stable lamination. The complementary regions are finite sided ideal polygons, \cite[Proposition 5.3]{CB}. Let $\mu$ be any current such that $i(\mu,\lambda_{+})=0$. Let $\ell$ be any leaf in the support of $\mu$, since the projection of this leaf onto the surface cannot intersect the leaves of the stable lamination transversely, there are two possibilities for this projection. Either $\ell$ projects onto a leaf of the lamination $\lambda$ or it is a complete geodesic that is asymptotic to two different sides of a complementary polygon. In the second case, this leaf cannot support any measure, otherwise the corresponding current would not be locally finite. Hence $\mu$ and $\lambda_{+}$ have the same support, and unique ergodicity of $\lambda_{+}$ implies that $\mu=k\lambda_{+}$. 

\end{proof}

Fix a filling current $\nu$ on $S$, and consider the following two functions $J_{+},J_{-}:\PCurr(S)\to\RR_{\ge0}$
defined by 

\[
J_{+}([\mu])=\frac{i(\mu,\lambda_{+})}{i(\mu,\nu)}\ ,\ \  J_{-}[\mu]=\frac{i(\mu,\lambda_{-})}{i(\mu,\nu)}
\]
where $\mu$ is any representative of $[\mu]$. Note that $J_{+},J_{-}$ are well defined and continuous since the intersection function is continuous and homogeneous, and the  denominator is non-zero by the choice of $\nu$. 
\begin{lemma}\label{south}
Let $\alpha$ be the stretch factor for the pseudo-Anosov element $\varphi$ and let $\nu$ be a filling current. If $K$ is a compact set in $\PCurr(S)\setminus [\lambda_{-}]$, then there exist $C>0$ such that 
\[
\frac{1}{i(\varphi^{n}(\mu),\nu)}\leq\frac{C}{\alpha^{n}i(\mu,\nu)}
\]
for all $\mu$ such that $[\mu]\in K$. \

\end{lemma}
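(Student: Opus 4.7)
The plan is to bound $i(\varphi^n(\mu),\nu)$ below by a constant multiple of $\alpha^n \, i(\mu,\nu)$ by inserting $\lambda_-$ as an intermediary. The key observation is that $\lambda_-$ is well-behaved under $\varphi^{-n}$: from $\varphi(\lambda_-)=\alpha^{-1}\lambda_-$ we get $\varphi^{-n}(\lambda_-)=\alpha^n\lambda_-$, and so $\Mod(S)$-invariance and bilinearity of the intersection form give
\[
i(\varphi^n(\mu),\lambda_-) \;=\; i(\mu,\varphi^{-n}(\lambda_-)) \;=\; \alpha^n\, i(\mu,\lambda_-).
\]
This is the only step in which the stretch factor enters.

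Next I would use the auxiliary function $J_-\colon \PCurr(S)\to\RR_{\geq 0}$ introduced just before the lemma in order to swap $\nu$ and $\lambda_-$. Since $J_-$ is continuous on the compact space $\PCurr(S)$, it is uniformly bounded above by some $M>0$, which means $i(\beta,\lambda_-)\leq M\cdot i(\beta,\nu)$ for every non-zero current $\beta$. Applying this with $\beta=\varphi^n(\mu)$ yields
\[
i(\varphi^n(\mu),\nu) \;\geq\; \tfrac{1}{M}\, i(\varphi^n(\mu),\lambda_-) \;=\; \tfrac{\alpha^n}{M}\, i(\mu,\lambda_-).
\]

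On the other hand, Lemma~\ref{ergodic} says that $J_-$ vanishes precisely at $[\lambda_-]$, so $J_-$ is continuous and strictly positive on the compact set $K\subset \PCurr(S)\setminus [\lambda_-]$; compactness then produces a uniform lower bound $c>0$ with $J_-([\mu])\geq c$ for all $[\mu]\in K$, i.e.\ $i(\mu,\lambda_-)\geq c\cdot i(\mu,\nu)$. Chaining this with the previous estimate gives
\[
i(\varphi^n(\mu),\nu) \;\geq\; \frac{c\,\alpha^n}{M}\, i(\mu,\nu),
\]
and taking reciprocals yields the conclusion with $C=M/c$.

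The only genuine idea is recognizing that $J_-$ needs to be invoked twice, in opposite directions: compactness of the entire projective space $\PCurr(S)$ supplies the \emph{global} upper bound $M$ (letting us trade $\nu$ for $\lambda_-$ on the $\varphi^n(\mu)$ side, where we have no control), whereas compactness of $K$ together with Lemma~\ref{ergodic} supplies the \emph{local} positive lower bound $c$ (letting us trade $\lambda_-$ back for $\nu$ on the $\mu$ side, where we do). Once this symmetry is spotted, everything else is immediate from $\Mod(S)$-invariance of $i$.
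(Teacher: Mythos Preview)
Your proof is correct and is essentially identical to the paper's own argument: both use the global upper bound on $J_-$ coming from compactness of $\PCurr(S)$ to pass from $\nu$ to $\lambda_-$ on the $\varphi^n(\mu)$ side, the positive lower bound on $J_-$ over $K$ (via Lemma~\ref{ergodic}) to pass back from $\lambda_-$ to $\nu$ on the $\mu$ side, and the $\Mod(S)$-invariance of $i$ together with $\varphi^{-n}(\lambda_-)=\alpha^n\lambda_-$ to produce the factor $\alpha^n$. Your constants $M,c$ correspond exactly to the paper's $C_1,C_2$, with $C=M/c=C_1/C_2$.
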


\begin{proof}
Since $\PCurr(S)$ is compact, there exist $0<C_1<\infty$ such that 
\[
J_{-}([\mu])=\frac{i(\mu,\lambda_{-})}{i(\mu,\nu)}\leq C_{1}
\]
i.e. 
\[
i(\mu,\lambda_{-})\leq C_{1}i(\mu,\nu)
\]
for all nonzero $\mu\in \Curr(S)$.  

Furthermore, by Lemma \ref{ergodic} the quantity $i(\mu,\lambda_{-})$ is non-zero for any $\mu$ such that $[\mu]\in\PCurr(S)\setminus [\lambda_{-}]$. Therefore, by compactness of $K$, there exist $C_2>0$ such that  
\[
i(\mu,\lambda_{-})\geq C_{2}i(\mu,\nu)
\]
for all $\mu$ with $[\mu]\in K$.

From these two inequalities we obtain, for all $\mu$ such that $[\mu]\in K$,

\begin{align*} 
i(\varphi^{n}(\mu),\nu) \ge \dfrac{1}{C_1}i(\varphi^{n}(\mu),\lambda_{-})&=  \dfrac{1}{C_1}i(\mu, \varphi^{-n}(\lambda_{-}))\\
&= \dfrac{1}{C_1}i(\mu, \alpha^{n}\lambda_{-})\\
&=\dfrac{1}{C_1}\alpha^{n}i(\mu, \lambda_{-})\\
&\ge \dfrac{C_2}{C_1}\alpha^{n}i(\mu,\nu).
\end{align*}

Setting $C=\dfrac{C_{1}}{C_{2}}$, the conclusion of the lemma follows. 
%we have $\dfrac{1}{i(f^{n}(\lambda),\nu)}\leq\dfrac{C}{\alpha^{n}i(\\lambda,\nu)}$ for all $\lambda$ such that $[\lambda]\inK$  hence the lemma follows. 

\end{proof}

\begin{lemma}\label{north}
Let $U$ be an open neighborhood of $[\lambda_{+}]$ and $K$ be a compact
set in $\PCurr(S)\setminus[\lambda_{-}]$. There exist $M_1>0$ such that 
\[
\varphi^{n}(K)\subset U
\]
for all $n\ge M_1$.  
\end{lemma}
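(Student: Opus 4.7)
The plan is to show that $J_{+}(\varphi^{n}[\mu]) \to 0$ uniformly on $K$, and to translate this into the desired inclusion using the fact that $J_{+}^{-1}(0) = \{[\lambda_{+}]\}$ together with Lemma~\ref{ergodic}.

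First I would produce $\epsilon > 0$ controlling the neighborhood $U$. Since $\PCurr(S) \setminus U$ is closed inside the compact space $\PCurr(S)$, it is compact. By Lemma~\ref{ergodic}, $J_{+}$ vanishes only at $[\lambda_{+}]$, which lies in $U$. Hence $J_{+}$ achieves a positive minimum $\epsilon > 0$ on $\PCurr(S)\setminus U$, so that
\[
J_{+}([\mu]) < \epsilon \;\Longrightarrow\; [\mu] \in U.
\]

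Next I would compute how $J_{+}$ transforms under $\varphi^{n}$. Using $\Mod(S)$-invariance of the intersection form and $\varphi^{-n}(\lambda_{+}) = \alpha^{-n}\lambda_{+}$, the numerator of $J_{+}(\varphi^{n}[\mu])$ becomes
\[
i(\varphi^{n}(\mu), \lambda_{+}) = i(\mu, \varphi^{-n}(\lambda_{+})) = \alpha^{-n}\, i(\mu, \lambda_{+}).
\]
For the denominator, Lemma~\ref{south} (applied to $K$, which sits in $\PCurr(S) \setminus [\lambda_{-}]$) provides a constant $C > 0$ so that $i(\varphi^{n}(\mu), \nu)^{-1} \leq C\alpha^{-n} i(\mu,\nu)^{-1}$ for all $\mu$ with $[\mu] \in K$. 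Multiplying these two estimates gives
\[
J_{+}(\varphi^{n}[\mu]) \;=\; \frac{i(\varphi^{n}(\mu), \lambda_{+})}{i(\varphi^{n}(\mu), \nu)} \;\leq\; C\,\alpha^{-2n}\, J_{+}([\mu]).
\]

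Now I would invoke compactness of $K$ once more: by continuity of $J_{+}$, there is some $M_{0} > 0$ with $J_{+}([\mu]) \leq M_{0}$ for every $[\mu] \in K$. Consequently
\[
J_{+}(\varphi^{n}[\mu]) \;\leq\; C M_{0}\,\alpha^{-2n}
\]
uniformly on $K$. Since $\alpha > 1$, I can choose $M_{1}$ large enough that $CM_{0}\alpha^{-2n} < \epsilon$ for all $n \geq M_{1}$, which forces $\varphi^{n}[\mu] \in U$ for all $[\mu]\in K$ and $n \geq M_{1}$.

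The only place requiring care is the extraction of $\epsilon$: I want to emphasize that it relies on the compactness of $\PCurr(S)$ (already recorded after property (\ref{step compact}) in Section~\ref{intnumber}) together with Lemma~\ref{ergodic} identifying the zero set of $J_{+}$; without these two inputs one could not turn the pointwise decay into a uniform statement on $K$. Everything else is a direct manipulation of the intersection form combined with Lemma~\ref{south}.
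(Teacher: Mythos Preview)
Your proof is correct and follows essentially the same approach as the paper's: extract $\epsilon$ from the positive minimum of $J_{+}$ on $\PCurr(S)\setminus U$, rewrite the numerator via $\varphi^{-n}(\lambda_{+})=\alpha^{-n}\lambda_{+}$, control the denominator with Lemma~\ref{south}, and combine to get $J_{+}(\varphi^{n}[\mu])\le C'\alpha^{-2n}$. The only cosmetic difference is that the paper bounds $J_{+}$ globally on all of $\PCurr(S)$ (using compactness of $\PCurr(S)$) rather than just on $K$, yielding a constant $D$ in place of your $M_{0}$; either bound suffices.
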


\begin{proof}

Since $i(\mu,\lambda_{+})=0$ iff $[\mu]=[\lambda_{+}]$, and $\PCurr(S)\setminus U$ is compact, the function $J_{+}([\mu])$ has a positive absolute minimum on the set $\PCurr(S)\setminus U$, say $\epsilon>0$. Therefore, it suffices to prove that $J_{+}(\varphi^{n}[\mu])<\epsilon$ for all $[\mu]\in K$, and for all large $n$ in order to obtain the conclusion of the lemma.

On the other hand, $\PCurr(S)$ is compact, so the function $J_{+}([\mu])$
has an upper bound, i.e. there exists $0<D<\infty$ such that
\[
\frac{i(\mu,\lambda_{+})}{i(\mu,\nu)}\leq D
\]
for all $\mu$. 

Let $\epsilon>0$ be as above, and choose $M_1>0$ such that $\dfrac{DC}{\alpha^{2M_1}}<\epsilon$ where $\alpha$ is the stretch factor of $\varphi$ and $C$ is the constant given by Lemma \ref{south}. Then, for all $[\mu]\in K$ we have 

\begin{align*} 
J_{+}(\varphi^{n}[\mu])=\dfrac{i(\varphi^{n}(\mu), \lambda_{+})}{i(\varphi^{n}(\mu),\nu)}&=\dfrac{i(\mu, \varphi^{-n}(\lambda_{+}))}{i(\varphi^{n}(\mu),\nu)}\\
&= \dfrac{\alpha^{-n}i(\mu, \lambda_{+})}{i(\varphi^{n}(\mu),\nu)}\\
&\le \dfrac{C \alpha^{-n}i(\mu,\lambda_{+})}{\alpha^{n}i(\mu,\nu)}\\
& \le \dfrac{DC}{\alpha^{2n}}<\epsilon
\end{align*} 
for all $n\ge M_1$. 

\end{proof}

We are now ready to prove the theorem: 

\begin{proof}[Proof of Theorem \ref{NSD}] Using an argument symmetric to the one in the proof of Lemma \ref{north}, we can show that 
given any compact set $ K\subset \PCurr(S)\setminus [\lambda_{-}]$ and an open neighborhood $U$ of $[\lambda_{-}]$ there exist $M_2>0$ such that $\varphi^{-n}(K)\subset U$ for all $n\ge M_2$. The theorem now follows by setting $M=\max\{M_1,M_2\}$. 

\end{proof}

In fact, we have much more precise information in terms of pointwise dynamics: 

\begin{theorem}\label{pointwise} Let $\alpha>1$ be the stretch factor for $\varphi$. Then, for any $[\mu]\neq[\lambda_{-}]$, 
\[
\lim_{n\to\infty}\alpha^{-n}\varphi^{n}(\mu)=c_{\mu}\lambda_{+}
\] for some $c_\mu>0$, 
and for any $[\mu']\neq[\lambda_{+}]$
\[
\lim_{n\to\infty}\alpha^{-n}\varphi^{-n}(\mu')=c_{\mu'}\lambda_{-}
\]
for some $c_{\mu'}>0$. 
\end{theorem}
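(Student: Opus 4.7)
The plan is to prove the first limit; the second follows by applying the same argument to $\varphi^{-1}$, which has the same stretch factor $\alpha$ but with the roles of $\lambda_+$ and $\lambda_-$ interchanged. The strategy has two steps: precompactness of the sequence $(\alpha^{-n}\varphi^{n}(\mu))_n$ in $\Curr(S)$, and identification of every subsequential limit with the same positive multiple of $\lambda_+$.

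For precompactness, I would take $\eta := \lambda_+ + \lambda_-$; this is a filling current because $(\lambda_+,\lambda_-)$ is a filling pair of measured laminations, by the definition of pseudo-Anosov. Using the eigen-equations $\varphi^{-n}(\lambda_+) = \alpha^{-n}\lambda_+$ and $\varphi^{-n}(\lambda_-) = \alpha^{n}\lambda_-$ together with $\Mod(S)$-invariance and bilinearity of the intersection form,
\begin{equation*}
i\bigl(\alpha^{-n}\varphi^{n}(\mu),\,\eta\bigr) \;=\; \alpha^{-2n}\, i(\mu,\lambda_+) \;+\; i(\mu,\lambda_-),
\end{equation*}
which is bounded in $n$. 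Property (\ref{step compact}) of the intersection form then confines the sequence, from some index onwards, to a compact subset of $\Curr(S)$, so it has accumulation points.

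To identify the accumulation points, let $\mu_\infty$ be one. Theorem \ref{NSD} gives $[\varphi^{n}(\mu)] \to [\lambda_+]$ in $\PCurr(S)$, hence $[\mu_\infty] = [\lambda_+]$, i.e.\ $\mu_\infty = c\,\lambda_+$ for some $c\geq 0$. The same scaling computation yields
\begin{equation*}
i\bigl(\alpha^{-n}\varphi^{n}(\mu),\,\lambda_-\bigr) \;=\; \alpha^{-n}\cdot i(\mu,\alpha^{n}\lambda_-) \;=\; i(\mu,\lambda_-),
\end{equation*}
which is \emph{independent of $n$}. Passing to the limit, continuity and bilinearity of $i(\cdot,\cdot)$ give $c\cdot i(\lambda_+,\lambda_-) = i(\mu,\lambda_-)$, so
\begin{equation*}
c \;=\; \frac{i(\mu,\lambda_-)}{i(\lambda_+,\lambda_-)}.
\end{equation*}
By Lemma \ref{ergodic}, $i(\lambda_+,\lambda_-) > 0$ (since $\lambda_-$ is not a nonnegative scalar multiple of $\lambda_+$) and $i(\mu,\lambda_-) > 0$ (since $[\mu]\neq[\lambda_-]$), so $c>0$. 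Because this value is the same for every accumulation point, the full sequence $(\alpha^{-n}\varphi^{n}(\mu))_n$ converges to $c_\mu\,\lambda_+$ with $c_\mu > 0$.

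I do not expect serious obstacles: everything reduces to the eigen-equations $\varphi(\lambda_\pm)=\alpha^{\pm 1}\lambda_\pm$, the continuity and bilinearity of Bonahon's intersection form, and the north-south dynamics Theorem \ref{NSD}. The one point that merits care is verifying that $\lambda_+ + \lambda_-$ is a filling current, which follows from the filling-pair property of the stable/unstable laminations of a pseudo-Anosov together with the definition of a filling current recalled in Section~\ref{intnumber}.
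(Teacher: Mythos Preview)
Your argument is correct and self-contained. The paper itself does not give a proof here---it only refers the reader to \cite[Theorem~3.4]{UyaNSD}---so there is no detailed comparison to make; your write-up fills in the details using exactly the tools already established in the paper (Lemma~\ref{ergodic}, Theorem~\ref{NSD}, and the compactness property~(\ref{step compact}) of the intersection form), and the explicit formula $c_\mu = i(\mu,\lambda_-)/i(\lambda_+,\lambda_-)$ is a nice bonus. One small point worth making explicit: when you pass from $[\varphi^{n}(\mu)]\to[\lambda_+]$ to $[\mu_\infty]=[\lambda_+]$ you are implicitly assuming $\mu_\infty\neq 0$, but this follows immediately from your conserved-quantity computation $i(\alpha^{-n}\varphi^{n}(\mu),\lambda_-)=i(\mu,\lambda_-)>0$, so the logic is sound.
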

%\ve{What is $c_{\mu}$ and $c_{\mu'}$? Should it be there exists constants such that... ?}\cu {It depends on the $\mu$.}

\begin{proof} The proof builds on the analogous result in the case of laminations, and nearly identical to the case where $S$ has boundary components, see the proof of \cite[Theorem 3.4]{UyaNSD}. 
\end{proof}

%\ve{This paragraph might have to change depending on where we put this section}
Recall, from section \ref{section:Liouville}, that given a hyperbolic metric $\rho$ on $S$ the hyperbolic length extends to a continuous length function $\ell$ on $\Curr(S)$ given by
$$\ell_{\rho}(\mu)=i(L_{\rho},\mu)$$
for all $\mu\in\Curr(S)$, where $L_{\rho}$ is the Liouville currents associated to $\rho$. As an application to this we get as a corollary to the north-south dynamics the following generalization, to \emph{all} curves, of a well known result about simple closed curves: 

%Let $\ell:\Curr(S)\to\RR_{\ge 0}$ denote the length function that extends the hyperbolic length of curves, existence of such a function is discussed in section \ref{length}.
 
%\ve{Also: we haven't defined hyperbolic length of currents (or Liouville currents) yet here. Should we just refer to the later section?}
%\ve{What do you think of the second statement in the corollary? Unnecessary? If we leave it I'll change the proof slightly since the first statement follow from the second}
%\cu{It is not entirely correct, everything eventually grows under both forward or backward iteration}

\begin{corollary}\label{futer} 
For any pseudo-Anosov homeomorphism $\varphi:S\to S$ of a closed, orientable hyperbolic surface $S$, there exists $M>0$ such that for any essential (not necessarily simple) closed curve $\gamma$ on $S$, either 
\[
\ell_{\rho} (\varphi^{k}\gamma)> \ell_{\rho}(\gamma)\ \text{  or  }\ \ell_{\rho} (\varphi^{-k}\gamma)> \ell_{\rho}(\gamma)
\]
for all $k\ge M$. %In fact, up to replacing $\varphi$ by its inverse, we have
%$$\lim_{k\to\infty}\frac{\ell(\varphi^k\gamma)}{\ell(\gamma)}=\infty\, \text{  and }\, \lim_{k\to\infty}\frac{\ell(\varphi^{-k}\gamma)}{\ell(\gamma)}=0$$
%$$\lim_{k\to\infty}\ell(\varphi^k\gamma)=\infty\, \text{  and }\, \lim_{k\to\infty}\ell(\varphi^{-k}\gamma)=0$$
%for every closed curve $\gamma$. 
\end{corollary}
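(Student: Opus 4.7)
The plan is to work in the compact projective space $\PCurr(S)$ and combine Theorem \ref{NSD} with continuity and positivity of the length ratio. For each $k \in \ZZ$, the map
\[
r_k\colon \PCurr(S) \to \RR_{>0}, \qquad r_k([\mu]) = \frac{\ell_\rho(\varphi^k\mu)}{\ell_\rho(\mu)} = \frac{i(L_\rho, \varphi^k\mu)}{i(L_\rho,\mu)},
\]
is well-defined by homogeneity, and continuous and strictly positive on $\PCurr(S)$ because $L_\rho$ is filling. At the two fixed points we have $r_{\pm k}([\lambda_{\pm}]) = \alpha^k$. The goal becomes: find a single $M$ such that for every $[\gamma] \in \PCurr(S)$ and every $k \geq M$, at least one of $r_k([\gamma]) > 1$ or $r_{-k}([\gamma]) > 1$ holds.

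Fix $\beta$ with $1 < \beta < \alpha$. By continuity, choose disjoint open neighborhoods $W_+ \ni [\lambda_+]$ and $W_- \ni [\lambda_-]$ on which $r_1 > \beta$ and $r_{-1} > \beta$ respectively. The next step is to upgrade these to \emph{forward-invariant} open neighborhoods. By Theorem \ref{NSD} applied to the compact set $\PCurr(S)\setminus W_-$, there is an integer $N$ with $\varphi^n([\mu]) \in W_+$ for all $n \geq N$ and all $[\mu] \notin W_-$. Define
\[
V_+ := \bigcap_{n=0}^{N-1} \varphi^{-n}(W_+).
\]
This is a \emph{finite} intersection of open sets, hence open; it contains $[\lambda_+]$ and lies in $W_+$. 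For $[\mu]\in V_+$ the orbit $\varphi^n([\mu])$ lies in $W_+$ for $0 \leq n \leq N-1$ by definition, and for $n \geq N$ by the Theorem \ref{NSD} estimate, so $\varphi(V_+) \subset V_+$. Construct $V_-$ symmetrically using $\varphi^{-1}$.

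If $[\gamma] \in V_+$, forward invariance gives $\varphi^j([\gamma]) \in V_+ \subset W_+$ for every $j \geq 0$, so telescoping yields
\[
r_k([\gamma]) = \prod_{j=0}^{k-1} r_1\bigl(\varphi^j([\gamma])\bigr) > \beta^k > 1 \quad \text{for all } k \geq 1.
\]
Symmetrically, $[\gamma] \in V_-$ gives $r_{-k}([\gamma]) > \beta^k > 1$ for all $k \geq 1$. On the remaining compact set $K_0 := \PCurr(S)\setminus(V_+\cup V_-)$, which avoids both fixed points, Theorem \ref{NSD} produces an integer $N_+$ with $\varphi^{N_+}(K_0) \subset V_+$. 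Forward invariance then gives $\varphi^n([\gamma]) \in V_+$ for every $n \geq N_+$ when $[\gamma] \in K_0$, hence
\[
r_{N_+ + j}([\gamma]) \;\geq\; r_{N_+}([\gamma])\,\beta^j \;\geq\; \epsilon\,\beta^j,
\]
where $\epsilon := \min_{[\mu]\in K_0} r_{N_+}([\mu]) > 0$ by continuity and compactness. Choosing $j_0$ with $\epsilon\beta^{j_0} > 1$ and setting $M := N_+ + j_0$ delivers a uniform bound that, together with the $V_\pm$ cases, proves the corollary.

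The main obstacle is the construction of the forward-invariant neighborhood $V_+$. A na\"ive definition as the intersection $\bigcap_{n \geq 0}\varphi^{-n}(W_+)$ of all iterated preimages yields only a $G_\delta$ set that need not be open. Uniform north-south dynamics is precisely what rescues the construction: outside $W_-$, all conditions $\varphi^n([\mu]) \in W_+$ for $n \geq N$ hold automatically, so only finitely many conditions remain nontrivial and the intersection is open. Once $V_+$ and $V_-$ are in hand, everything else reduces to standard continuity-and-compactness arguments on $\PCurr(S)$.
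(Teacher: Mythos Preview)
Your proof is correct and takes a genuinely different route from the paper's. The paper proves the stronger statement that
\[
\frac{\ell_\rho(\varphi^{k}\gamma)+\ell_\rho(\varphi^{-k}\gamma)}{\ell_\rho(\gamma)}\longrightarrow\infty
\]
uniformly over all curves $\gamma$, by pushing the action onto the Liouville current: it writes $\ell_\rho(\varphi^{\pm k}\gamma)=i(\gamma,\varphi^{\mp k}L_\rho)$, invokes Theorem~\ref{pointwise} to get $\alpha^{-k}\varphi^{\pm k}L_\rho\to c_\pm\lambda_\pm$, and then uses that $\{\lambda_+,\lambda_-\}$ is a filling pair together with compactness of the unit-length sphere in $\Curr(S)$. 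Your argument instead stays entirely on the level of the qualitative uniform north--south dynamics of Theorem~\ref{NSD}: you build forward-invariant open neighborhoods $V_\pm$ by the finite-intersection trick, telescope the ratio $r_k$ as a product of one-step ratios each exceeding $\beta$, and handle the complementary compact set $K_0$ with a single application of Theorem~\ref{NSD} plus a minimum of the continuous function $r_{N_+}$. The paper's approach is shorter once Theorem~\ref{pointwise} is in hand and yields the quantitative blow-up of the sum; your approach is more self-contained in that it never needs the rate-of-convergence statement of Theorem~\ref{pointwise}, only Theorem~\ref{NSD} itself, and the invariant-neighborhood construction is a nice reusable device. Both arguments ultimately rest on the same compactness of $\PCurr(S)$ and positivity of $\ell_\rho$ on nonzero currents.
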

%\ve{Also, I wanted to say $\lim_{k\to\infty}\sqrt[k]{\ell\left(\varphi^k\gamma\right)}=C\alpha$, as for simple curves, but maybe it doesn't follow using this method. I'm too tired to think now... }\cu{I think, this follows from theorem 3.5 by just looking at intersections with the Liouville current}
%Moreover, there exists nonnegative constants $C_1,C_2$ at least one of which is non-zero, such that 
%$$\lim_{k\to\infty}\sqrt[k]{\ell\left(\varphi^k\gamma\right)}=C_1\alpha\,\text{  and  }\,\lim_{k\to\infty}\sqrt[k]{\ell\left(\varphi^{-k}\gamma\right)}=C_2\alpha$$
%for all closed curves $\gamma$

\begin{proof}  We will show that there exists $M>0$ such that for all $\gamma$ on $S$, the following holds:
\[
\dfrac{\ell (\varphi^{k}\gamma)+\ell (\varphi^{-k}\gamma)}{\ell(\gamma)}>2
\]
for all $k\geq M$. 
Let $L_{\rho}$ be the Liouville current associated to the hyperbolic metric $\rho$. Note that, for all $k$, 
\[
\ell_{\rho}(\varphi^{k}\gamma)=i(\varphi^{k}\gamma,L_{\rho})=i(\gamma,\varphi^{-k}L_{\rho})
\]
and 
\[
\ell_{\rho}(\varphi^{-k}\gamma)=i(\varphi^{-k}\gamma,L_{\rho})=i(\gamma,\varphi^{k}L_{\rho}). 
\]
Hence it suffices to prove that there exist $M>0$ such that 
\begin{equation}\label{>2}
\dfrac{i(\gamma,\varphi^{k}L_{\rho})+i(\gamma,\varphi^{-k}L_{\rho})}{\ell_{\rho}(\gamma)}>2
\end{equation}
for all $\gamma$ and for all $k\geq M$.

%Assume to the contrary that this equality does not hold, i.e. for every $M$, there exist $k\geq M$ and a curve $\gamma_k$ such that 
%\[
%\dfrac{i(\gamma_k,\varphi^{k}L_{\rho})+i(\gamma_k,\varphi^{-k}L_{\rho})}{\ell(\gamma_k)}\le2. 
%\]

Let $\alpha>1$ be the stretch factor for $\varphi$ and $\lambda_+$ and $\lambda_-$ its stable and unstable laminations, respectively. Using the properties of the intersection form we have
\begin{align*}
\dfrac{i(\gamma_k,\varphi^{k}L_{\rho})+i(\gamma_k,\varphi^{-k}L_{\rho})}{\ell_{\rho}(\gamma_k)}&=
i\left(\dfrac{\gamma_k}{\ell_{\rho}(\gamma_k)}, \varphi^{k}L_{\rho}\right)+i\left(\dfrac{\gamma_k}{\ell_{\rho}(\gamma_k)}, \varphi^{-k}L_{\rho}\right)\\&= \alpha^{k}i\left(\dfrac{\gamma_k}{\ell_{\rho}(\gamma_k)}, \alpha^{-k} \varphi^{k}L_{\rho}\right)+\alpha^{k}i\left(\dfrac{\gamma_k}{\ell_{\rho}(\gamma_k)}, \alpha^{-k}\varphi^{-k}L_{\rho}\right)
\end{align*}

Since the length of $\dfrac{\gamma_k}{\ell_{\rho}(\gamma_k)}$ is $1$ for all $k$, they lie in a compact set and hence there exist $\mu\in\Curr(S)$ such that (up to passing to a subsequence) 
\[
\lim_{k\to\infty}\frac{\gamma_k}{\ell(\gamma_k)}=\mu.
\]

On the other hand, Theorem \ref{pointwise} implies that, for some $c_0, c_1>0$ 
\[
\lim_{k\to\infty}\alpha^{-k}\varphi^{k}L_{\rho}=c_0 \lambda_{+}
\ \  \text{and}\ \ 
\lim_{k\to\infty}\alpha^{-k}\varphi^{-k}L_{\rho}=c_1 \lambda_{-}. 
\]

Therefore, as $k\to\infty$, 
\[
i\left(\dfrac{\gamma_k}{\ell_{\rho}(\gamma_k)}, \alpha^{-k} \varphi^{k}L_{\rho}\right)\to c_0i(\mu,\lambda_{+})\ \ \text{and}\ \ 
i\left(\dfrac{\gamma_k}{\ell_{\rho}(\gamma_k)}, \alpha^{-k} \varphi^{-k}L_{\rho}\right)\to c_1i(\mu,\lambda_{-}). 
\]

Since $\{\lambda_{+},\lambda_{-}\}$ is a filling pair of currents, at least one of the quantities $i(\mu,\lambda_{+})$ or $i(\mu,\lambda_{-})$ must be positive. Hence, since $\alpha^{k}\to\infty$ as $k\to\infty$, 
we have
\[
\dfrac{i(\gamma_k,\varphi^{k}L_{\rho})+i(\gamma_k,\varphi^{-k}L_{\rho})}{\ell_{\rho}(\gamma_k)}\to\infty.
\]

In particular, there exists $M>0$ such that \eqref{>2} holds for all $k\geq M$. 

%For the second statement, note that 
%$$\lim_{k\to\infty}\frac{\ell(\varphi^k\gamma)}{\ell(\gamma)}\to\infty$$
%if $i(\mu,\lambda_{+})\neq0$
%and
%$$\lim_{k\to\infty}\frac{\ell(\varphi^{-k}\gamma)}{\ell(\gamma)}\to\infty$$
%if $i(\mu,\lambda_{-})\neq0$,
%at least one of which must be true. 

\end{proof}

%We will give a brief outline of this proof. Choose $U_{+}$ and $U_{-}$ open neighborhoods of $[\lambda_{+}]$ and $[\lambda_{-}]$ respectively such that for any $[\mu]\in U_{+}$, $\ell(\varphi^{n}(\mu))\ge 2\ell(\mu)$ and for any $[\mu]\in U_{-}$, $\ell(\varphi^{-n}(\mu))\ge 2\ell(\mu)$ for all $n$ sufficiently large.

%%%%%%%%%%%%%%%%%%%%%%%%%%%%%%%%%%%%%%%%%%%%%%%%%

\bibliography{bib}

\begin{thebibliography}{ABEM12}

\bibitem[ABEM12]{ABEM}
Jayadev Athreya, Alexander Bufetov, Alex Eskin, and Maryam Mirzakhani.
\newblock Lattice point asymptotics and volume growth on {T}eichm\"uller space.
\newblock {\em Duke Math. J.}, 161(6):1055--1111, 2012.

\bibitem[AL]{AL}
J~Aramayona and C~Leininger.
\newblock Hyperbolic structures on surfaces and geodesic currents, to appear in
  algorithms and geometric topics around automorphisms of free groups, advanced
  courses crm.

\bibitem[Bil99]{Bil}
Patrick Billingsley.
\newblock {\em Convergence of probability measures}.
\newblock Wiley Series in Probability and Statistics: Probability and
  Statistics. John Wiley \& Sons, Inc., New York, second edition, 1999.
\newblock A Wiley-Interscience Publication.

\bibitem[BL18]{BL}
Anja Bankovic and Christopher~J. Leininger.
\newblock Marked-length-spectral rigidity for flat metrics.
\newblock {\em Trans. Amer. Math. Soc.}, 370(3):1867--1884, 2018.

\bibitem[Bon86]{Bo86}
Francis Bonahon.
\newblock Bouts des vari\'et\'es hyperboliques de dimension {$3$}.
\newblock {\em Ann. of Math. (2)}, 124(1):71--158, 1986.

\bibitem[Bon88]{Bo88}
Francis Bonahon.
\newblock The geometry of {T}eichm\"uller space via geodesic currents.
\newblock {\em Invent. Math.}, 92(1):139--162, 1988.

\bibitem[Bon91]{Bo91}
Francis Bonahon.
\newblock Geodesic currents on negatively curved groups.
\newblock In {\em Arboreal group theory ({B}erkeley, {CA}, 1988)}, volume~19 of
  {\em Math. Sci. Res. Inst. Publ.}, pages 143--168. Springer, New York, 1991.

\bibitem[CB88]{CB}
Andrew~J. Casson and Steven~A. Bleiler.
\newblock {\em Automorphisms of surfaces after {N}ielsen and {T}hurston},
  volume~9 of {\em London Mathematical Society Student Texts}.
\newblock Cambridge University Press, Cambridge, 1988.

\bibitem[Con18]{Constantine}
David Constantine.
\newblock {M}arked length spectrum rigidity in nonpositive curvature with
  singularities.
\newblock {\em to appear in Indiana University Mathematics Journal}, 2018.

\bibitem[DLR10]{DLR}
Moon Duchin, Christopher~J. Leininger, and Kasra Rafi.
\newblock Length spectra and degeneration of flat metrics.
\newblock {\em Invent. Math.}, 182(2):231--277, 2010.

\bibitem[EPS16]{EPS}
Viveka Erlandsson, Hugo Parlier, and Juan Souto.
\newblock Counting curves, and the stable length of currents.
\newblock {\em to appear in JEMS,
  \href{https://arxiv.org/abs/1612.05980}{arxiv:1612.05980}}, 2016.

\bibitem[Erl16]{viv}
Viveka Erlandsson.
\newblock A remark on the word length in surface groups.
\newblock {\em to appear TAMS,
  \href{https://arxiv.org/abs/1608.07436}{arXiv:1608.07436}}, 2016.

\bibitem[ES16]{ES}
Viveka Erlandsson and Juan Souto.
\newblock Counting curves in hyperbolic surfaces.
\newblock {\em Geom. Funct. Anal.}, 26(3):729--777, 2016.

\bibitem[FLP12]{FLP}
Albert Fathi, Fran\c{c}ois Laudenbach, and Valentin Po\'enaru.
\newblock {\em Thurston's work on surfaces}, volume~48 of {\em Mathematical
  Notes}.
\newblock Princeton University Press, Princeton, NJ, 2012.
\newblock Translated from the 1979 French original by Djun M. Kim and Dan
  Margalit.

\bibitem[Gen17]{Gendulphe}
Matthieu Gendulphe.
\newblock {W}hat's wrong with the growth of simple closed geodesics on
  nonorientable hyperbolic surfaces.
\newblock {\em \href{https://arxiv.org/abs/1706.08798}{arXiv:1706.08798}},
  2017.

\bibitem[Glo17]{Glorious}
Olivier Glorieux.
\newblock {C}ritical exponent for geodesic currents.
\newblock {\em \href{https://arxiv.org/abs/1704.06541}{arXiv:1704.06541}},
  2017.

\bibitem[HP97]{HerPa}
Sa'ar Hersonsky and Fr\'ed\'eric Paulin.
\newblock On the rigidity of discrete isometry groups of negatively curved
  spaces.
\newblock {\em Comment. Math. Helv.}, 72(3):349--388, 1997.

\bibitem[Iva92]{Iva}
Nikolai~V. Ivanov.
\newblock {\em Subgroups of {T}eichm\"uller modular groups}, volume 115 of {\em
  Translations of Mathematical Monographs}.
\newblock American Mathematical Society, Providence, RI, 1992.
\newblock Translated from the Russian by E. J. F. Primrose and revised by the
  author.

\bibitem[KB02]{col:KB02}
Ilya Kapovich and Nadia Benakli.
\newblock Boundaries of hyperbolic groups.
\newblock In {\em Combinatorial and geometric group theory ({N}ew {Y}ork,
  2000/{H}oboken, {NJ}, 2001)}, volume 296 of {\em Contemp. Math.}, pages
  39--93. Amer. Math. Soc., Providence, RI, 2002.

\bibitem[KN13]{KN}
Ilya Kapovich and Tatiana Nagnibeda.
\newblock Subset currents on free groups.
\newblock {\em Geometriae Dedicata}, 166(1):307--348, 2013.

\bibitem[LM08]{LM}
Elon Lindenstrauss and Maryam Mirzakhani.
\newblock Ergodic theory of the space of measured laminations.
\newblock {\em Int. Math. Res. Not. IMRN}, (4):Art. ID rnm126, 49, 2008.

\bibitem[Mag17]{Magee}
Michael Magee.
\newblock {C}ounting one sided simple closed geodesics on fuchsian thrice
  punctured projective planes.
\newblock {\em \href{https://arxiv.org/abs/1705.09377}{arXiv:1705.09377}},
  2017.

\bibitem[Mas85]{MasurPAMS}
Howard Masur.
\newblock Ergodic actions of the mapping class group.
\newblock {\em Proceedings of the American Mathematical Society},
  94(3):455--459, 1985.

\bibitem[Mir08]{Mir1}
Maryam Mirzakhani.
\newblock Growth of the number of simple closed geodesics on hyperbolic
  surfaces.
\newblock {\em Ann. of Math. (2)}, 168(1):97--125, 2008.

\bibitem[Mir16]{Mir2}
Maryam Mirzakhani.
\newblock Counting mapping class group orbits on hyperbolic surfaces.
\newblock {\em \href{https://arxiv.org/abs/1601.03342}{arXiv:1601.03342}},
  2016.

\bibitem[MZ16]{MZ}
Giuseppe Martone and Tengren Zhang.
\newblock Positively ratioed representations.
\newblock {\em arXiv:1609.01245}, 2016.

\bibitem[Ota90]{Otal}
Jean-Pierre Otal.
\newblock Le spectre marqu\'e des longueurs des surfaces \`a courbure
  n\'egative.
\newblock {\em Ann. of Math. (2)}, 131(1):151--162, 1990.

\bibitem[PH92]{PH}
Robert~C Penner and John~L Harer.
\newblock Combinatorics of train tracks.
\newblock {\em Annals of Mathematics Studies}, (125):3--114, 1992.

\bibitem[RS17]{RS}
Kasra Rafi and Juan Souto.
\newblock Geodesics currents and counting problems.
\newblock {\em \href{https://arxiv.org/abs/1709.06834}{arXiv:1709.06834}},
  2017.

\bibitem[Sas17]{SaS}
Dounnu Sasaki.
\newblock Subset currents on surfaces.
\newblock {\em arXiv:1703.05739}, 2017.

\bibitem[Thu80]{ThurstonNotes}
William~P. Thurston.
\newblock {T}he geometry and topology of 3-manifolds.
\newblock {\em Unpublished notes}, 1980.

\bibitem[Thu88]{Th}
William~P. Thurston.
\newblock On the geometry and dynamics of diffeomorphisms of surfaces.
\newblock {\em Bull. Amer. Math. Soc. (N.S.)}, 19(2):417--431, 1988.

\bibitem[Uya15]{UyaNSD}
Caglar Uyanik.
\newblock Generalized north-south dynamics on the space of geodesic currents.
\newblock {\em Geom. Dedicata}, 177:129--148, 2015.

\end{thebibliography}
\bibliographystyle{alpha}

\end{document}